\mathchardef\varSigma="0106
\def\bbK{\mathbf{K}}
\def\bbR{\mathbf{R}}
\def\bbC{\mathbf{C}}
\def\rto{\mathbf{R}\hskip-.5pt^2}
\def\hyp{\hskip.5pt\vbox
{\hbox{\vrule width3ptheight0.5ptdepth0pt}\vskip2.2pt}\hskip.5pt}
\def\tb{T\hskip-.3pt\bs}
\def\tab{{T\hskip.2pt^*\hskip-2.3pt\bs}}
\def\tayb{{T_{\nnh y}^*\hskip-1.9pt\bs}}
\def\tyb{{T\hskip-2pt_y\hskip-.2pt\bs}}
\def\line{\Lambda}
\def\plane{\Pi}
\def\ctr{\varOmega}
\def\ust{U\hskip-2pt_*}
\def\bl{\mathcal{L}}
\def\bv{\mathcal{V}}
\def\bp{\mathcal{P}}
\def\mpp{\hbox{$-$\hskip1pt$+$\hskip1pt$+$}}
\def\mmpp{\hbox{$-$\hskip1pt$-$\hskip1pt$+$\hskip1pt$+$}}
\def\x{\mathfrak{h}}
\def\T{\varTheta}
\def\t{\theta}
\def\bs{\varSigma}
\def\hs{\hskip.7pt}
\def\hh{\hskip.4pt}
\def\hn{\hskip-.35pt}
\def\nh{\hskip-.7pt}
\def\nnh{\hskip-1pt}
\def\vg{\varGamma}
\def\vs{vector space}
\def\vf{vector field}
\def\psr{pseu\-\hbox{do\hs-}Riem\-ann\-i\-an}
\def\inv{-in\-var\-i\-ant}
\def\diml{-di\-men\-sion\-al}
\def\om{\alpha}%{\hh\omega\hh}
\def\zy{\hh\omega}
\def\sym{\sigma}
\def\mf{manifold}
\def\mfd{-man\-i\-fold}
\def\ctb{cotangent bundle}
\def\prc{pseu\-\hbox{do\hs-}Riem\-ann\-i\-an metric}
\def\kerd{\text{\rm Ker}\hskip2.7ptd}
\def\ts{total space}
\def\pmb{\pi}
 \newtheorem{thm}{Theorem}[section]
 \newtheorem{cor}[thm]{Corollary}
 \newtheorem{lem}[thm]{Lemma}
 \theoremstyle{definition}
 \theoremstyle{remark}
 \newtheorem{rem}[thm]{Remark}
 \newtheorem{ex}[thm]{Example}
 \numberwithin{equation}{section}
\begin{document} 

\title[Connections with skew-sym\-met\-ric Ric\-ci tensor]{Connections with 
skew-sym\-met\-ric Ric\-ci tensor on surfaces}

\author[A. Derdzinski]{Andrzej Derdzinski} 
\address{Department of Mathematics\\
The Ohio State University\\
Columbus, OH 43210\\
USA}

\email{andrzej@math.ohio-state.edu} 
%----------classification, keywords, date
\subjclass{53B05; 53C05, 53B30}

\keywords{Skew-sym\-met\-ric Ric\-ci tensor, locally homogeneous connection, 
left-in\-var\-i\-ant connection on a Lie group, 
cur\-va\-ture\hskip.5pt-ho\-mo\-ge\-ne\-i\-ty, neutral Einstein metric, 
Jor\-dan-Osser\-man metric, Pe\-trov type III}

%\date{January 31, 2008}
%----------additions
%
\dedicatory{\ \ \ }
%%% ----------------------------------------------------------------------

%\urladdr{http://www.math.ohio-state.edu/\~{}andrzej}

\begin{abstract}
Some known results on torsionfree connections with 
skew-symmet\-ric Ricci tensor on surfaces are extended to connections with 
torsion, and Wong's canonical coordinate form of such connections is 
simplified.
\end{abstract} 

%%% ----------------------------------------------------------------------
\maketitle
%%% ----------------------------------------------------------------------
%\tableofcontents

%\voffset=-17pt\hoffset=0pt % for arxiv

\section{Introduction}\label{intro}
This paper generalizes some results concerning the situation where
\begin{equation}\label{srt}
\begin{array}{l}
\nabla\,\mathrm{\ is\ a\ connection\ on\ a\ surface\ }\,\bs,
\mathrm{\hskip5ptand\ the\ Ric\-ci}\\
\mathrm{tensor\ }\,\rho\,\mathrm{\ of\ }\,\nabla\hn\,\mathrm{\ is\ 
skew}\hyp\mathrm{sym\-met\-ric\ at\ every\ point.}
\end{array}
\end{equation}
What is known about condition (\ref{srt}) can be summarized as follows. 
Nor\-den \cite{norden-48}, \cite[\S89]{norden-50} showed that, for a 
tor\-sion\-free connection $\,\nabla\hn\,$ on a surface, skew-sym\-me\-try 
of the Ric\-ci tensor is equivalent to flatness of the connection 
obtained by projectivizing $\,\nabla\nh$, and implies the existence of a 
frac\-tion\-al-lin\-e\-ar first integral for the geodesic equation. Wong 
\cite[Theorem~4.2]{wong} found three coordinate expressions which, locally, 
represent all tor\-sion\-free connections $\,\nabla\hn\,$ with (\ref{srt}) 
such that $\,\rho\ne0$ everywhere in $\,\bs$. Ko\-wal\-ski, O\-poz\-da 
and Vl\'a\-\v sek \cite{kowalski-opozda-vlasek-00} used an approach different 
from Wong's to classify, locally, all tor\-sion\-free connections 
$\,\nabla\hn\,$ satisfying (\ref{srt}) that are also locally homogeneous, 
while in \cite{kowalski-opozda-vlasek-99} they proved that, for 
real-an\-a\-lyt\-ic tor\-sion\-free connections $\,\nabla\hn\,$ with 
(\ref{srt}), third-or\-der cur\-va\-ture\hskip.5pt-ho\-mo\-ge\-ne\-i\-ty 
implies local homogeneity (but one cannot replace the word `third' with 
`second'). Bla\-\v zi\'c and Bo\-kan \cite{blazic-bokan} showed that the torus 
$\,T\hh^2$ is the only closed surface $\,\bs\,$ admitting both a 
tor\-sion\-free connection $\,\nabla\hn\,$ with (\ref{srt}) and a 
$\,\nabla\nh$-par\-al\-lel almost complex structure. Garc\'\i a\hh-R\'\i o, 
Kupeli, V\'azquez\hh-Abal \hbox{and\ V\'azquez\hh-}Lorenzo 
\cite{garcia-rio-kupeli-vazquez-abal-vazquez-lorenzo} 
proved that connections $\,\nabla\hn\,$ as in (\ref{srt}) are equivalently 
characterized both by being the so-call\-ed {\it af\-fine Osser\-man 
connections\/} on surfaces 
\cite[Theorem~4]{garcia-rio-kupeli-vazquez-abal-vazquez-lorenzo}, and, in the 
tor\-sion\-free case, by the four\diml\ Osser\-man property of the Riemann 
extension metric $\,g\nnh^\nabla$ 
\cite[Theorem~4]{garcia-rio-kupeli-vazquez-abal-vazquez-lorenzo}. They also 
showed that, if such $\,\nabla\hn\,$ is tor\-sion\-free and $\,\rho\ne0$ 
everywhere, then $\,g\nnh^\nabla$ is a 
cur\-va\-ture\hskip.5pt-ho\-mo\-ge\-ne\-ous self-du\-al Ric\-ci-flat Walker 
metric of Pe\-trov type III with the metric signature $\,(\mmpp)\,$ on the 
four\mfd\ $\,\tab\,$ 
\cite[Theorem~9]{garcia-rio-kupeli-vazquez-abal-vazquez-lorenzo}, cf.\ 
\cite[Remark 2.1]{diaz-ramos-garcia-rio-vazquez-lorenzo}. An\-der\-son and 
Thomp\-son \cite[pp.\ 104--107]{anderson-thompson} proved that, among 
tor\-sion\-free connections $\,\nabla\hn\,$ on surfaces, those with 
(\ref{srt}) are characterized by the existence, locally in $\,\tb$, of a 
frac\-tion\-al-lin\-e\-ar La\-grang\-i\-an for which the geodesics of 
$\,\nabla\hs$ are the solutions of the Eu\-ler-La\-grange equations. Bo\-kan, 
Matzeu and Raki\'c \cite{bokan} -- \cite{bokan-matzeu-rakic-06} studied 
connections with skew-sym\-met\-ric Ric\-ci tensor on 
high\-er-di\-men\-sion\-al manifolds.

The results of this paper begin with Section~\ref{tccs}, where we obtain the 
conclusion of Bla\-\v zi\'c and Bo\-kan \cite{blazic-bokan} without assuming 
the existence of a $\,\nabla\nh$-par\-al\-lel almost complex structure, while 
allowing $\,\nabla\hn\,$ to have torsion. 

Next, in Section~\ref{wong}, we strengthen Wong's theorem 
\cite[Theorem~4.2]{wong} by reducing the three cases to just one, and removing 
the assumption that $\,\rho\ne0$.

In Sections~\ref{lclg} --~\ref{glhc} we extend the theorem of Ko\-wal\-ski, 
O\-poz\-da and Vl\'a\-\v sek \cite{kowalski-opozda-vlasek-00} to connections with 
torsion, proving that a locally homogeneous connection on a surface having 
skew-sym\-met\-ric Ric\-ci tensor must be locally equivalent to a 
left-in\-var\-i\-ant connection on a Lie group (Theorem~\ref{genrl}). Note 
that the last conclusion is also true for all locally homogeneous 
tor\-sion\-free connections on surfaces except the Le\-vi-Ci\-vi\-ta 
connection of the standard sphere, as one easily verifies using O\-poz\-da's 
classification of such connections \cite[Theorem 1.1]{opozda}. See 
Remark~\ref{lhlie}.

Sections~\ref{nthm} and~\ref{lagr} generalize some of Nor\-den's results 
\cite{norden-50} to connections with torsion. In Section~\ref{lagr} we also 
give a proof of An\-der\-son and Thomp\-son's theorem 
\cite{anderson-thompson} based on the Ham\-il\-ton\-i\-an formalism.

Finally, Section~\ref{drgr} describes a class of examples of Ric\-ci-flat 
Walker four\mfd s which includes those constructed by Garc\'\i a\hh-R\'\i o, 
Kupeli, V\'azquez\hh-Abal and V\'azquez\hh-Lorenzo 
\cite[Theorem~9]{garcia-rio-kupeli-vazquez-abal-vazquez-lorenzo}. The 
generalization arises by the use of a more general type of Riemann extensions. 
However, our examples are not new: they first appeared, in a different form, 
in Theorem~3.1(ii.3) of 
D\'\i az\hh-Ramos, \hbox{Garc\'\i a\hh}\-R\'\i o and V\'azquez\hh-Lorenzo's 
paper 
\cite{diaz-ramos-garcia-rio-vazquez-lorenzo}. In addition, as we point out in 
Section~\ref{drgr}, Theorem~3.1(ii.3) of 
\cite{diaz-ramos-garcia-rio-vazquez-lorenzo} states in coordinate language 
that locally, up to isometries, this larger class of examples consists 
precisely of all cur\-va\-ture\hskip.5pt-ho\-mo\-ge\-ne\-ous self-du\-al 
Ric\-ci-flat Walker $\,(\mmpp)\,$ metrics of Pe\-trov type III.

\section{Preliminaries}\label{prel}
By a `manifold' we always mean a connected manifold. All manifolds, bundles, 
their sections and sub\-bun\-dles, connections and mappings, including bundle 
morphisms, are assumed to be $\,C^\infty\nnh$-dif\-fer\-en\-ti\-a\-ble, while 
a bundle morphism, by definition, operates between two bundles with the same 
base manifold, and acts by identity on the base. For the exterior product of 
$\,1$-forms $\,\xi,\eta\,$ and a $\,2$-form $\,\om\,$ on a manifold, the 
exterior derivative of $\,\xi$, and any tangent vector fields $\,u,v,w$, we 
have
\begin{equation}\label{dxi}
\begin{array}{rl}
\mathrm{a)}\hskip7pt&
(\xi\wedge\eta)(\nh u,v)\,=\,\xi(u)\eta(v)\,-\,\eta(u)\hs\xi(v)\hs,\\
\mathrm{b)}\hskip7pt&
(\xi\wedge\om)(\nh u,v,w)\,=\,\xi(u)\hs\om(v,w)\,+\,\xi(v)\hs\om(w,u)
+\,\xi(w)\hs\om(\nh u,v)\hs,\\
\mathrm{c)}\hskip7pt&
(d\hs\xi)(\nh u,v)\,=\,d_u[\hs\xi(v)]\,-\,d_v[\hs\xi(u)]\,-\,\xi([u,v])\hs.
\end{array}
\end{equation}
Our sign convention about the curvature tensor $\,R=R^\nabla$ of a connection 
$\,\nabla\hn\,$ in a real or complex vector bundle $\,\bv\,$ over a manifold 
$\,\bs\,$ is
\begin{equation}\label{cur}
R\hh(\nh u,v)\hs\psi\hskip7pt=\hskip7pt\nabla_{\hskip-2.2ptv}\nabla_{\hskip-2.2ptu}\psi\,
-\,\nabla_{\hskip-2.2ptu}\nabla_{\hskip-2.2ptv}\psi\,+\,\nabla_{\nh[u,v]}\psi
\end{equation}
for sections $\,\psi\,$ of $\,\bv\,$ and \vf s $\,u,v\,$ tangent to $\,\bs$. 
We then denote by
\begin{equation}\label{ope}
\mathrm{i)}\hskip9ptR\hh(\nh u,v):\bv\to\bv\hs,\hskip29pt\mathrm{ii)}\hskip9pt
\ctr(\nh u,v)=\mathrm{tr}\hskip2pt[R\hh(\nh u,v)]:\bs\to\bbK\hs,
\end{equation}
the bundle morphism sending any $\,\psi\,$ to $\,R\hh(\nh u,v)\hs\psi$, and, 
respectively, its pointwise trace, $\,\bbK\,$ being the scalar field ($\bbR\,$ 
or $\,\bbC$). Thus, $\,\ctr\,$ is a $\,\bbK$-val\-ued $\,2$-form on $\,\bs$. 
If, in addition, $\,\bv\,$ is a line bundle (of fibre dimension $\,1$), then 
$\,R\hh(\nh u,v)=\ctr(\nh u,v)$, that is, the morphism $\,R\hh(\nh u,v)\,$ 
acts via multiplication by the $\,\bbK$-val\-ued function $\,\ctr(\nh u,v)$, 
and we call $\,\ctr\,$ the {\it curvature form\/} of $\,\nabla$.

The torsion tensor $\,\hh\T\nnh\,$ of a connection $\,\nabla\hn\,$ on a 
manifold $\,\bs\,$ is characterized by 
$\,\T(v,w)=\nabla_{\hskip-2.2ptv}w-\nabla_{\hskip-2.7ptw}v-[v,w]$, for 
vector fields $\,v,w\,$ tangent to $\,\bs$. If $\,\bs\,$ is a surface, 
$\,\T\,$ is completely determined by the {\it torsion form\/} $\,\t$, which is 
the $\,1$-form with $\,\t(v)=\hh\mathrm{tr}\hskip2.7pt\T(v,\,\cdot\,)$. In 
fact, $\,\T=\t\wedge\mathrm{Id}\hh$, that is, 
$\,\T(v,w)=\t(v)\hh w-\t(w)\hh v$.

For a connection $\,\nabla\hn\,$ on a surface, its torsion form $\,\t$, and any 
$\,1$-form $\,\xi$,
\begin{equation}\label{dxt}
d\hs\xi\,\,=\,\hs\nabla\xi\,\,-\,\hs(\nabla\xi)^*\hs+\,\,\t\wedge\hs\xi
\end{equation}
in the sense that $\,(d\hs\xi)(\nh u,v)=(\nabla_{\hskip-2.2ptu\hs}\xi)(v)
-(\nabla_{\hskip-2.2ptv\hs}\xi)(u)+\t(u)\hs\xi(v)-\t(v)\hs\xi(u)\,$ 
whenever $\,u,v\,$ are tangent vector fields. This is clear from (\ref{dxi}) 
and the last paragraph.
\begin{rem}\label{dtbdl}The {\it determinant bundle\/} of a real\hs/com\-plex 
vector bundle $\,\bv\,$ of fibre dimension $\,m\,$ is its highest 
real\hs/com\-plex exterior power $\,\det\hskip1pt\bv=\bv^{\wedge m}\nnh$. For 
any connection $\,\nabla\hn\,$ in $\,\bv$, the $\,2$-form $\,\ctr\,$ defined by 
(\ref{ope}.ii) is the curvature form of the connection in the line bundle 
$\,\det\hskip1pt\bv\,$ induced by $\,\nabla$.
\end{rem}
\begin{rem}\label{cohom}In view of the Bianchi identity and 
Remark~\ref{dtbdl}, the form $\,\ctr\,$ in (\ref{ope}.ii) is always closed. 
Its cohomology class $\,[\hh\ctr\hs]\in H^2(\bs,\bbK)\,$ does not depend on 
the choice of the connection $\,\nabla$. (This is again immediate from 
Remark~\ref{dtbdl}: two connections in the line bundle $\,\det\hskip1pt\bv\,$ 
differ by a $\,\bbK$-val\-ued $\,1$-form $\,\xi\,$ on $\,\bs$, and so, by 
(\ref{cur}) and (\ref{dxi}.c), their curvature forms differ by $\,-\hs d\xi$.) 
Specifically, $\,[\hh\ctr\hs]$ equals $\,2\pi\,$ times $\,c_1(\bv)\,$ when 
$\,\bbK=\bbC\hs$, cf.\ \cite[Vol.\hskip1.9ptII, p.\ 311]{kobayashi-nomizu}. On 
the other hand, $\,[\hh\ctr\hs]=0\,$ in $\,H^2(\bs,\bbK)\,$ when 
$\,\bbK=\bbR\hs$, since, choosing a connection $\,\nabla\hn\,$ compatible with a 
Riemannian fibre metric in $\,\bv$, we get $\,\ctr=0$.
\end{rem}

\section{Projectively flat connections}\label{pfco}
Let $\,\nabla\hn\,$ be a connection in a real\hs/com\-plex vector bundle 
$\,\bv\,$ over a real manifold $\,\bs$. Following Li, Yau and Zheng 
\cite{li-yau-zheng}, we call $\,\nabla\hn\,$ {\it projectively flat\/} if its 
curvature tensor $\,R\,$ equals $\,\rho\hskip1.pt\otimes\text{\rm Id}\,$ for 
some $\,2$-form $\,\rho\,$ on $\,\bs$, in the sense that 
$\,R\hh(\nh u,v)\psi=\rho\hs(\nh u,v)\hh\psi$ for all sections $\,\psi\,$ of 
$\,\bv\,$ and vector fields $\,u,v\,$ tangent to $\,M$. See also 
Section~\ref{nthm}.

This meaning of projective flatness is quite different from what the term 
traditionally refers to in the case of connections in the tangent bundle 
\cite[p.\ 915]{simon-00}.
\begin{rem}\label{prjfl}For a projectively flat connection, a $\,2$-form 
$\,\rho\,$ with $\,R=\rho\hh\otimes\text{\rm Id}\,$ is a constant 
multiple of the form $\,\ctr\,$ given by (\ref{ope}.ii). Thus, $\,\rho\,$ is 
closed, and, in the case of a real vector bundle, $\,[\hh\rho\hs]=0\,$ in 
$\,H^2(\bs,\bbR)\,$ according to Remark~\ref{cohom}.
\end{rem}
In the following lemma, the notation 
$\,\mathrm{D}\hs=\hs\nabla\hs+\,\hh\xi\otimes\mathrm{Id}\,$ means that 
$\,\mathrm{D}_v\psi=\nabla_{\hskip-2.2ptv}\psi+\hh\xi(v)\hs\psi\,$ whenever 
$\,v\,$ is a vector field tangent to $\,\bs\,$ and $\,\psi\,$ is a section 
of $\,\bv$. By (\ref{cur}) and (\ref{dxi}.c), the curvature tensors of such 
connections are related by
\begin{equation}\label{rdr}
R\hs^{\mathrm{D}}\nh(\nh u,v)\hs\psi\,\,
=\,\,R^\nabla\hn(\nh u,v)\hs\psi\,\,-\,\,[\hs(d\hs\xi)(\nh u,v)]\hs\psi\hs.
\end{equation}
\begin{lem}\label{prffl}For a real\hs/\nh com\-plex vector bundle\/ $\,\bv\,$ 
over a manifold, the assignment
\begin{equation}\label{dtn}
(\nabla,\hs\xi)\,\mapsto\,(\mathrm{D}\hh,\hs\xi)\hs,\hskip9pt
\mathrm{where}\hskip6pt\mathrm{D}\hs=\hs\nabla\hs+\,\hh\xi\otimes\mathrm{Id}
\end{equation}
defines a bijective correspondence between the set of all pairs\/ 
$\,(\nabla,\hs\xi)\,$ in which\/ $\,\nabla$ is a projectively flat 
connection in\/ $\,\bv$ and\/ $\,\xi\nh\,$ is a $\,1$-form on the base 
manifold such that\/ the curvature tensor of\/ $\,\nabla\nh,$ equals 
$\,\rho\hh\otimes\text{\rm Id}\,$ for $\,\rho=d\hs\xi$, and the set of all 
pairs\/ $\,(\mathrm{D}\hh,\hs\xi)\,$ consisting of a flat connection\/ 
$\,\mathrm{D}\nh\,$ in\/ $\,\bv\nh\,$ and a $\,1$-form\/ $\,\xi\nh\,$ on the 
base.
\end{lem}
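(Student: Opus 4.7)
The plan is to show that the assignment in (\ref{dtn}) is well-defined on the stated domain, has range in the stated target, and admits an explicit inverse, all of which follow almost directly from formula (\ref{rdr}).

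First I would check that the map lands where claimed. Given a pair $\,(\nabla,\xi)\,$ with $\,\nabla\,$ projectively flat and $\,R^\nabla=d\xi\otimes\text{\rm Id}\,$, I would substitute into (\ref{rdr}) to get
\[
R^{\mathrm{D}}\nh(u,v)\hs\psi\,\,=\,\,(d\xi)(u,v)\hs\psi\,\,-\,\,(d\xi)(u,v)\hs\psi\,\,=\,\,0,
\]
so that $\,\mathrm{D}=\nabla+\xi\otimes\text{\rm Id}\,$ is flat. Thus $\,(\mathrm{D},\xi)\,$ lies in the second set.

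Next I would construct the inverse by reading (\ref{rdr}) in the opposite direction: given a pair $\,(\mathrm{D},\xi)\,$ with $\,\mathrm{D}\,$ flat, set $\,\nabla=\mathrm{D}-\xi\otimes\text{\rm Id}\,$. Since $\,\mathrm{D}=\nabla+\xi\otimes\text{\rm Id}\,$, equation (\ref{rdr}) together with $\,R^{\mathrm{D}}=0\,$ yields
\[
R^\nabla\nh(u,v)\hs\psi\,\,=\,\,(d\xi)(u,v)\hs\psi,
\]
so $\,\nabla\,$ is projectively flat with $\,R^\nabla=d\xi\otimes\text{\rm Id}$, and the pair $\,(\nabla,\xi)\,$ belongs to the first set. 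The two assignments $\,(\nabla,\xi)\mapsto(\nabla+\xi\otimes\text{\rm Id},\xi)\,$ and $\,(\mathrm{D},\xi)\mapsto(\mathrm{D}-\xi\otimes\text{\rm Id},\xi)\,$ are obviously mutually inverse, which establishes the bijection.

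There is no real obstacle: the entire content of the lemma is repackaging (\ref{rdr}) as a bijection, the only subtlety being to notice that the restriction ``$\rho=d\xi$'' in the first set is precisely what is needed to match the $\,-\hs[(d\xi)(u,v)]\hs\psi\,$ correction term in (\ref{rdr}), and that no further compatibility has to be verified because $\,\xi\,$ is carried along unchanged by both maps.
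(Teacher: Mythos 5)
Your argument is correct and is exactly the paper's own reasoning: the paper proves this lemma with the single remark that it is obvious from (\ref{rdr}), and your write-up simply carries out that substitution in both directions, verifying that the two assignments are mutually inverse. Nothing is missing.
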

This is obvious from (\ref{rdr}). Using Lemma~\ref{prffl} and 
Remark~\ref{prjfl} we now obtain the following conclusion.
\begin{cor}\label{ortbl}A real vector bundle over a manifold admits a 
projectively flat connection if and only if it admits a flat connection.
\end{cor}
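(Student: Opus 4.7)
The plan is to prove the two implications separately, each being a short application of the earlier results. The first direction is essentially free: any flat connection $\nabla$ has curvature $R^\nabla=0$, which trivially equals $0\otimes\mathrm{Id}$, so $\nabla$ is projectively flat in the sense defined here. Hence the existence of a flat connection on $\bv$ immediately yields the existence of a projectively flat one.

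For the nontrivial direction, assume a projectively flat connection $\nabla$ in a real vector bundle $\bv$ over the manifold $\bs$. By definition there is a $\,2$-form $\rho$ on $\bs$ with $R^\nabla = \rho\otimes\mathrm{Id}$. Remark~\ref{prjfl} asserts that $\rho$ is closed and, because $\bv$ is a real bundle, that its de Rham class vanishes: $[\rho]=0$ in $H^2(\bs,\bbR)$. Therefore one may pick a global $\,1$-form $\xi$ on $\bs$ satisfying $d\xi=\rho$. With this choice the pair $(\nabla,\xi)$ falls squarely into the domain of the bijection of Lemma~\ref{prffl}, whose image consists of pairs $(\mathrm{D},\xi)$ with $\mathrm{D}=\nabla+\xi\otimes\mathrm{Id}$ flat. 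Consequently $\mathrm{D}$ is the required flat connection on $\bv$.

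In short, the argument is a chain: projective flatness furnishes $\rho$; Remark~\ref{prjfl} gives $[\rho]=0$ over $\bbR$; de Rham cohomology gives a primitive $\xi$ with $d\xi=\rho$; Lemma~\ref{prffl} converts $(\nabla,\xi)$ into a flat connection $\mathrm{D}$. There is no genuine obstacle beyond assembling these ingredients; the one place where the real (as opposed to complex) coefficients are essential is the vanishing of $[\rho]$ in Remark~\ref{prjfl}, which rests on the existence of a connection compatible with a Riemannian fibre metric, a device unavailable over $\bbC$.
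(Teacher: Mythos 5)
Your argument is correct and is exactly the paper's route: the paper derives the corollary directly from Remark~\ref{prjfl} (giving $[\rho]=0$ over $\bbR$, hence a global primitive $\xi$ with $d\xi=\rho$) together with Lemma~\ref{prffl} (converting $(\nabla,\xi)$ into a flat $\mathrm{D}$), with the trivial converse since $R=0=0\otimes\mathrm{Id}$. You have merely written out the details the paper leaves implicit.
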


\section{Skew-sym\-me\-try of the Ric\-ci tensor}\label{ssrt}
\begin{lem}\label{rctoc}A connection\/ $\,\nabla\hn\,$ in the tangent bundle\/ 
$\,\tb\,$ of a real surface\/ $\,\bs\,$ is projectively flat in the sense 
of Section\/~{\rm\ref{pfco}} if and only if\/ $\,\nabla\hn\,$ has 
skew-sym\-met\-ric Ric\-ci tensor, and then 
$\,R=\rho\hh\otimes\text{\rm Id}\,$ for the Ric\-ci tensor\/ $\,\rho\,$ 
of\/ $\,\nabla$.
\end{lem}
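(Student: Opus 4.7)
The plan is to work pointwise and reduce the statement to a simple linear-algebra fact about endomorphisms of a two-di\-men\-sion\-al vector space. Fix a point $\,x\in\bs\,$ and a local frame $\,e_1,e_2\,$ on a neighborhood, with dual coframe $\,\theta^1,\theta^2\,$ and the nonzero local $\,2$-form $\,\omega=\theta^1\wedge\theta^2$. Since $\,\Lambda^2T^*\hn\bs\,$ is of rank one and the operator $\,R\hh(\nh u,v):\tb\to\tb\,$ of \eqref{ope}.i depends on $\,u,v\,$ skew-sym\-met\-ri\-cal\-ly, one can write locally
\begin{equation*}
R\hh(\nh u,v)\,\,=\,\,\omega(\nh u,v)\,A\hs,\qquad A\,=\,R\hh(e_1,e_2)\in\mathrm{End}(\tb)\hs.
\end{equation*}
In these terms, projective flatness in the sense of Section~\ref{pfco} --- i.e., $\,R=\rho\hh\otimes\mathrm{Id}\,$ for some $\,2$-form $\,\rho$ --- is tautologically equivalent to the requirement that, at every point, the endomorphism $\,A\,$ be a scalar multiple of $\,\mathrm{Id}\nh$.

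Next I would express the Ric\-ci tensor in terms of $\,A\,$ and $\,\omega$. For fixed $\,v,w$, the map $\,u\mapsto\omega(\nh u,v)A(w)\,$ is a rank-one endomorphism of $\,T_{\nnh x\hh}\bs\,$ of the form $\,\alpha\otimes b\,$ with $\,\alpha=\omega(\cdot,v)\,$ and $\,b=A(w)$, so its trace equals $\,\alpha(b)=\omega(A(w),v)$. Hence
\begin{equation*}
\rho(v,w)\,\,=\,\,\mathrm{tr}\hs[\hh u\mapsto R\hh(\nh u,v)w\hh]\,\,=\,\,\omega(A(w),v)\hs.
\end{equation*}
Skew-sym\-me\-try of $\,\rho\,$ at $\,x\,$ in particular yields $\,\rho(v,v)=\omega(Av,v)=0\,$ for every $\,v\in T_{\nnh x\hh}\bs$, a pointwise linear condition on $\,A$.

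The main step is the short linear-algebra fact that, on a two-di\-men\-sion\-al vector space $\,V\,$ with a nonzero $\,2$-form $\,\omega$, an endomorphism $\,A\,$ satisfies $\,\omega(Av,v)=0\,$ for all $\,v\in V\,$ if and only if $\,A\,$ is a scalar multiple of $\,\mathrm{Id}\nh$; one checks this by writing $\,A\,$ as a $\,2\hn\times\nh2\,$ matrix in the frame $\,e_1,e_2\,$ and noting that $\,v\mapsto\omega(Av,v)\,$ becomes a quadratic form whose vanishing forces the off-di\-ag\-o\-nal entries to be zero and the di\-ag\-o\-nal entries to be equal. Combined with the previous paragraph, this produces the chain: $\,\rho\,$ skew-sym\-met\-ric $\,\Leftrightarrow\,$ $\,A\,$ is a pointwise scalar multiple of $\,\mathrm{Id}\,$ $\,\Leftrightarrow\,$ $\,\nabla\hn\,$ is projectively flat. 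Finally, once $\,A=c\,\mathrm{Id}\,$ the displayed formulas give $\,R\hh(\nh u,v)=c\hs\omega(\nh u,v)\,\mathrm{Id}\,$ and $\,\rho(v,w)=c\hs\omega(w,v)$, which matches the $\,2$-form coefficient in $\,R=\rho\hh\otimes\mathrm{Id}\,$ with the Ric\-ci tensor itself, up to the sign dictated by the curvature convention \eqref{cur}. The only delicate point in the whole argument is keeping this sign bookkeeping consistent; all the geometric content is contained in the $\,2$-di\-men\-sion\-al linear-algebra observation about $\,\omega(Av,v)$.
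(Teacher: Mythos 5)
Your argument is correct and is essentially the paper's own proof: both factor $\,R\,$ as a nowhere-vanishing $\,2$-form times an endomorphism field $\,A\,$ and use skew-symmetry of the Ricci tensor, via $\,\omega(Av,v)=0\,$ at each point, to force $\,A\,$ to be a pointwise multiple of $\,\mathrm{Id}\,$ (the paper phrases this last step as every nonzero vector being an eigenvector of $\,A$, while you carry out the equivalent $\,2\times2\,$ matrix computation). The only point to tighten is the closing identification of the $\,2$-form coefficient with the Ricci tensor itself, which under the paper's conventions holds exactly, not merely up to sign.
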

\begin{proof}If $\,R=\rho\hh\otimes\text{\rm Id}\,$ for {\it some\/} 
$\,2$-form $\,\rho$, then the Ric\-ci tensor of $\,\nabla\hn\,$ is 
skew-sym\-met\-ric, since it equals $\,\rho$. Conversely, let the Ric\-ci 
tensor $\,\rho\,$ be skew-sym\-met\-ric. As the discussion is local and 
$\,\dim\bs=2$, we may assume that $\,\bs\,$ is orientable and choose a 
$\,2$-form $\,\zeta\,$ on $\,\bs\,$ without zeros. Thus, 
$\,R=\zeta\otimes A\,$ for some bundle morphism $\,A:\tb\to\tb$. 
Skew-sym\-me\-try of $\,\rho\,$ now gives 
$\,0=\rho\hs(\nh u,u)=\zeta(\nh u,Au)$ for every vector field $\,u$, so that 
every nonzero vector tangent to $\,\hh\bs\,$ at any point $\,y$ is an 
eigenvector of $\,A_y$, and, consequently, $\,A_y$ is a multiple of 
$\,\mathrm{Id}\hh$, as required.
\end{proof}
We have the following obvious consequence of Lemmas~\ref{prffl} 
and~\ref{rctoc}.
\begin{cor}\label{dxtna}Given a surface\/ $\,\bs$, the assignment\/ 
$\,(\nabla,\hs\xi)\,\mapsto\,(\mathrm{D}\hh,\hs\xi)$, where\/ 
$\,\mathrm{D}\hs=\hs\nabla\hs+\,\hh\xi\otimes\mathrm{Id}\hh$, defines 
a bijective correspondence between the set of all pairs\/ 
$\,\hs(\nabla,\hs\xi)$ consisting of a connection\/ $\,\nabla\hn\,$ on\/ 
$\,\bs\,$ along with a $\,1$-form\/ $\,\xi\nh\,$ such that\/ $\,\hs d\hs\xi\,$ 
equals the Ric\-ci tensor of\/ $\,\nabla\nnh$, and the set of all pairs\/ 
$\,(\mathrm{D}\hh,\hs\xi)\,$ consisting of any flat connection\/ 
$\,\mathrm{D}\,$ on\/ $\,\bs\nh\,$ and any $\,1$-form\/ $\,\xi\nh\,$ on\/ 
$\,\bs$.
\end{cor}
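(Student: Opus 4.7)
The plan is to derive this corollary by directly composing the two preceding results, applied to the tangent bundle $\,\bv=\tb\,$ of the surface. Lemma~\ref{prffl} already provides the required bijection $\,(\nabla,\xi)\mapsto(\mathrm{D},\xi)\,$ at the level of \emph{projectively flat} connections, and all that remains is to translate the projective-flatness condition on $\,\nabla\,$ into the condition on the Ric\-ci tensor stated in the corollary, using Lemma~\ref{rctoc}.

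First I would specialize Lemma~\ref{prffl} to $\,\bv=\tb\,$: this gives a bijection between pairs $\,(\nabla,\xi)\,$ with $\,\nabla\,$ projectively flat on $\,\bs\,$ and $\,R^\nabla=d\hs\xi\otimes\text{\rm Id}\,$, and pairs $\,(\mathrm{D},\xi)\,$ with $\,\mathrm{D}\,$ flat. Second, I would invoke Lemma~\ref{rctoc}, which, on a surface, identifies projective flatness of a connection in $\,\tb\,$ with skew-sym\-me\-try of its Ric\-ci tensor $\,\rho\,$, and moreover forces the $\,2$-form appearing in the equation $\,R=\rho\otimes\text{\rm Id}\,$ to be precisely $\,\rho\,$. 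Consequently, the condition $\,R^\nabla=d\hs\xi\otimes\text{\rm Id}\,$ appearing in Lemma~\ref{prffl} is, in the surface case, equivalent to the single requirement that $\,d\hs\xi\,$ equal the Ric\-ci tensor of $\,\nabla\,$ (skew-sym\-me\-try being automatic, since $\,d\hs\xi\,$ is a $\,2$-form).

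Combining these two observations yields exactly the bijective correspondence asserted in Corollary~\ref{dxtna}. The only point that requires any care is the direction from `$d\hs\xi\,$ equals the Ric\-ci tensor' back to `$\,\nabla\,$ is projectively flat', but this is precisely the nontrivial content of Lemma~\ref{rctoc}: the equality $\,d\hs\xi=\rho\,$ forces $\,\rho\,$ to be skew-sym\-met\-ric, whereupon the lemma supplies $\,R^\nabla=\rho\otimes\text{\rm Id}=d\hs\xi\otimes\text{\rm Id}\,$ as needed. Since no genuine obstacle arises, the proof reduces to this short verification that the hypotheses of the two lemmas interlock correctly.
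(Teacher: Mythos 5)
Your argument is correct and is exactly the paper's route: the paper states Corollary~\ref{dxtna} as an ``obvious consequence of Lemmas~\ref{prffl} and~\ref{rctoc},'' i.e.\ precisely the specialization of Lemma~\ref{prffl} to $\,\bv=\tb\,$ combined with the identification, via Lemma~\ref{rctoc}, of the condition $\,R^\nabla=d\hs\xi\otimes\mathrm{Id}\,$ with the requirement that $\,d\hs\xi\,$ be the Ric\-ci tensor of $\,\nabla$. Your remark that skew-sym\-me\-try is automatic because $\,d\hs\xi\,$ is a $\,2$-form correctly handles the only point needing care, so nothing is missing.
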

\begin{rem}\label{torfm}In general, if connections $\,\nabla\hn\,$ and  
$\,\mathrm{D}\,$ on a surface are related by 
$\,\mathrm{D}\hs=\hs\nabla\hs+\,\hh\xi\otimes\mathrm{Id}\hh$, with a 
$\,1$-form $\,\xi$, then, obviously, $\,\tau\nh=\hs\t+\xi$, for the torsion 
$\,1$-forms $\,\t\,$ of $\,\nabla\hn\,$ and $\,\tau\hn\,$ of 
$\,\mathrm{D}$, defined as in Section~\ref{prel}.
\end{rem}

\section{The case of closed surfaces}\label{tccs}
The next result generalizes a theorem of Bla\-\v zi\'c and Bo\-kan 
\cite{blazic-bokan}, mentioned in the Introduction.
\begin{thm}\label{clsrf}A closed surface admitting a connection with 
skew-sym\-met\-ric Ric\-ci tensor\/ $\,\nh\rho\hs\,$ is dif\-feo\-mor\-phic 
to\/ $\,\hs T\hh^2\nnh$ or the Klein bottle, and the $\,2$-form\/ $\,\rho\,$ 
is exact.
\end{thm}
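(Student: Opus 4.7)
The plan is to combine the structural results of Sections~\ref{pfco}--\ref{ssrt} with one classical topological input. First, Lemma~\ref{rctoc} identifies skew-sym\-me\-try of the Ric\-ci tensor $\,\rho\,$ with projective flatness of $\,\nabla\,$ on $\,T\bs$, so that $\,R=\rho\otimes\mathrm{Id}$. By Remark~\ref{prjfl}, which invokes Remark~\ref{cohom} for the real vector bundle $\,T\bs$, the $\,2$-form $\,\rho\,$ is closed with $\,[\hs\rho\hs]=0\,$ in $\,H^2(\bs,\mathbf{R})$. Since $\,\bs\,$ is a closed surface, this already establishes the second assertion: $\,\rho\,$ is exact.

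To exploit the first assertion, I would next choose a $\,1$-form $\,\xi\,$ with $\,d\xi=\rho\,$ (which exists by the previous paragraph) and apply Corollary~\ref{ortbl} (equivalently, Corollary~\ref{dxtna}): the projectively flat $\,\nabla\,$ is then gauged into a genuinely flat connection $\,\mathrm{D}\hs=\hs\nabla\hs+\hs\xi\otimes\mathrm{Id}\,$ on the rank-two real bundle $\,T\bs$. The problem is thus reduced to a classical question: which closed surfaces carry a flat connection on their tangent bundle?

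This last question is answered by a theorem going back to Mil\-nor: a closed surface whose tangent bundle admits a flat connection must have vanishing Euler characteristic. In the orientable case $\,\bs=\bs_g$, this follows from Milnor's inequality $\,|e(T\bs_g)|\leq\max(0,g-1)\,$ for the Euler class of any flat $\,GL(2,\mathbf{R})^+$-bundle, combined with $\,e(T\bs_g)=2-2g\,$; these together force $\,g=1$, so $\,\bs=T\hh^2$. In the non-orientable case, pulling $\,\mathrm{D}\,$ back to the orientation double cover reduces matters to the orientable case, forcing the cover to be $\,T\hh^2\,$; the only non-orientable surface doubly covered by $\,T\hh^2\,$ is the Klein bottle. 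The main obstacle in this plan is this last, purely topological step, which rests on a Mil\-nor-type Euler class inequality; everything preceding it is a formal consequence of the results already established in Sections~\ref{pfco}--\ref{ssrt}.
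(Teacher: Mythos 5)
Your argument follows the paper's own proof essentially step for step: exactness of $\,\rho\,$ via Remark~\ref{prjfl} (and Remark~\ref{cohom}), passage to a flat connection on $\,\tb\,$ via Lemma~\ref{rctoc} together with Corollary~\ref{ortbl} (equivalently Corollary~\ref{dxtna}), and then Milnor's theorem on flat tangent-bundle connections over closed surfaces. The only difference is that the paper simply cites Milnor \cite{milnor}, whereas you spell out the Euler-class inequality and the orientation double-cover reduction, which is a correct elaboration of the same step.
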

\begin{proof}Exactness of $\,\rho\,$ is a consequence of Remark~\ref{prjfl}. 
Thus, in view of Lemma~\ref{rctoc} and Corollary~\ref{ortbl}, $\,\bs\,$ admits 
a flat connection. Our assertion is now immediate from a result of Milnor 
\cite{milnor}.
\end{proof}
Note that, being exact, $\,\rho\,$ in Theorem~\ref{clsrf} must vanish 
somewhere: if it did not, it would distinguish an orientation of the surface, 
for which the oriented integral of the positive form $\,\rho\,$ would be 
positive, thus contradicting the exactness of $\,\rho\,$ via the Stokes 
theorem.

Bla\-\v zi\'c and Bo\-kan \cite{blazic-bokan} exhibited a non\hh-flat 
tor\-sion\-free connection $\,\nabla\hn\,$ with skew-sym\-met\-ric Ric\-ci 
tensor on the torus $\,T\hh^2\nnh$, which admits a $\,\nabla\nh$-par\-al\-lel 
almost complex structure, and belongs to a family constructed by Si\-mon 
\cite[p.\ 322]{simon-87}.

Theorem~\ref{clsrf} and Corollary~\ref{dxtna} yield the following description 
of all connections with skew-sym\-met\-ric Ric\-ci tensor on closed surfaces:
\begin{thm}\label{class}Let\/ $\,\bs\,$ be dif\-feo\-mor\-phic to\/ $\,T\hh^2$ 
or the Klein bottle. If\/ $\,\mathrm{D}\hn\,$ is any flat connection on\/ 
$\,\bs$, and\/ $\,\xi\nnh\,$ is a $\,1$-form on\/ $\,\bs$, then the 
connection\/ $\,\,\nabla=\,\mathrm{D}\hs-\,\hh\xi\otimes\mathrm{Id}$ on\/ 
$\,\bs\,$ has skew-sym\-met\-ric Ric\-ci tensor. Conversely, every connection 
with skew-sym\-met\-ric Ric\-ci tensor on\/ $\,\bs\,$ equals\/ 
$\,\mathrm{D}\hs-\,\hh\xi\otimes\mathrm{Id}\,$ for some such\/ 
$\,\mathrm{D}\hs$ and\/ $\,\xi$.
\end{thm}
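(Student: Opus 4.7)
The plan is to reduce the statement to the bijective correspondence of Corollary~\ref{dxtna}, combined with the exactness of $\,\rho\,$ established in Theorem~\ref{clsrf}.

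For the forward direction, I would start with an arbitrary flat connection $\,\mathrm{D}\,$ on $\,\bs\,$ and an arbitrary $\,1$-form $\,\xi$, and form the pair $\,(\mathrm{D},\xi)\,$ belonging to the second set in Corollary~\ref{dxtna}. Tracing back through the bijection, its preimage is $\,(\nabla,\xi)\,$ with $\,\nabla=\mathrm{D}-\xi\otimes\mathrm{Id}\,$ and Ric\-ci tensor equal to $\,d\xi$. Since $\,d\xi\,$ is a $\,2$-form, it is automatically skew-sym\-met\-ric, which gives the conclusion. Note that this half does not use the topological hypothesis on $\,\bs\,$ at all.

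For the converse, assume $\,\nabla\,$ is a connection on the closed surface $\,\bs\,$ with skew-sym\-met\-ric Ric\-ci tensor $\,\rho$. Theorem~\ref{clsrf} yields that $\,\rho\,$ is exact, so I choose $\,\xi\,$ with $\,d\xi=\rho$. The pair $\,(\nabla,\xi)\,$ then lies in the first set of Corollary~\ref{dxtna}, so the corresponding $\,\mathrm{D}=\nabla+\xi\otimes\mathrm{Id}\,$ is flat, and rearranging gives $\,\nabla=\mathrm{D}-\xi\otimes\mathrm{Id}\,$ with $\,\mathrm{D}\,$ flat and $\,\xi\,$ a $\,1$-form, as required. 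The only substantive input is Theorem~\ref{clsrf}, whose restriction of $\,\bs\,$ to $\,T^2\,$ or the Klein bottle is exactly what ensures that every closed $\,2$-form representing zero in de Rham cohomology can arise as $\,d\xi$; there is no remaining obstacle beyond book-keeping of signs, which matches since Corollary~\ref{dxtna} is stated with $\,\mathrm{D}=\nabla+\xi\otimes\mathrm{Id}$.
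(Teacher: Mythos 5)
Your proof is correct and follows essentially the paper's own route: the theorem is presented there as an immediate consequence of Theorem~\ref{clsrf} (exactness of $\rho$) and the bijection of Corollary~\ref{dxtna}, which is exactly how you argue both directions. One small caveat about your closing remark: the restriction to $T^2$ or the Klein bottle is not what makes $\rho$ exact (exactness follows from Remark~\ref{prjfl} on any closed surface, and an exact form is by definition of the form $d\hs\xi$); rather, that hypothesis matters because, by Milnor's result \cite{milnor}, these are the only closed surfaces carrying flat connections, so it is what makes the forward direction non-vacuous and the classification exhaustive.
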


\section{Wong's theorem}\label{wong}
Corollary~\ref{dxtna} leads to an obvious coordinate formula for connections 
$\,\nabla\hn\,$ with skew-sym\-met\-ric Ric\-ci tensor on surfaces, which 
produces all lo\-cal-e\-quiv\-a\-lence classes of such $\,\nabla$. 
Specifically, one needs to provide a flat connection $\,\mathrm{D}\hn\,$ along 
with a $\,1$-form $\,\xi$, and then set 
$\,\nabla\hs=\hskip2pt\mathrm{D}\hskip2pt-\,\hh\xi\otimes\mathrm{Id}\hh$. In a 
fixed coordinate system, $\,\mathrm{D}\,$ can be introduced by prescribing a 
basis of $\,u,v\,$ of $\,\mathrm{D}\hs$-par\-al\-lel vector fields, that is, 
four arbitrary functions subject just to one determinant condition (linear 
independence of $\,u\,$ and $\,v$), while $\,\xi\,$ amounts to two more 
arbitrary functions.

Using as many as six arbitrary functions is redundant, and their number is 
easily reduced. For instance, requiring $\,u\,$ to be the first coordinate 
vector field $\,\hs\partial_1$ leaves us with just four arbitrary functions 
(the first two of the six now being the constants $\,1\,$ and $\,0$). Another 
way of replacing six arbitrary functions with four consists in choosing 
$\,u\,$ as well as $\,v\,$ to be a product of a positive function and a 
coordinate vector field. In fact, whenever vector fields $\,u,v\,$ on a 
surface are linearly independent at each point, there exist, locally, 
coordinates $\,y^1\nnh,y^2$ with $\,u=e^\chi\hs\partial_1$, 
$\,v=e^\beta\hs\partial_2$ for some functions $\,\beta,\chi$. Namely, the 
distributions spanned by $\,u\,$ and $\,v$, being one\hh\diml, are integrable, 
and so their leaves are, locally, the level curves of some functions 
$\,y^1\nnh,y^2$ without critical points, which means that $\,u,v\,$ are 
functional multiples of $\,\hs\partial_1$ and $\,\hs\partial_2$, while 
positivity of the factor functions $\,e^\chi$ and $\,e^\beta$ is achieved by 
adjusting the signs of $\,y^1$ and $\,y^2\nnh$, if necessary.

It is this last approach that allows us to simplify Wong's result 
\cite[Theorem~4.2]{wong}, by dropping the assumption that $\,\rho\ne0$, and 
reducing the number of separate coordinate expressions from three to one:
\begin{thm}\label{wongs}For a tor\-sion\-free connection\/ $\,\nabla\nh\,$ on 
a surface\/ $\,\bs$, the Ric\-ci tensor\/ $\,\rho\,$  of\/ $\,\nabla\hn\,$ is 
skew-sym\-met\-ric if and only if every point of\/ $\,\bs\,$ has a 
neighborhood\/ $\,\,U$ with coordinates $\,y^1\nnh,y^2$ in which the 
component functions of\/ $\,\nabla\hn\,$ are 
$\,\vg_{\!11}^1=-\hs\partial_1\varphi$, 
$\,\vg_{\!22}^2=\hs\partial_2\varphi\,$ for some function $\,\varphi$, and\/ 
%$\,\vg_{\!11}^2=\vg_{\!22}^1=\vg_{\!12}^1=\vg_{\!12}^2$
$\,\vg_{\hskip-2.7ptjk}^{\hs l}=0\,$ unless $\,j=k=l$.
\end{thm}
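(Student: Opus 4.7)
The plan is to derive Theorem~\ref{wongs} from Corollary~\ref{dxtna} combined with the coordinate construction sketched in the paragraphs immediately preceding the statement, using torsion-freeness to supply the extra identities that reduce the six arbitrary functions of the generic construction down to the single scalar $\,\varphi$.

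For the \emph{only if} direction, Corollary~\ref{dxtna} supplies, locally, a flat connection $\,\mathrm{D}=\nabla+\xi\otimes\mathrm{Id}\,$ for some $\,1$-form $\,\xi$. Flatness yields a $\,\mathrm{D}$-par\-al\-lel frame $\,u,v$, and the discussion preceding the theorem produces coordinates $\,y^1\nh,y^2\,$ with $\,u=e^\chi\hh\partial_1\,$ and $\,v=e^\beta\hh\partial_2\,$ for suitable functions $\,\chi,\beta$. Rewriting $\,\mathrm{D}_X u=\mathrm{D}_X v=0\,$ in terms of $\,\nabla\,$ and expanding by the Leibniz rule yields $\,\nabla_{\partial_j}\partial_1=-(\xi_j+\partial_j\chi)\hh\partial_1\,$ and $\,\nabla_{\partial_j}\partial_2=-(\xi_j+\partial_j\beta)\hh\partial_2$, so that $\,\vg_{\hskip-2.7ptjk}^{\hs l}=0\,$ whenever $\,l\ne k$. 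Among the remaining components only $\,\vg_{\!11}^{\hs 1},\hh\vg_{\!21}^{\hs 1},\hh\vg_{\!12}^{\hs 2},\hh\vg_{\!22}^{\hs 2}\,$ can therefore be nonzero, and the torsion-free symmetries $\,\vg_{\hskip-2.7pt12}^{\hs l}=\vg_{\hskip-2.7pt21}^{\hs l}\,$ then force $\,\vg_{\!21}^{\hs 1}=\vg_{\!12}^{\hs 2}=0$, equivalently $\,\xi_2=-\partial_2\chi\,$ and $\,\xi_1=-\partial_1\beta$. Substituting these into the expressions for $\,\vg_{\!11}^{\hs 1}\,$ and $\,\vg_{\!22}^{\hs 2}\,$ shows that they depend on $\,\chi,\beta\,$ only through $\,\varphi=\chi-\beta$, producing exactly the claimed coordinate form.

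For the \emph{if} direction I would reverse the construction: given $\,\nabla\,$ in the stated form, set $\,\chi=\varphi$, $\,\beta=0\,$ and $\,\xi=-(\partial_2\varphi)\,dy^2$, and check directly that $\,e^\varphi\hh\partial_1\,$ and $\,\partial_2\,$ are parallel for $\,\mathrm{D}=\nabla+\xi\otimes\mathrm{Id}$; hence $\,\mathrm{D}\,$ is flat and Corollary~\ref{dxtna} delivers the skew-symmetry of $\,\rho$.

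The main obstacle is purely bookkeeping: one must keep straight which $\,\mathrm{D}$-par\-al\-lel vector field contributes each family of Christoffel symbols, and extract from the two torsion-free symmetries the precise pair of equations that allows $\,\xi\,$ to be absorbed into the gradients of $\,\chi\,$ and $\,\beta$. Conceptually the heart of the argument is the simultaneous adaptation of coordinates to the integral foliations of the two transverse $\,\mathrm{D}$-par\-al\-lel line fields, followed by the use of torsion-freeness to eliminate the off-di\-ag\-o\-nal terms.
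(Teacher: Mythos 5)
Your proposal is correct, and its main direction is essentially the paper's own argument: the same flat connection $\,\mathrm{D}=\nabla+\xi\otimes\mathrm{Id}\,$ from Corollary~\ref{dxtna}, the same $\,\mathrm{D}$-par\-al\-lel frame written as $\,e^\chi\partial_1,\hs e^\beta\partial_2$, and the same reduction to $\,\varphi=\chi-\beta$. The only cosmetic difference there is where torsion-freeness enters: the paper identifies $\,\xi\,$ with the torsion $\,1$-form of $\,\mathrm{D}\,$ (Remark~\ref{torfm}) and reads off $\,\xi_1=-\partial_1\beta$, $\,\xi_2=-\partial_2\chi\,$ from $\,[u,v]=\T(v,u)$, whereas you impose the symmetry $\,\vg_{\hskip-2.7pt12}^{\hs l}=\vg_{\hskip-2.7pt21}^{\hs l}\,$ on the resulting Christoffel symbols; these are the same equations in different clothing. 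Where you genuinely diverge is the converse: the paper verifies directly from (\ref{cur}) that $\,R=\rho\otimes\mathrm{Id}\,$ with $\,\rho_{12}=-\partial_1\partial_2\varphi$, while you run the construction backwards, exhibiting $\,\xi=-(\partial_2\varphi)\,dy^2\,$ and the $\,\mathrm{D}$-par\-al\-lel frame $\,e^\varphi\partial_1,\hs\partial_2$, and then invoking Corollary~\ref{dxtna}; both routes are a few lines, yours trades the curvature computation for a parallelism check and stays entirely inside the $\,(\nabla,\xi)\leftrightarrow(\mathrm{D},\xi)\,$ formalism (and as a bonus it reproduces $\,\rho=d\hs\xi\,$ with $\,\rho_{12}=-\partial_1\partial_2\varphi\,$ for free). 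One small point of attribution, shared with the paper's own wording: the local existence of $\,\xi\,$ with $\,d\hs\xi=\rho\,$ is not part of Corollary~\ref{dxtna} itself but rests on the closedness of $\,\rho\,$ (Lemma~\ref{rctoc} together with Remark~\ref{prjfl}), so it is worth a half-sentence acknowledging that.
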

\begin{proof}If $\,\rho\,$ is skew-sym\-met\-ric, we may choose, locally, a 
$\,1$-form $\,\xi\,$ with $\,\hs d\hs\xi=\rho\,$ and linearly independent 
vector fields $\,u,v\,$ that are $\,\mathrm{D}\hs$-par\-al\-lel, for the flat 
connection $\,\mathrm{D}\hs=\hs\nabla\hs+\,\hh\xi\otimes\mathrm{Id}\,$ (see 
Corollary~\ref{dxtna}), and then pick local coordinates $\,y^1\nnh,y^2$ such 
that $\,u=e^\chi\hh\partial_1$, $\,v=e^\beta\hh\partial_2$ for some functions 
$\,\beta,\chi$, as described above. Since $\,\nabla\hn\,$ is tor\-sion\-free, 
$\,\xi\,$ is the torsion $\,1$-form of $\,\mathrm{D}$, cf.\ 
Remark~\ref{torfm}. Thus, $\,\T=\hh\xi\wedge\mathrm{Id}\,$ is the torsion 
tensor of $\,\mathrm{D}$, and so $\,\xi(v)\hh u-\xi(u)\hh v=\T(v,u)=[u,v]$, 
while $\,[u,v]=[e^\chi\hh\partial_1,\hh e^\beta\hh\partial_2]$, so that 
the functions $\,\xi_j=\xi(\partial_j)$ are given by 
$\,\xi_1=-\hs\partial_1\beta$, $\,\xi_2=-\hs\partial_2\chi$. As 
$\,\mathrm{D}u=\mathrm{D}v=0\,$ and 
$\,\nabla\hs=\hskip2pt\mathrm{D}\hskip2pt-\,\hh\xi\otimes\mathrm{Id}$, we get 
$\,\nabla u=-\hs\xi\otimes u$, $\,\nabla v=-\hs\xi\otimes v$, which, for 
$\,\hs\partial_1=e^{-\chi}u$, $\,\hs\partial_2=e^{-\beta}v$, yields 
$\,\nabla\partial_1=-\hs(\xi+\hs d\chi)\otimes\partial_1$, 
$\,\nabla\partial_2=-\hs(\xi+\hs d\beta)\otimes\partial_2$. Since 
$\,\xi_1=-\hs\partial_1\beta\,$ and $\,\xi_2=-\hs\partial_2\chi$, setting 
$\,\varphi=\chi-\beta$, we obtain the required expressions for 
$\,\vg_{\hskip-2.7ptjk}^{\hs l}$.

Conversely, for a connection $\,\nabla\hn\,$ with 
$\,\vg_{\hskip-2.7ptjk}^{\hs l}$ as in the statement of the theorem, using 
(\ref{cur}) with $\,u,v\,$ replaced by $\,\hs\partial_1,\hs\partial_2$, 
and $\,\psi=\hs\partial_1$ or $\,\psi=\hs\partial_2$, we see that 
$\,R=\rho\hh\otimes\text{\rm Id}\,$ with 
$\,\rho_{12}=-\rho_{21}=-\hs\partial_1\hs\partial_2\varphi$, which completes 
the proof.
\end{proof}

\section{Left-in\-var\-i\-ant connections on Lie groups}\label{lclg}
We say that a connection $\,\nabla\hn\,$ on a manifold $\,\bs\,$ is {\it 
locally equivalent\/} to a connection $\,\nabla'$ on a manifold $\,\bs\hs'$ if 
every point of $\,\bs\,$ has a connected neighborhood $\,\,U\,$ with an 
af\-fine dif\-feo\-mor\-phism $\,\,U\nh\to U'$ onto an open subset $\,\,U'$ of 
$\,\bs\hs'\nnh$.

Here and in the next two sections we describe all 
lo\-cal-\hh e\-quiv\-a\-lence types of locally homogeneous connections with 
skew-sym\-met\-ric Ric\-ci tensor on surfaces. They all turn out to be 
represented by left-in\-var\-i\-ant connections on Lie groups, which is why we 
discuss the Lie\hs-group case first.
\begin{ex}\label{hlfpl}Given an area form 
$\,\alpha\in[\plane^*]^{\wedge2}\nh\smallsetminus\{0\}\,$ in a two\hh\diml\ 
real vector space $\,\plane\,$ and a one\hh\diml\ vector sub\-space 
$\,\line\,$ of $\,\plane$, we denote by $\,\mathrm{D}\,$ the standard 
(trans\-la\-tion-in\-var\-i\-ant) flat tor\-sion\-free connection on 
$\,\plane$, by $\,\xi\,$ be the $\,1$-form on $\,\plane\,$ given by 
$\,\xi_y(v)=\alpha(y,v)$, for $\,y\in\plane\,$ and $\,v\in\plane=T_y\plane$, 
and by $\,\bs\,$ a fixed side of $\,\line\,$ in $\,\plane\,$ (that is, a 
connected component of $\,\plane\smallsetminus\line$). Let $\,G\,$ be the 
two\hh\diml\ non\hs-Abel\-i\-an connected subgroup of the group 
$\,\mathrm{SL}\hs(\plane)$, formed by those elements of 
$\,\mathrm{SL}\hs(\plane)\,$ which leave $\,\line\,$ invariant and operate in 
$\,\line$ via multiplication by positive scalars. Since $\,G\,$ acts on 
$\,\bs\,$ freely and transitively, choosing a point in $\,\bs\,$ we identify 
$\,\bs\,$ with $\,G\,$ and treat the action as consisting of the left 
translations in $\,G$. The restrictions of $\,\mathrm{D}\,$ and $\,\xi\,$ to 
$\,\bs=G\,$ then are invariant under all left translations, and hence so is 
the connection 
$\,\nabla\nh=\hskip1pt\mathrm{D}\hskip1pt-\,\hh\xi\otimes\mathrm{Id}$ on 
$\,\bs=G$. As $\,d\hs\xi=2\hh\alpha$, where $\,\alpha\,$ is now treated as a 
constant (trans\-la\-tion-in\-var\-i\-ant) $\,2$-form on $\,\plane$, 
Corollary~\ref{dxtna} implies that $\,\nabla\hn\,$ has the skew-sym\-met\-ric 
Ric\-ci tensor $\,\rho=2\hh\alpha\ne0\,$ and, by Remark~\ref{torfm}, the 
torsion $\,1$-form of $\,\nabla\hn\,$ is $\,\t=-\,\hh\xi\ne0$.
\end{ex}
We always identify the elements of the Lie algebra $\,\mathfrak{g}\,$ of any 
Lie group $\,G$ with left-in\-var\-i\-ant vector fields on $\,G$. The symbol 
$\,\mathfrak{sl}\hh(\plane)\,$ denotes the Lie algebra of traceless 
endomorphisms of a real vector space $\,\plane$.
\begin{thm}\label{liegp}For any connected Lie group\/ $\,G$, the 
left-in\-var\-i\-ant connections $\,\nabla\hn\,$ on\/ $\,G\,$ which are 
projectively flat in the sense of Section\/~{\rm\ref{pfco}} are in a bijective 
correspondence with pairs $\,(\Psi,f\hs)\hs$ formed by any Lie-al\-ge\-bra 
homomorphism\/ $\,\Psi:\mathfrak{g}\to\mathfrak{sl}\hh(\mathfrak{g})\,$ and 
any linear functional $\,f\in\mathfrak{g}^*\nnh$. For $\,u,v\in\mathfrak{g}$, 
this correspondence is given by 
$\,\nabla_{\hskip-2.2ptu}v=(\Psi u)\hh v+f(u)\hh v$, and\/ $\,\nabla\hn\,$ has 
the Ric\-ci tensor $\,\rho\,$ with $\,\rho\hs(\nh u,v)=f([u,v])$.
\end{thm}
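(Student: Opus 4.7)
My approach is to translate the geometric condition on $G$ into a purely algebraic identity in $\mathrm{End}(\mathfrak{g})$, then read off the correspondence from a single canonical direct-sum decomposition.

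First, I would use the standard fact that a left-invariant connection $\nabla$ on $G$ is determined by its action on left-invariant fields, giving a linear map $A\colon\mathfrak{g}\to\mathrm{End}(\mathfrak{g})$ with $A(u)v=\nabla_{u}v$ for $u,v\in\mathfrak{g}$; conversely, any such $A$ extends uniquely to a left-invariant connection via the Leibniz rule, since left-invariant fields trivialize $TG$. Applying the canonical splitting $\mathrm{End}(\mathfrak{g})=\mathfrak{sl}(\mathfrak{g})\oplus\bbR\,\mathrm{Id}$ at each argument gives a unique decomposition $A(u)=\Psi(u)+f(u)\,\mathrm{Id}$ with $\Psi(u)\in\mathfrak{sl}(\mathfrak{g})$ and $f(u)=(\mathrm{tr}\,A(u))/\dim G$. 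This establishes a bijective correspondence, on the level of vector spaces, between left-invariant connections and pairs $(\Psi,f)\in\mathrm{Hom}(\mathfrak{g},\mathfrak{sl}(\mathfrak{g}))\oplus\mathfrak{g}^{*}$, and reproduces the asserted formula $\nabla_{u}v=(\Psi u)v+f(u)v$.

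Next, I would compute the curvature from (\ref{cur}), applied to left-invariant $u,v,\psi$. Since the vector-field bracket $[u,v]$ then agrees with the Lie-algebra bracket, one obtains
\[
R(u,v)\psi\,=\,A(v)A(u)\psi\,-\,A(u)A(v)\psi\,+\,A([u,v])\psi\,=\,\bigl(A([u,v])-[A(u),A(v)]\bigr)\psi.
\]
Projective flatness, $R(u,v)=\rho(u,v)\,\mathrm{Id}$ for some (automatically left-invariant) $2$-form $\rho$, is therefore equivalent to the single identity $A([u,v])-[A(u),A(v)]=\rho(u,v)\,\mathrm{Id}$ in $\mathrm{End}(\mathfrak{g})$ for all $u,v\in\mathfrak{g}$. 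Since scalar operators commute with everything, substituting $A=\Psi+f\,\mathrm{Id}$ reduces the commutator to $[A(u),A(v)]=[\Psi(u),\Psi(v)]$, which lies automatically in $\mathfrak{sl}(\mathfrak{g})$. The projective-flatness condition therefore splits cleanly along the decomposition $\mathrm{End}(\mathfrak{g})=\mathfrak{sl}(\mathfrak{g})\oplus\bbR\,\mathrm{Id}$: the traceless part reads $\Psi([u,v])=[\Psi(u),\Psi(v)]$, saying precisely that $\Psi$ is a Lie-algebra homomorphism, while the scalar part reads $f([u,v])=\rho(u,v)$, which is the asserted formula. That $\rho$ is the Ricci tensor then follows from Lemma~\ref{rctoc} in the surface case (the only one actually needed in the paper), and from a direct trace calculation in (\ref{ope}.ii) in general.

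There is no genuine obstacle; the proof is a direct algebraic verification. The only delicate points are keeping track of the nonstandard sign in (\ref{cur}) (which turns the textbook commutator $[A(u),A(v)]$ into its negative here) and confirming that any linear $A\colon\mathfrak{g}\to\mathrm{End}(\mathfrak{g})$ really does extend to a smooth left-invariant connection on $G$, a routine consequence of the Leibniz rule.
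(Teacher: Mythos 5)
Your proposal is correct and follows essentially the same route as the paper: identify a left-invariant connection with a linear map $\mathfrak{g}\to\mathrm{End}(\mathfrak{g})$, split it via $\mathrm{End}(\mathfrak{g})=\mathfrak{sl}(\mathfrak{g})\oplus\bbR\,\mathrm{Id}$ into $(\Psi,f)$, compute $R(u,v)=f([u,v])\,\mathrm{Id}+\Psi[u,v]-[\Psi u,\Psi v]$ from (\ref{cur}), and equate traceless and scalar parts to get that projective flatness is exactly $\Psi$ being a Lie-algebra homomorphism, with $\rho(u,v)=f([u,v])$. The paper's proof is the same computation, so no further comparison is needed.
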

\begin{proof}A left-in\-var\-i\-ant connection $\,\nabla\hn\,$ on $\,G\,$ 
clearly amounts to a linear operator 
$\,\mathfrak{g}\ni u\mapsto\nabla_{\hskip-2.2ptu}$ valued in linear 
endomorphisms of $\,\mathfrak{g}$. Decomposing $\,\nabla_{\hskip-2.2ptu}$ into 
a traceless part and a multiple of $\,\mathrm{Id}\hh$, we obtain the formula 
$\,\nabla_{\hskip-2.2ptu}v=(\Psi u)\hh v+f(u)\hh v\,$ describing a 
bijective correspondence between left-in\-var\-i\-ant connections 
$\,\nabla\hn\,$ on $\,G\,$ and pairs $\,(\Psi,f\hs)$, in which 
$\,\Psi:\mathfrak{g}\to\mathfrak{sl}\hh(\mathfrak{g})\,$ is a linear operator, 
and $\,f\in\mathfrak{g}^*\nnh$. By (\ref{cur}), the curvature tensor of 
$\,\nabla\hn\,$ is given by 
$\,R\hh(\nh u,v)=\nabla_{\nh[u,v]}+\nabla_{\hskip-2.2ptv}\nabla_{\hskip-2.2ptu}
-\nabla_{\hskip-2.2ptu}\nabla_{\hskip-2.2ptv}$, cf.\ (\ref{ope}.i), for 
$\,u,v,w\in\mathfrak{g}$, that is, 
$\,R\hh(\nh u,v)=f([u,v])\hs\mathrm{Id}+\Psi[u,v]-[\Psi u,\Psi v]$, where the 
first two occurrences of $\,[\hskip2.5pt,\hskip1pt]\,$ represent the 
Lie\hs-al\-ge\-bra operation in $\,\mathfrak{g}$, and the last one stands for 
the commutator in $\,\hs\mathfrak{sl}\hh(\mathfrak{g})$. On the other hand, 
projective flatness of $\,\hs\nabla$ means that 
$\,R\hh(\nh u,v)=\rho\hs(\nh u,v)\hs\text{\rm Id}\,$ for some $\,2$-form 
$\,\rho\,$ (which must then coincide with the Ric\-ci tensor of $\,\nabla$). 
Equating the traceless parts of the last two expressions for 
$\,R\hh(\nh u,v)$, we see that $\,\nabla\hn\,$ is projectively flat if and only 
if $\,\Psi[u,v]=[\Psi u,\Psi v]\,$ for all $\,u,v\in\mathfrak{g}$, and then 
$\,\rho\hs(\nh u,v)=f([u,v])$. This completes the proof.
\end{proof}
Theorem~\ref{liegp} leads to an explicit description of all 
lo\-cal-e\-quiv\-a\-lence types of left-in\-var\-i\-ant connections 
$\,\nabla\hn\,$ with skew-sym\-met\-ric Ric\-ci tensor on two\hh\diml\ Lie 
groups $\,G$. It is convenient to distinguish three cases, based on the rank 
(dimension of the image) of the Lie\hs-al\-ge\-bra homomorphism $\,\Psi\,$ 
associated with $\,\nabla$, which assumes the values $\,0,1\,$ and $\,2$.
Note that the Lie algebras $\,\mathfrak{g}\,$ of the groups $\,G\,$ in 
question represent just two isomorphism types (Abelian and non\hs-Abel\-i\-an).

First, connections $\,\nabla\hn\,$ as above with 
$\,\mathrm{rank}\hskip3pt\Psi=0\,$ (that is, $\,\Psi=0$) are, by 
Theorem~\ref{liegp}, in a one-to-one correspondence with linear functionals 
$\,f\in\mathfrak{g}^*\nnh$.

Secondly, those of our connections having $\,\mathrm{rank}\hskip3pt\Psi=1\,$ 
are precisely the connections $\,\nabla\hn\,$ of the form 
$\,\nabla_{\hskip-2.2ptu}v=q(u)\hh Bv+f(u)\hh v$, for all 
$\,u,v\in\mathfrak{g}$, with any fixed 
$\,B\in\mathfrak{sl}\hh(\mathfrak{g})\smallsetminus\{0\}\,$ and 
$\,q,f\in\mathfrak{g}^*$ such that $\,q\ne 0\,$ and 
$\,\mathrm{Ker}\hh\,q\,$ contains the commutant ideal 
$\,[\mathfrak{g},\mathfrak{g}]$. (Note that 
$\,[\mathfrak{g},\mathfrak{g}]=\{0\}\,$ if $\,\mathfrak{g}\,$ is Abelian, and 
$\,\hs\dim\hskip2pt[\mathfrak{g},\mathfrak{g}]=1$ if it is not, while, for 
$\,q\,$ and $\,B\,$ which are both nonzero, $\,u\mapsto q(u)\hh B\,$ 
\hbox{is a Lie\hs-} al\-ge\-bra homomorphism 
$\,\mathfrak{g}\to\mathfrak{sl}\hh(\mathfrak{g})\,$ if and only if 
$\,[\mathfrak{g},\mathfrak{g}]\subset\mathrm{Ker}\hh\,q$.) 

Finally, the case $\,\mathrm{rank}\hskip3pt\Psi=2\,$ occurs only for 
non\hs-Abel\-i\-an $\,\mathfrak{g}\,$ %(Remark~\ref{sltwo} below). 
(by Theorem~\ref{lsasl}(i) in Appendix A).
Our connections 
then have the form 
$\,\nabla_{\hskip-2.2ptu}v=(\Psi u)\hh v+f(u)\hh v$, for 
$\,u,v\in\mathfrak{g}$, with $\,\Psi\,$ explicitly described as follows: 
$\,\Psi u=A\,$ and $\,\Psi v=B$, where $\,u,v\,$ is a fixed basis of 
$\,\mathfrak{g}\,$ with $\,[u,v]=u\,$ and 
$\,A,B\in\mathfrak{sl}\hh(\mathfrak{g})\,$ are given by
\begin{equation}\label{aww}
Aw\,=\,w\hh',\hskip13ptAw\hh'\hh=\,0\hs,\hskip13ptBw\,=\,w\nh/2\hs,\hskip13pt
Bw\hh'\hh=\,-\hs w\hh'\hskip-2.2pt/2\hs,
\end{equation}
in an arbitrary basis $\,w,w\hh'$ of $\,\mathfrak{g}$. See 
Theorem~\ref{lsasl}(ii) in Appendix A.

\section{Flat locally homogeneous connections}\label{flhc}
Let the Ric\-ci tensor $\,\rho\,$ of a connection $\,\nabla\hn\,$ on a surface 
$\,\bs\,$ be skew-sym\-met\-ric. In the open set $\,\,U\nh\,$ where 
$\,\rho\ne0$, the determinant bundle $\,[\tb]^{\wedge2}$ is trivialized by 
$\,\rho$, and so $\,\rho\,$ restricted to $\,\,U\,$ is {\it re\-cur\-rent\/} 
in the sense that $\,\nabla\nnh\rho=\phi\otimes\rho\,$ for some $\,1$-form 
$\,\phi\,$ defined just on $\,\,U\nh$. If $\,\,U\hn\,$ is nonempty, we call 
$\,\phi\,$ the {\it re\-cur\-rence form\/} of $\,\rho$. On $\,\,U\,$ one then 
has
\begin{equation}\label{rcr}
d\hs\phi\,=\,2\rho\hs.%\,+\,\phi\wedge\t\hs.
\end{equation}
In fact, the lo\-cal-co\-or\-di\-nate form 
$\,\rho_{jk,l}=\phi_{\hh l}\rho_{jk}$ of the re\-cur\-rence relation, combined 
with the Ric\-ci identity, gives $\,(\phi_{\hh l,m}-\phi_{m,l})\rho_{jk}
=\rho_{jk,lm}-\rho_{jk,ml}
=R_{mlj}{}^s\rho_{sk}+R_{mlk}{}^s\rho_{js}+\T_{\hs\!lm}^{\hh s}\rho_{jk,s}
=(2\rho_{ml}+\t_l\phi_m-\t_m\phi_{\hh l})\rho_{jk}$ (where $\,\t\,$ is the 
torsion $\,1$-form of $\,\nabla$), the last equality being immediate as 
$\,R_{mlj}{}^s=\rho_{ml}\delta_j^s$ in view of Lemma~\ref{rctoc}, and 
$\,\T_{\hs\!lm}^{\hh s}=\t_l\delta_m^s-\t_m\delta_l^s$ (cf.\ 
Section~\ref{prel}). Cancelling the factor $\,\rho_{jk}$ and noting that 
$\,\phi_{\hh l,m}-\phi_{m,l}=(d\hs\phi)_{ml}+\t_l\phi_m-\t_m\phi_{\hh l}$ (by 
(\ref{dxt}) and (\ref{dxi}.a)), we obtain (\ref{rcr}).

Garc\'\i a\hh-R\'\i o, Kupeli and V\'azquez\hh-Lorenzo 
\cite[p.\ 144]{garcia-rio-kupeli-vazquez-lorenzo} showed that a 
tor\-sion\-free connection with skew-sym\-met\-ric Ric\-ci tensor on a surface 
can be locally symmetric only if it is flat. By (\ref{rcr}), this remains true 
for connections with torsion.
\begin{lem}\label{linvt}For a connection $\,\nabla\hn\,$ on an $\,n$\diml\ 
manifold $\,\bs$, if%any of the following four conditions 
%the existence of any of the following ... objects implies that 
\begin{enumerate}
  \def\theenumi{{\rm\alph{enumi}}}
\item[(i)] $\nabla_{\hskip-2pte_j}e_k=\vg_{\hskip-2.7ptjk}^{\hs l}e_{\hh l}$ and 
$\,\T(e_j,e_k)=\T_{\!jk}^{\hh l}e_{\hh l}$ for some vector fields $\,e_1,\dots,e_n$ 
trivializing $\,\tb\,$ and some constants 
$\,\vg_{\hskip-2.7ptjk}^{\hs l},\T_{\!jk}^{\hh l}$, where\/ 
$\,j,k,l\in\{1,\dots,n\}$, repeated indices are summed over, and\/ $\,\T\,$ is 
the torsion tensor of $\,\nabla\nnh$, or
\item[(ii)] $\nabla\,$ is flat and has parallel torsion, or
\item[(iii)] $n=2\,$ and\/ $\,\nabla\hn\,$ is a part of a locally homogeneous 
triple\/ $\,(\nabla\hn,\hs\xi,g)\,$ also including a nonzero $\,1$-form\/ 
$\,\xi\,$ and a pseu\-\hbox{do\hs-}Riem\-ann\-i\-an metric\/ $\,g\,$ on\/ 
$\,\bs$,
\end{enumerate}
then\/ $\,\nabla\hn\,$ is locally equivalent to a left-in\-var\-i\-ant connection 
on some Lie group.
\end{lem}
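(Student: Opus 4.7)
My plan is to settle case (i) directly by constructing a local model on a Lie group, and then to reduce both (ii) and (iii) to (i) by exhibiting a frame of $\tb$ in which $\nabla$ and $\T$ have constant components. For (i), observe that
\begin{equation*}
[e_j,e_k]\,=\,\nabla_{e_j}e_k-\nabla_{e_k}e_j-\T(e_j,e_k)\,=\,c_{jk}^{\hh l}\hh e_l,
\end{equation*}
with $c_{jk}^{\hh l}=\vg_{\!jk}^{\hh l}-\vg_{\!kj}^{\hh l}-\T_{\!jk}^{\hh l}$ constant. The Jacobi identity for vector fields makes these the structure constants of an abstract $n$\diml\ Lie algebra $\mathfrak{g}$. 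Take a connected Lie group $G$ with Lie algebra $\mathfrak{g}$ and a left-invariant frame $\tilde e_1,\dots,\tilde e_n$ dual to the chosen basis. Integration of the Maurer--Cartan system with constant coefficients (equivalently, the standard uniqueness theorem for parallelisms) yields, about any chosen point of $\bs$, a diffeomorphism $F$ onto an open subset of $G$ with $F_*e_j=\tilde e_j$; the left-invariant connection $\tilde\nabla$ on $G$ determined by $\tilde\nabla_{\tilde e_j}\tilde e_k=\vg_{\!jk}^{\hh l}\tilde e_l$ then pulls back through $F$ to $\nabla$, as both have identical constant components in the corresponding frames.

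Case (ii) is immediate: local flatness provides a $\nabla$-parallel frame $e_1,\dots,e_n$, in which $\vg_{\!jk}^{\hh l}=0$; then $\nabla\T=0$ combined with $\nabla e_l=0$ forces the components $\T_{\!jk}^{\hh l}$ of $\T(e_j,e_k)=\T_{\!jk}^{\hh l}e_l$ to be constants, reducing the setting to (i).

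For (iii) I would build a frame intrinsically from $(\xi,g)$ and invoke local homogeneity. Put $u=g^{-1}(\xi)$: this vector field is nowhere zero, invariant under every local symmetry of the triple, and the scalar $g(u,u)$ is a symmetry invariant, hence constant on $\bs$. In the non-null subcase $g(u,u)\ne 0$, the $g$-orthogonal complement of $u$ is a one-dimensional line and $v$ can be chosen, on a small connected neighborhood, as the vector field $g$-orthogonal to $u$ with $g(v,v)$ equal to a canonical nonzero constant (a fixed function of $g(u,u)$); this $v$ is determined up to a sign. In the null subcase $g(u,u)=0$, take $v$ to be the unique vector field with $g(u,v)=1$ and $g(v,v)=0$, uniqueness following from $g(v+tu,v+tu)=2t$. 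In both subcases $\{u,v\}$ is determined by $(\xi,g)$ up to at most a $\bbZ/2$ sign and therefore preserved by every local symmetry of the triple; combined with preservation of $\nabla$ and transitivity of the symmetry pseudogroup, this forces the Christoffel symbols $\vg_{\!jk}^{\hh l}$ and torsion components $\T_{\!jk}^{\hh l}$ of $\nabla$ in the frame $\{u,v\}$ to be constants, placing us back in case (i). The main obstacle is precisely this step: constructing the invariant frame uniformly across the non-null and null causal types of $u$, and then deducing constancy of the component functions from transitivity of the pseudogroup; by contrast, the passage from constant structure coefficients to a local Lie-group model in case (i) is a routine corollary of Lie's theorems.
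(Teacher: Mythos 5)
Your proposal is correct and follows essentially the same route as the paper: case (i) by noting the brackets $[e_j,e_k]$ have constant coefficients and identifying the frame, via integration of the resulting parallelism, with a left\hyp invariant frame on a Lie group (the paper does this through its Appendix~B theorem), case (ii) reduced to (i) by a parallel frame, and case (iii) by building the same natural frame $u=g^{-1}(\xi)$, $v$ (with the identical null/non\hyp null normalizations, $v$ determined up to sign) and using local homogeneity of the triple to make the component functions constant. The only differences are cosmetic (your normalization of $g(v,v)$ versus $|g(v,v)|=1$, and quoting the Maurer--Cartan/parallelism integration instead of the paper's Appendix~B), and your treatment of the residual sign ambiguity in (iii) is at the same level of detail as the paper's ``fixing, locally, the sign.''
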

\begin{proof}As 
$\,[u,v]=\nabla_{\hskip-2.2ptu}v-\nabla_{\hskip-2.2ptv}u-\T(\nh u,v)\,$ for 
vector fields $\,u,v$, our $\,e_j$ in (i) span a Lie algebra 
$\,\mathfrak{h}\,$ of vector fields, trivializing $\,\tb$, and so (i) follows 
from Theorem~\ref{latri} in Appendix B. Next, if $\,\nabla\nh\,$ is flat and 
$\,\nabla\T=0$, choosing, locally, $\,\nabla\nh$-par\-al\-lel vector fields 
$\,e_1,\dots,e_n$ trivializing $\,\tb$, we see that the assumptions in (i) 
hold with $\,\vg_{\hskip-2.7ptjk}^{\hs l}=0$, and so (i) implies (ii). 
Finally, let $\,(\nabla\hn,\hs\xi,g)\,$ be as in (iii), with $\,n=2$. We 
denote by $\,u\,$ the unique vector field with $\,g(\nh u,\,\cdot\,)=\xi$. A 
second vector field $\,v\,$ is defined by $\,g(v,v)=0\,$ and 
$\,g(\nh u,v)=1\,$ (if $\,g(\nh u,u)=0$), or  $\,|\hs g(v,v)|=1\,$ and 
$\,g(\nh u,v)=0\,$ (if $\,g(\nh u,u)\ne0$); in the latter case, $\,v\,$ is 
determined only up to a sign. In both cases, for reasons of naturality, the 
triple $\,(\nabla\hn,u,\pm v)\,$ is locally homogeneous, and so, since, 
locally, $\,u\,$ and $\,\pm v\,$ trivialize $\,\tb$, the covariant derivatives 
$\,\nabla_{\hskip-2.2ptu}u,\hs\pm\nabla_{\hskip-2.2ptu}v,
\hs\pm\nabla_{\hskip-2.2ptv}u,\hs\nabla_{\hskip-2.2ptv}v$, as well as 
$\,\T(\nh u,v)$, are linear combinations of $\,u\,$ and $\,\pm v\,$ with 
constant coefficients. Fixing, locally, the sign $\,\pm\hs$, we obtain (iii) 
from (i) for $\,e_1=u\,$ and $\,e_2=\pm v$.
\end{proof}
\begin{rem}\label{lhlie}Every locally homogeneous tor\-sion\-free connection 
on a surface is locally equivalent either to the Le\-vi-Ci\-vi\-ta connection 
of the standard sphere, or to a left-in\-var\-i\-ant connection on a Lie 
group. This is obvious from O\-poz\-da's local classification of such 
connections \cite[Theorem 1.1]{opozda}: for $\,u,v,U,V\,$ as in 
\cite[formula (1.4)]{opozda}, we may apply Lemma~\ref{linvt}(i) to 
$\,e_1=u\hh U\,$ and $\,e_2=vV$, while left-in\-var\-i\-ant 
pseu\-\hbox{do\hs-}Riem\-ann\-i\-an metrics on a two\hh\diml\ 
non\hs-Abel\-i\-an Lie group realize all non\hh-flat con\-stant-cur\-va\-ture 
metric types other than the standard sphere.
\end{rem}
\begin{ex}\label{yesli}Let the tangent bundle $\,\tb\,$ of a simply connected 
surface $\,\bs\,$ be trivialized by vector fields $\,u,v\,$ such that 
$\,[u,v]=2(u-v)$. The $\,1$-form $\,\xi\,$ on $\,\bs\,$ with 
$\,\xi(u)=\xi(v)=4\,$ is closed, as $\,(d\hs\xi)(\nh u,v)=0\,$ by 
(\ref{dxi}.c). For any fixed function $\,\varphi:\bs\to\bbR\,$ with 
$\,\xi=d\hs\varphi$, the connection $\,\nabla\hn\,$ on $\,\bs\,$ with
\begin{equation}\label{cnc}
\nabla_{\hskip-2.2ptu}u=u\hs,\hskip12pt\nabla_{\hskip-2.2ptu}v
=-\hs v\hs,\hskip12pt\nabla_{\hskip-2.2ptv}u=u+v\hs,\hskip12pt
\nabla_{\hskip-2.2ptv}v=e^{-\hh\varphi}u-v\hs,
\end{equation}
is locally equivalent to a left-in\-var\-i\-ant connection on a Lie group, 
although, as $\,\hs\varphi$ is nonconstant, this is not immediate from 
Lemma~\ref{linvt}(i) for $\,e_1=u\,$ and $\,e_2=v$.

In fact, the $\,1$-form $\,\eta\,$ on $\,\bs\,$ with $\,\eta(u)=0\,$ and 
$\,\eta(v)=e^{-\hh\varphi/2}$ is closed, as $\,d_u\varphi=d_v\varphi=4$, 
and so (\ref{dxi}.c) gives $\,(d\eta)(\nh u,v)=0$. Choosing a function 
$\,\gamma$ with $\,d\gamma=\eta\,$ and setting 
$\,\chi=e^{-\hh\varphi/2}\tanh\gamma$, we have $\,d_u\chi=-2\chi\,$ and 
$\,d_v\chi=\chi^2-2\chi-e^{-\hh\varphi}\nnh$. Now, for $\,w=v+\chi u$, the 
definition of $\,\nabla\hn\,$ yields $\,\nabla_{\hskip-2.2ptu}u=u$, 
$\,\nabla_{\hskip-2.7ptw}u=u+w\,$ and 
$\,\nabla_{\hskip-2.2ptu}w=\nabla_{\hskip-2.7ptw}w=-\hs w$. Our claim now 
follows from Lemma~\ref{linvt}(i) with $\,e_1=u\,$ and $\,e_2=w$, as 
$\,\T(\nh u,w)=\T(\nh u,v)=\nabla_{\hskip-2.2ptu}v
-\nabla_{\hskip-2.2ptv}u-[u,v]=-3u$.

We will not use the eas\-i\-ly-ver\-i\-fied fact that $\,\nabla\hn\,$ is flat.
\end{ex}
\begin{lem}\label{lieor}Let\/ $\,(\mathrm{D}\hh,\hs\xi)\,$ be a locally 
homogeneous pair consisting of a flat connection $\,\mathrm{D}\hn\,$ on a 
surface $\,\bs\hn\,$ and a $\,1$-form\/ $\,\xi\hn\,$ on\/ $\,\bs$. Suppose 
that\/ $\,\mathrm{D}\hn\,$ is tor\-sion\-free, or\/ $\,\xi\hn\,$ is the 
torsion $\,1$-form of\/ $\,\mathrm{D}$, cf.\ Section~\ref{prel}. Decomposing 
the $\,2$-ten\-sor\/ $\,\hs\mathrm{D}\hs\xi$ uniquely as\/ 
$\,\mathrm{D}\hs\xi=\om+g$, where\/ $\,\om\,$ is skew-sym\-met\-ric and\/ 
$\,g\,$ is symmetric, we then have\/ $\,d\hs\xi=2\hh\om$. Furthermore, unless
\begin{enumerate}
  \def\theenumi{{\rm\alph{enumi}}}
\item[(a)] $\mathrm{D}\,$ is locally equivalent to a left-in\-var\-i\-ant 
connection on a Lie group\/ $\,G\,$ in such a way that\/ $\,\xi\,$ 
corresponds to a left-in\-var\-i\-ant\/ $\,1$-form on\/ $\,G$,
\end{enumerate}
the following three conditions must be satisfied\/{\rm:}
\begin{enumerate}
  \def\theenumi{{\rm\alph{enumi}}}
\item[(b)] $\xi\,$ and\/ $\,\mathrm{D}\hs\xi\,$ are nonzero everywhere,
\item[(c)] $\mathrm{D}\hs\xi\nh=\hs\om-\hh c\hskip1.5pt\xi\otimes\xi$ for some 
constant\/ $\,c\hh$,
\item[(d)] the $\,2$-form\/ $\,\om=d\hs\xi/2\,$ is parallel and nonzero.
\end{enumerate}
\end{lem}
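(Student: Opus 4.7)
The plan is to proceed in three stages: verify $d\xi=2\om$ directly from (\ref{dxt}); then, assuming (a) fails, use the fact that local homogeneity forces every naturally-constructed tensor to have constant algebraic type on $\bs$ to obstruct each natural-metric construction (any successful such construction would reduce to Lemma~\ref{linvt}(iii), contradicting the failure of (a)); finally, exploit flatness of an induced line-bundle connection to close out~(d). For the identity, I apply~(\ref{dxt}) with $\tau$ the torsion form of $\mathrm{D}$: both allowed cases give $\tau\wedge\xi=0$ ($\tau=0$ in the torsionfree case; $\tau=\xi$, so $\tau\wedge\xi=\xi\wedge\xi=0$ in the other), leaving $d\xi=\mathrm{D}\xi-(\mathrm{D}\xi)^{*}=2\om$ after the symmetric parts cancel.

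Now assume (a) fails. \emph{For (b):} if $\xi\equiv 0$, the torsion of $\mathrm{D}$ vanishes in both allowed cases and Lemma~\ref{linvt}(ii) gives (a); if $\mathrm{D}\xi\equiv 0$, a parallel frame for flat $\mathrm{D}$ has vanishing Christoffel symbols, constant components of $\xi$, and thus constant torsion components (those of $0$ or $\xi\wedge\mathrm{Id}$), so Lemma~\ref{linvt}(i) again gives (a)---contradictions in both subcases. \emph{For (c):} decompose $\mathrm{D}\xi=\om+g_{\mathrm{sym}}$; if $g_{\mathrm{sym}}$ is nondegenerate, apply Lemma~\ref{linvt}(iii) to the natural, locally homogeneous triple $(\mathrm{D},\xi,g_{\mathrm{sym}})$; if $g_{\mathrm{sym}}$ has rank one with $\ker g_{\mathrm{sym}}\ne\ker\xi$, the natural tensor $g_{\mathrm{sym}}+\xi\otimes\xi$ is nondegenerate (a sum of two rank-one symmetric forms with distinct kernels) and the same lemma applies. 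The only remaining possibility is $g_{\mathrm{sym}}=-c\hs\xi\otimes\xi$, and naturality of $c$ together with local transitivity forces $c$ to be a constant.

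\emph{For (d):} the delicate step is $\om\ne 0$. If $\om=0$, then $d\xi=0$, so locally $\xi=df$ and $\mathrm{D}\xi=-c\hs\xi\otimes\xi$ with $c\ne 0$ by (b); a direct check using $df=\xi$ gives $\mathrm{D}(e^{cf}\xi)=0$. In a parallel frame for $\mathrm{D}$ adapted to the parallel 1-form $e^{cf}\xi$, the pair $(\mathrm{D},\xi)$ acquires an explicit normal form (standard flat $\mathrm{D}$ with $\xi=dx^{1}/(cx^{1})$ on a half-plane in the torsionfree case, with an analogous model in the other), which visibly admits a simply-transitive $2$-dimensional Lie-group action preserving $(\mathrm{D},\xi)$---placing us in (a), a contradiction. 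With $\om\ne 0$, introduce the natural recurrence 1-form $\phi$ by $\mathrm{D}\om=\phi\otimes\om$: if $\phi$ and $\xi$ were linearly independent at a point, the natural Riemannian metric $\xi\otimes\xi+\phi\otimes\phi$ would trigger Lemma~\ref{linvt}(iii) and yield (a), so $\phi=\lambda\xi$ with $\lambda$ constant (by local homogeneity). The finishing step invokes Remark~\ref{dtbdl}: flatness of $\mathrm{D}$ gives $\ctr=0$, whence the induced connection on the line bundle $[T^{*}\bs]^{\wedge 2}$ is flat, i.e.\ $d\phi=0$; thus $0=d(\lambda\xi)=\lambda\,d\xi=2\lambda\om$, and $\om\ne 0$ forces $\lambda=0$, making $\om$ $\mathrm{D}$-parallel.

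The main obstacle I anticipate is the exclusion of $\om=0$: local homogeneity alone does not rule this out, and one must explicitly exhibit the Lie-group action on the normal form, separately in each of the two sub-cases. The rest of the proof is a clean dichotomy between ``construct a natural nondegenerate symmetric 2-form and invoke Lemma~\ref{linvt}(iii)'' and ``the obstruction to such a construction is precisely the algebraic rigidity asserted in (b)--(d).''
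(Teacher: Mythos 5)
Your argument follows the paper's own strategy almost step for step: $d\hs\xi=2\hh\om$ comes from (\ref{dxt}) exactly as in the paper, and, once (a) is assumed to fail, conditions (b), (c) and the parallelism of $\,\om\,$ are forced by the same dichotomy -- build a natural pseudo-Riemannian metric out of $\,\xi$, the symmetric part of $\,\mathrm{D}\hs\xi$, or the recurrence $1$-form of $\,\om$, and invoke Lemma~\ref{linvt}(iii). Your recurrence computation (the recurrence form equals $\,\lambda\hs\xi\,$ with $\,\lambda\,$ constant, it is closed because the connection induced in $\,[\tab]^{\wedge2}$ is flat, hence $\,2\lambda\hh\om=0$) is the paper's, only run after, rather than before, the proof that $\,\om\ne0$; that reordering is harmless.

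The one genuine gap is the step you yourself flag: ruling out $\,\om=0\,$ when $\,\xi\,$ is the torsion $1$-form of $\,\mathrm{D}$. Your half-plane model ($\mathrm{D}\,$ standard flat, $\,\xi=dx^1\!/(c\hs x^1)$, invariant under $\,(x^1,x^2)\mapsto(a\hs x^1,x^2+b)$) does settle the torsionfree sub-case, but in the torsion sub-case $\,\mathrm{D}\,$ is not the standard affine connection, so no ready-made model with a visible simply transitive action is exhibited, and ``analogous'' is not a proof. The paper closes both sub-cases at once without any explicit group action: with $\,\eta=e^{c\varphi}\xi\,$ parallel (where $\,d\varphi=\xi$), it chooses $\,\mathrm{D}$-parallel fields $\,u,v\,$ with $\,\eta(u)=1$, $\,\eta(v)=0\,$ and sets $\,w=e^{c\varphi}u$; then $\,\mathrm{D}_vv=\mathrm{D}_wv=\mathrm{D}_vw=0$, $\,\mathrm{D}_ww=c\hs w$, and $\,\T(v,w)=0\,$ or $\,-\hs v$, so Lemma~\ref{linvt}(i) applies to the frame $\,v,w\,$ in either sub-case, and since $\,\xi\,$ has constant components $\,\xi(v)=0$, $\,\xi(w)=1\,$ in that frame, it corresponds to a left-invariant form, giving (a). Adopting this rescaled-frame device would make your proof complete. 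A smaller point, affecting the paper as much as you: every passage from the conclusion of Lemma~\ref{linvt} (which concerns only the connection) to the full condition (a) tacitly uses that $\,\xi\,$ has constant components in the natural frame involved; this holds in all your applications, but deserves a sentence.
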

\begin{proof}By (\ref{dxt}), $\,d\hs\xi=2\hh\om$, as $\,\t\wedge\hs\xi=0$. 
Suppose that (a) does not hold. This gives (b): the case where 
$\,\mathrm{D}\,$ is tor\-sion\-free and $\,\mathrm{D}\hs\xi=0\,$ is 
excluded since it would imply (a) with an Abelian group $\,G$, while 
the case of parallel torsion would lead to (a) in view of 
Lemma~\ref{linvt}(ii). 

The rank of $\,g$, the symmetric part of $\,\mathrm{D}\hs\xi$, is 
constant on $\,\bs\,$ and equal to $\,0,1\,$ or $\,2$. If (a) fails, 
we must have $\,\mathrm{rank}\hskip3ptg\le1$, and so, locally, 
$\,g=\pm\hs\eta\otimes\eta\,$ for some $\,1$-form $\,\eta$. In fact, 
if $\,g\,$ were of rank $\,2$, Lemma~\ref{linvt}(iii) applied to 
the triple $\,(\mathrm{D}\hh,\hs\xi,g)\,$ would yield (a) (as 
$\,\xi\ne0\,$ by (b)). Furthermore, $\,\eta\,$ must be a constant 
multiple of $\,\xi$, for if it were not, Lemma~\ref{linvt}(iii) for 
the triple $\hs(\mathrm{D}\hh,\hs\xi,g+\xi\otimes\xi)\hs$ would imply (a) 
again. This gives (c).

Still assuming that (a) is not satisfied, we will now prove (d). 
Namely, if $\,\om=d\hs\xi/2\,$ were not parallel, $\,\om\,$ would be nonzero 
everywhere due to local homogeneity of $\,(\mathrm{D},\xi)$. Hence 
$\,\om\,$ would be re\-cur\-rent, in the sense that 
$\,\mathrm{D}\hh\om=\zeta\otimes\om\,$ for some nonzero 
$\,1$-form $\,\zeta\,$ (cf.\ the lines preceding (\ref{rcr})). As 
$\,\mathrm{D}\,$ is flat, so is the connection induced by $\,\mathrm{D}\,$ 
in the bundle $\,[\tab]^{\wedge2}\nnh$. Thus, locally, 
$\,e^{-\chi}\nnh\om\,$ is parallel for some function $\,\chi$, and so 
$\,\mathrm{D}\hh\om=d\chi\otimes\om$. It would now follow that 
$\,\zeta=d\chi\,$ and $\,d\hs\zeta=0$. However, since we assumed that 
$\,\om=d\hs\xi/2\,$ is {\it not\/} parallel, $\,\xi\,$ cannot be a 
constant multiple of $\,\zeta$, so that Lemma~\ref{linvt}(iii) 
applied to $\,(\mathrm{D}\hh,\hs\xi,\hs\xi\otimes\xi+\zeta\otimes\zeta)\,$ 
would give (a) as before. Thus, $\,\om=d\hs\xi/2\,$ is parallel.

To show that $\,\om\ne0$, suppose, on the contrary, that $\,\om=d\hs\xi/2\,$ 
vanishes identically (and (a) does not hold). Choosing, locally, a 
function $\,\varphi\,$ with $\,d\hs\varphi=\xi$, we can now rewrite the 
equality $\,\mathrm{D}\hs\xi=-c\hskip1.5pt\xi\otimes\xi\,$ (cf.\ (c)) as 
$\,\mathrm{D}\eta=0\,$ for $\,\eta=e^{c\hh\varphi}\xi$. Since 
$\,\mathrm{D}\,$ is flat, we may now select, locally, 
$\,\mathrm{D}\hs$-par\-al\-lel vector fields 
$\,u,v\,$ with $\,\eta(u)=1\,$ and $\,\eta(v)=0$. Setting 
$\,w=e^{c\hh\varphi}u\,$ we have 
$\,\mathrm{D}_vv=\mathrm{D}_wv=\mathrm{D}_vw=0\,$ and 
$\,\mathrm{D}_ww=c\hh w$, since $\,\mathrm{D}u=\mathrm{D}v=0$, while 
$\,d\hs\varphi=\xi=e^{-c\hh\varphi}\eta$, so that 
$\,d_u\varphi=e^{-c\hh\varphi}$ and $\,d_v\varphi=0$. Similarly, as the 
torsion tensor $\,\T\,$ of $\,\mathrm{D}\,$ equals $\,0\,$ (when 
$\,\mathrm{D}\,$ is tor\-sion\-free), or $\,\T=\xi\wedge\mathrm{Id}\,$ (when 
$\,\xi\,$ is the torsion 
$\,1$-form of $\,\mathrm{D}$), we get $\,\T(v,w)=0$ or, respectively, 
$\,\T(v,w)=-\xi(w)\hh v=-\hh\eta(u)\hh v=-\hs v$. Lemma~\ref{linvt}(i) with 
$\,n=2$, $\,e_1=v\,$ and $\,e_2=w\,$ now yields (a), and the resulting 
contradiction proves (d).
\end{proof}
\begin{lem}\label{flnli}Every flat locally homogeneous connection\/ 
$\,\nabla\hn\,$ on a surface $\,\bs\,$ is locally equivalent to a 
left-in\-var\-i\-ant connection on some Lie group.
\end{lem}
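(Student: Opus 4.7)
The plan is to apply Lemma~\ref{lieor} to the pair $(\nabla,\t)$, where $\t$ is the torsion $1$-form of $\nabla$, and in each resulting subcase reduce to Lemma~\ref{linvt}. Local homogeneity of $\nabla$ forces that of $(\nabla,\t)$, since $\t$ is canonically determined by $\nabla$, and the hypothesis in Lemma~\ref{lieor} that $\xi$ be the torsion $1$-form of $\mathrm{D}$ is satisfied with $\mathrm{D}=\nabla$ and $\xi=\t$. If $\t\equiv0$, then $\nabla$ is torsionfree and its torsion tensor is trivially parallel, so Lemma~\ref{linvt}(ii) applies at once. Otherwise $\t\ne0$ everywhere by local homogeneity, and Lemma~\ref{lieor} yields either its case (a) --- which is exactly the conclusion of the present lemma --- or case (b)--(d).

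In case (b)--(d) we have $\nabla\t=\omega-c\,\t\otimes\t$ for some constant $c$, with $\omega=d\t/2$ parallel and nonzero. To handle this, I would pick a local $\nabla$-parallel frame $(e_1,e_2)$, which exists by flatness, and set $f_j=\t(e_j)$ and $\omega_{12}=\omega(e_1,e_2)$, the latter a nonzero constant since $\omega$ is parallel. Relation (c) then becomes the PDE system $e_j f_k=\omega_{jk}-c\,f_j f_k$, while $\nabla e_i=0$ together with $\T=\t\wedge\mathrm{Id}$ gives $[e_1,e_2]=f_2 e_1-f_1 e_2$. Computing $[e_1,e_2]f_1$ in two ways, once from this bracket formula and once as $e_1(e_2 f_1)-e_2(e_1 f_1)$ by the PDEs, produces the identity $(3c+1)\,\omega_{12}\,f_1=0$. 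Adjusting the parallel frame by a constant linear change so that $f_1\ne0$ near the chosen point (possible since $\t\ne0$) then forces $c=-1/3$.

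Finally, setting $u=f_2 e_1-f_1 e_2$ and $v=e_1/f_1$, a direct computation using the PDEs with $c=-1/3$ yields
\[
\nabla_u u=\omega_{12}\,u,\ \ \nabla_u v=-\omega_{12}\,v,\ \ \nabla_v u=u/3+\omega_{12}\,v,\ \ \nabla_v v=-v/3.
\]
Since $\t(u)=0$ and $\t(v)=1$, the torsion $\T(u,v)=-u$ is also a constant combination of $u$ and $v$, so Lemma~\ref{linvt}(i) applied to the local frame $(u,v)$ completes the proof. The principal obstacle is precisely this last subcase (b)--(d): flatness furnishes the parallel frame $(e_1,e_2)$, but the Christoffel symbols of $\nabla$ in it are not constant, so the non-obvious replacement frame $(u,v)$ has to be constructed by hand, in the spirit of the frame change carried out in Example~\ref{yesli}.
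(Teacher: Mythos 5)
Your proof is correct, and its skeleton is the paper's: both arguments feed the pair $(\nabla,\theta)$, $\theta$ the torsion form, into Lemma~\ref{lieor}, settle the case when its alternative (a) holds, and otherwise use (b)--(d) to pin down $c=-1/3$ and then build a frame to which Lemma~\ref{linvt}(i) applies. Your derivation of $c=-1/3$ from computing $[e_1,e_2]f_1$ in two ways is the same computation the paper performs in the guise of $0=(d\hs\zeta)(w,w\hh')=-(3\hh c+1)P$ with $\zeta=dP$, and your separate treatment of $\theta\equiv0$ via Lemma~\ref{linvt}(ii) is harmless (it is already covered by Lemma~\ref{lieor}(a), since (b) fails there). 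Where you genuinely diverge is the endgame. The paper's frame $u=Qw-Pw\hh'$, $v=\frac{3}{2}(P^{-1}w+Q^{-1}w\hh')$ does \emph{not} have constant coefficients (note the factor $e^{-\hh\varphi}$ in (\ref{cnc})), which is why the paper must pass through Example~\ref{yesli}, integrating two closed $1$-forms and making the $\tanh\gamma$ substitution before Lemma~\ref{linvt}(i) can be invoked. Your frame $u=f_2e_1-f_1e_2$, $v=e_1/f_1$ bypasses all of that: with $c=-1/3$ one checks $d_uf_1=\omega_{12}f_1$ and $d_uf_2=\omega_{12}f_2$, whence $\nabla_{\hskip-2.2ptu}u=\omega_{12}\hh u$, $\nabla_{\hskip-2.2ptu}v=-\hs\omega_{12}\hh v$, $\nabla_{\hskip-2.2ptv}u=u/3+\omega_{12}\hh v$, $\nabla_{\hskip-2.2ptv}v=-\hs v/3$, with $\omega_{12}$ constant (as $\omega$ \emph{and} the frame are parallel), $u,v$ linearly independent since $u\wedge v=e_1\wedge e_2$, and $\T(\nh u,v)=-\hs u$; so Lemma~\ref{linvt}(i) applies at once, and Example~\ref{yesli} becomes unnecessary for this lemma. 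The paper's longer route records the normal form (\ref{cnc}) along the way, while yours is shorter and more direct. One cosmetic remark: your identity holds for each index, i.e.\ $(3\hh c+1)\hs\omega_{12}f_j=0$ for $j=1,2$, and $\theta\ne0$ already forces some $f_j\ne0$, so the preliminary constant change of parallel frame is needed only to make $v=e_1/f_1$ well defined, not to conclude $c=-1/3$.
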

\begin{proof}Let $\,\t\,$ be the torsion form of $\,\nabla\hn\,$ (see 
Section~\ref{prel}). We assume that conditions (b) -- (d) in 
Lemma~\ref{lieor} are satisfied by $\,\mathrm{D}=\nabla\,$ and $\,\xi=\t$, 
since otherwise our assertion follows from Lemma~\ref{lieor}(a). Thus, 
$\,\nabla\t=\hs\om-\hh c\hskip1.5pt\t\otimes\t\,$ for the nonzero 
parallel $\,2$-form $\,\om=d\hs\t/2\,$ and a constant $\,c\hh$.

As $\,\nabla\om=0$, while $\,\om\ne0\,$ and $\,\t\ne0$, on some 
neighborhood $\,\,U\,$ of any given point of $\,\bs,$ we may choose 
$\,\nabla\nh$-par\-al\-lel vector fields $\,w,w\hh'$ such that 
$\,\hs\om(w,w\hh'\hh)=1$ and $\,\t(w)\hh\t(w\hh'\hh)\ne0\,$ 
everywhere in $\,\,U\nh$. Let us now define functions 
$\,P,Q,\varphi$, vector fields $\,u,v$, and $\,1$-forms 
$\,\zeta,\eta,\hs\xi\,$ on $\,\,U\,$ by $\,P=\t(w)$, $\,Q=\t(w\hh'\hh)$, 
$\,\varphi=2\hs\log\hs|PQ\hh|$, $\,u=Qw-Pw\hh'\nnh$, 
$\,v=3\hh(P^{-1}w+Q^{-1}w\hh'\hh)/2$, 
$\,\zeta=dP$, $\,\eta=d\hh Q$, and $\,\xi=d\hs\varphi$. We have
\begin{equation}\label{zwe}
3\hs\zeta(w)=P^2,\hskip8pt3\hs\zeta(w\hh'\hh)=PQ-3,\hskip8pt
3\hh\eta(w)=PQ+3,\hskip8pt3\hh\eta(w\hh'\hh)=Q^2.
\end{equation}
In fact, the relation $\,\nabla\t=\hs\om-\hh c\hskip1.5pt\t\otimes\t\,$ 
evaluated on the pairs 
$\,(w,w),\hs(w,w\hh'\hh),\hs(w\hh'\nnh,w)$ and $\,(w\hh'\nnh,w\hh'\hh)\,$ 
yields $\,\zeta(w)=-\hs c\hs P^2\nnh$, $\,\zeta(w\hh'\hh)=-\hs c\hs PQ-1$, 
$\,\eta(w)=-\hs c\hs PQ+1$, $\,\eta(w\hh'\hh)=-\hs c\hs Q^2\nnh$. As 
$\,d\hs\zeta=ddP=0$, (\ref{dxi}.c) gives 
$\,0=(d\hs\zeta)(w,w\hh'\hh)=-\hs(3\hh c+1)\hh P$, and hence 
$\,c=-1/3$, so that the preceding equalities become (\ref{zwe}). (As 
$\,\T=\t\wedge\mathrm{Id}\hh$, cf.\ Section~\ref{prel}, 
$\,[w,w\hh'\hh]=\T(w\hh'\nnh,w)=Qw-Pw\hh'\nnh$.) Combining (\ref{zwe}) with 
the relations $\,\xi=d\hs\varphi=2\hs(Q^{-1}\eta+P^{-1}\zeta)\,$ and 
$\,\nabla w=\nabla w\hh'\nh=0$, we get $\,\xi(u)=\xi(v)=4\,$ and 
(\ref{cnc}) (for our $\,\nabla,u,v,\hh\varphi$). Our assertion now follows 
from Example~\ref{yesli}.
\end{proof}

\section{The general locally homogeneous case}\label{glhc}
The following theorem, combined with the discussion, at the end of 
Section~\ref{lclg}, of certain left-in\-var\-i\-ant connections on Lie groups, 
completes our description of all locally homogeneous connections with 
skew-sym\-met\-ric Ric\-ci tensor on surfaces.
\begin{thm}\label{genrl}Every locally homogeneous connection\/ $\,\nabla\hn\,$ 
with skew-sym\-met\-ric Ric\-ci tensor on a surface $\,\bs\,$ is locally 
equivalent to a left-in\-var\-i\-ant connection on a Lie group.
\end{thm}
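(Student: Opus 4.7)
Plan. The argument splits on whether $\rho$ vanishes identically. If $\rho\equiv0$, Lemma~\ref{rctoc} gives $R=0$, so $\nabla$ is flat and Lemma~\ref{flnli} concludes. Otherwise $\rho\neq 0$ somewhere and, by local homogeneity, $\rho\neq 0$ everywhere on $\bs$.

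Under this second assumption, the recurrence form $\phi$ of $\rho$, defined by $\nabla\rho=\phi\otimes\rho$, is canonical (hence locally homogeneous with $\nabla$) and satisfies $d\phi=2\rho$ by (\ref{rcr}). Setting $\xi=\phi/2$ and $\mathrm{D}=\nabla+\xi\otimes\mathrm{Id}$, Corollary~\ref{dxtna} makes $\mathrm{D}$ flat, with the pair $(\mathrm{D},\xi)$ locally homogeneous. When $\nabla$ is torsion-free, Remark~\ref{torfm} shows that $\xi$ is the torsion form of $\mathrm{D}$, so Lemma~\ref{lieor} applies to $(\mathrm{D},\xi)$: conclusion (a) yields $\nabla=\mathrm{D}-\xi\otimes\mathrm{Id}$ left-in\-var\-i\-ant on a Lie group $G$ at once, while conclusions (b)--(d) lead, via the explicit frame construction from the proof of Lemma~\ref{flnli}, to an application of Lemma~\ref{linvt}(i).

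When the torsion form $\t$ of $\nabla$ is nonzero, the plan is to apply Lemma~\ref{linvt}(iii) to the canonical locally homogeneous triple $(\nabla,\t,g)$ with $g=\t\otimes\phi+\phi\otimes\t$; this $g$ is a pseu\-\hbox{do\hs-}Riem\-ann\-i\-an metric exactly when $\t$ and $\phi$ are linearly independent at every point of $\bs$. The main obstacle is the degenerate subcase $\phi=\lambda\t$, in which local homogeneity forces $\lambda$ to be a constant (in fact $\lambda=2/c$, where $d\t=c\rho$ for the constant $c$ given by the homogeneity of $d\t$ and $\rho$). In that subcase one should replace $\phi$ with a canonical tensor built from higher-order covariant derivatives, such as the symmetric part of $\nabla\t$; a short case analysis in the spirit of the end of the proof of Lemma~\ref{lieor} then either produces a suitable nondegenerate $g$ for Lemma~\ref{linvt}(iii), or constructs a canonical trivializing frame to which Lemma~\ref{linvt}(i) applies.
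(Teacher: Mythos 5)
Your first two branches coincide in substance with the paper's proof: the flat (here $\rho\equiv0$) case is referred to Lemma~\ref{flnli}, and in the non\hh-flat case you form $\xi=\phi/2$, pass to the flat, locally homogeneous connection $\mathrm{D}=\nabla+\xi\otimes\mathrm{Id}$, and, when $\t$ and $\phi$ are independent at each point, invoke Lemma~\ref{linvt}(iii) for a canonically built metric (the paper uses $(\mathrm{D},\xi,\,\xi\otimes\xi+\t\otimes\t)$, you use $(\nabla,\t,\,\t\otimes\phi+\phi\otimes\t)$ -- an inessential difference). The genuine gap is the degenerate subcase $\phi=\lambda\hh\t$ (the paper's $\t=s\hh\xi$), which is the actual crux of the theorem and which you leave as a sketch. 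Your specific recipe there provably cannot work: the connection of Example~\ref{hlfpl} is locally homogeneous with skew-symmetric Ricci tensor and falls exactly in this subcase (one computes $\phi=2\hh\xi=-2\hh\t$, i.e.\ $\lambda=-2$, the paper's $s=-1$); for it, $\nabla\t=-\hs\alpha-\t\otimes\t$, so the symmetric part of $\nabla\t$ is just $-\hs\t\otimes\t$, yielding neither a nondegenerate $g$ nor a $1$-form independent of $\t$. Worse, no ``canonical'' construction can succeed for that connection: every element of $\,\mathrm{SL}\hs(\plane)$ preserves $\mathrm{D}$ and $\xi$, hence acts by local affine transformations of $\nabla$, and the resulting one\hh\diml\ unipotent isotropy at each point fixes no frame and preserves no nondegenerate symmetric $2$-tensor, so any tensor built naturally from $\nabla$ is degenerate (a multiple of $\t$, $\t\otimes\t$ or $\rho$), and neither Lemma~\ref{linvt}(i) nor (iii) can be reached along your route.

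What the paper does instead in the proportional case is therefore essential and is missing from your proposal: for $s\ne-1$ it applies Lemma~\ref{flnli} to $\mathrm{D}$ itself and transfers the conclusion to $\nabla$ because $\xi$ is a constant multiple of the torsion $1$-form of $\mathrm{D}$ (a natural object, hence carried to a left-invariant form by any local equivalence), while for $s=-1$, where $\mathrm{D}$ is torsion-free, it identifies $(\bs,\mathrm{D})$ locally with an affine plane, shows via the total symmetry of $\,\mathrm{D}\hh\mathrm{D}\hs d\hs\varphi\,$ that the constant $c$ in Lemma~\ref{lieor}(c) vanishes, recognizes $\xi$ as the form of Example~\ref{hlfpl} after a shift of origin, and concludes that Lemma~\ref{lieor}(a) holds after all. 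A smaller point: in your torsion-free branch, after running the frame construction of Lemma~\ref{flnli}/Example~\ref{yesli} for $\mathrm{D}$, you still need the (easy, but unstated) remark that the torsion form of $\mathrm{D}$, and hence $\nabla=\mathrm{D}-\xi\otimes\mathrm{Id}$ itself, has constant components in that frame before Lemma~\ref{linvt}(i) applies to $\nabla$; alternatively, the paper's naturality argument covers this case at once.
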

\begin{proof}If $\,\nabla\nnh\,$ is flat, we can use Lemma~\ref{flnli}. 
Suppose now that $\,\nabla\hn\,$ is not flat. As the 
Ric\-ci tensor $\,\rho\,$ is nonzero at every point (cf.\ 
Lemma~\ref{rctoc}), the re\-cur\-rence form $\,\phi\,$ with (\ref{rcr}) 
is defined and nonzero everywhere in $\,\bs$. (If $\,\phi\,$ 
were zero somewhere, it would vanish identically in view of local 
homogeneity of $\,\nabla$.) Let $\,\xi=\phi/2$. By (\ref{rcr}) and 
Corollary~\ref{dxtna}, the connection 
$\,\mathrm{D}\hs=\hs\nabla\hs+\,\hh\xi\otimes\mathrm{Id}\,$ is flat, as well 
as locally homogeneous (due to naturality of $\,\xi$), and has the 
torsion $\,1$-form $\,\t+\xi$, where $\,\hs\t$ is the torsion 
$\,1$-form of $\,\nabla\nnh$, cf.\ Remark~\ref{torfm}.

If $\,\xi\,$ and $\,\t\,$ are linearly independent at each point, our claim 
follows from Lemma~\ref{linvt}(iii) for the triple 
$\,(\mathrm{D}\hh,\hs\xi,g)\,$ with $\,g=\xi\otimes\xi+\t\otimes\t$. Let us 
now consider the remaining case where $\,\t=s\hh\xi\,$ for some 
$\,s\in\bbR\hs$, and $\,\mathrm{D}\,$ has the torsion $\,1$-form 
$\,\t+\xi=(s+1)\hs\xi$.

By Lemma~\ref{flnli}, $\,\mathrm{D}\,$ is locally equivalent to a 
left-in\-var\-i\-ant connection on some Lie group. If $\,s\ne-1$, our 
$\,\xi\,$ equals $\,(s+1)^{-1}$ times the torsion $\,1$-form of 
$\,\mathrm{D}$, and the original connection 
$\,\nabla\hn=\hskip1pt\mathrm{D}\hskip1pt-\,\hh\xi\otimes\mathrm{Id}\hh$, 
obtained from $\,\mathrm{D}\,$ via a natural formula, is also locally 
equivalent to a left-in\-var\-i\-ant connection on a Lie group, as required.

Suppose now that $\,s=-1$. Thus, the flat connection $\,\mathrm{D}\,$ is 
tor\-sion\-free, and the pair $\,(\mathrm{D}\hh,\hs\xi)\,$ is locally 
homogeneous, since so is $\,\nabla$. We may identify $\,\bs$, locally, with an 
open convex set $\,\,U\,$ in a real af\-fine plane $\,\plane$, in such a way 
that $\,\mathrm{D}\,$ equals, on $\,\,U\nh$, the standard 
trans\-la\-tion-in\-var\-i\-ant flat tor\-sion\-free connection of $\,\plane$.

As $\,\nabla\hn=\hskip1pt\mathrm{D}\hskip1.5pt-\,\hh\xi\otimes\mathrm{Id}\hh$, 
our assertion will follow if we show that the pair 
$\,(\mathrm{D}\hh,\hs\xi)\,$ satisfies condition (a) in Lemma~\ref{lieor}. To 
this end, let us assume that, on the contrary, (a) in Lemma~\ref{lieor} fails 
to hold. By Lemma~\ref{lieor}, 
$\,\mathrm{D}\hs\xi=\hs\om-\hh c\hskip1.5pt\xi\otimes\xi\,$ for some constant 
$\,c\,$ and a parallel nonzero $\,2$-form $\,\om$. Since $\,\om\,$ is 
parallel, $\,\om=\mathrm{D}\eta\,$ for the $\,1$-form $\,\eta\,$ given by 
$\,\eta_y(v)=\om(y-o,v)$, for $\,y\in U\subset\plane\,$ and 
$\,v\in T_y\plane$, where $\,o\,$ is any fixed origin in the af\-fine plane 
$\,\plane$. Hence $\,\mathrm{D}\hh(\xi-\eta)=-\hs c\hskip1.5pt\xi\otimes\xi\,$ 
is symmetric, that is, $\,\xi-\eta=d\hs\varphi\,$ for some function 
$\,\varphi$, and the $\,3$-ten\-sor 
$\,-\hs c\hs\mathrm{D}\hh(\xi\otimes\xi)\hs
=\hs\mathrm{D}\hh\mathrm{D}\hs d\hs\varphi$ is totally symmetric. 
It follows now that $\,c=0$. Namely, if we had $\,c\ne0$, the 
resulting symmetry of 
$\,[\hs\mathrm{D}\hh(\xi\otimes\xi)](u,v,\,\cdot,)
=\mathrm{D}_u[\hs\xi(v)\hs\xi]\,$ in the $\,\mathrm{D}\hs$-par\-al\-lel vector 
fields $\,u,v$, combined with the equality 
$\,\mathrm{D}\hs\xi=\hs\om-\hh c\hskip1.5pt\xi\otimes\xi\,$ and (\ref{dxi}.b), 
would give $\,3\hs\xi\otimes\om=\xi\wedge\om$, while $\,\xi\wedge\om=0\,$ as 
$\,\dim\hs\plane=2$, and so $\,\xi\otimes\om=0$, which is impossible, since 
$\,\xi\ne0\,$ and $\,\om\ne0\,$ by Lemma~\ref{lieor}(b),\hs(d). This 
contradiction shows that $\,c=0\,$ and $\,\xi-\eta\,$ is 
$\,\mathrm{D}\hs$-par\-al\-lel. The way $\,\eta\,$ depends on the origin 
$\,o$ shows that a suitable choice of $\,o\hs$ will yield $\,\xi=\eta$. We 
may thus treat $\,\plane\,$ as a two\hh\diml\ real vector space in which the 
new origin $\,o\hs\,$ is the zero vector, and $\,\xi\,$ is defined as in 
Example~\ref{hlfpl}. Let us now choose a one\hh\diml\ vector sub\-space 
$\,\line\,$ of $\,\plane\,$ such that 
$\,\,U\subset\plane\smallsetminus\line$. (Note that $\,o\notin U\nh$, as 
$\,\xi\ne0\,$ everywhere in $\,\,U\nh$, while $\,\xi_o=0$.) According to 
Example~\ref{hlfpl}, condition (a) in Lemma~\ref{lieor} is actually satisfied, 
contrary to what we assumed earlier in this paragraph. This new contradiction 
completes the proof.
\end{proof}

\section{Nor\-den's theorems}\label{nthm}
Given a connection $\,\nabla\nh\,$ in a vector bundle $\,\bv$, a vector 
sub\-bun\-dle $\,\bl\subset\bv\,$ is called $\,\nabla\nnh${\it-par\-al\-lel\/} 
when it has the property that, for any vector field $\,v\,$ tangent to the 
base manifold, if $\,\psi\,$ is a section of $\,\bl$, so is 
$\,\nabla_{\hskip-2.2ptv}\psi$.

Let $\,\nabla\,$ be a connection in a vector bundle $\,\bv\,$ over a 
manifold $\,\bs$. We say that {\it the projectivization of\/} $\,\nabla\nh\,$ 
{\it is flat\/} if every one\hh\diml\ vector sub\-space of the fibre 
$\,\bv\hskip-1pt_y$ at any point $\,y\in\bs\,$ is the fibre at $\,y\,$ of 
some $\,\nabla\nh$-par\-al\-lel line sub\-bun\-dle of the restriction of 
$\,\bv\,$ to a neighborhood of $\,y$. This is clearly the same as requiring 
flatness (integrability of the horizontal distribution) of the connection 
induced by $\,\nabla\hn\,$ in the bundle of real projective spaces obtained by 
projectivizing $\,\bv$.
\begin{lem}\label{abspa}A connection\/ $\,\nabla\nh\,$ in a 
real\hs/\nh com\-plex vector bundle\/ $\,\bv\,$ over a manifold\/ $\,\bs\,$ is 
projectively flat in the sense of Section\/~{\rm\ref{pfco}} if and only if the 
projectivization of\/ $\,\nabla\nh\,$ is flat.
\end{lem}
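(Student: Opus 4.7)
My plan is to prove the two implications separately, using Lemma~\ref{prffl} for the direction ``projectively flat implies projectivization flat'', and (\ref{cur}) together with (\ref{dxi}.c) and a brief linear-algebra observation for the converse.

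For the forward direction, I would invoke Lemma~\ref{prffl} to write, on a neighborhood of any given point $y\in\bs$, $\nabla = \mathrm{D} - \xi\otimes\mathrm{Id}$ with $\mathrm{D}$ a flat connection in $\bv$ and $\xi$ a 1-form. Given any line $L\subset\bv_y$, I would extend a nonzero $\psi_0\in L$ to a $\mathrm{D}$-parallel local section $\psi$ on a simply connected neighborhood of $y$, using flatness of $\mathrm{D}$. The line subbundle $\bl = \bbK\psi$ is then $\mathrm{D}$-parallel, and the identity $\nabla_v\psi = \mathrm{D}_v\psi - \xi(v)\psi = -\xi(v)\psi\in\bl$, valid for every local vector field $v$, shows that $\bl$ is $\nabla$-parallel as well, so the projectivization of $\nabla$ is flat.

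For the converse, I would fix $y\in\bs$, a nonzero $\psi_0\in\bv_y$, and a $\nabla$-parallel line subbundle $\bl$ near $y$ with $\bl_y = \bbK\psi_0$, whose existence is furnished by flatness of the projectivization; then I would pick a local section $\psi$ of $\bl$ with $\psi(y) = \psi_0$. Parallelness of $\bl$ reads $\nabla_v\psi = \xi(v)\psi$ for a uniquely determined local 1-form $\xi$, and substituting this into (\ref{cur}) and applying (\ref{dxi}.c) yields, after a brief computation, the identity $R^\nabla(u,v)\psi = -(d\xi)(u,v)\psi$; in particular $\psi_0$ is an eigenvector of $R^\nabla(u,v)_y : \bv_y \to \bv_y$. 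The main step, and the only genuinely new ingredient, is the elementary linear-algebra fact that an endomorphism of a vector space whose every nonzero vector is an eigenvector must be a scalar multiple of the identity (compare eigenvalues on two linearly independent eigenvectors and on their sum). This produces a scalar $\rho(u,v)_y$ with $R^\nabla(u,v)_y = \rho(u,v)_y\cdot\mathrm{Id}$; bilinearity and skew-symmetry in $(u,v)$, inherited from $R^\nabla$, then make $\rho$ a 2-form satisfying $R^\nabla = \rho\otimes\mathrm{Id}$, i.e., $\nabla$ is projectively flat.
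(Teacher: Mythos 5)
Your proof is correct and takes essentially the same route as the paper's: Lemma~\ref{prffl} plus $\mathrm{D}$-parallel sections of the flat connection spanning $\nabla$-parallel line subbundles for the forward direction, and the computation $R\hh(\nh u,v)\hs\psi=-\hs[(d\hs\xi)(\nh u,v)]\hs\psi$ followed by the observation that an endomorphism whose every nonzero vector is an eigenvector is a multiple of the identity for the converse. The only step you leave implicit is that invoking Lemma~\ref{prffl} requires a local $1$-form $\xi$ with $d\hs\xi=\rho$, which exists because $\rho$ is closed (Remark~\ref{prjfl}), exactly as the paper notes before applying that lemma.
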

\begin{proof}If $\,\nabla\hn\,$ is projectively flat, its curvature tensor 
equals $\,\rho\hh\otimes\text{\rm Id}\,$ for some exact $\,2$-form $\,\rho\,$ 
(see Remark~\ref{prjfl}) and we may choose a $\,1$-form $\,\xi\,$ with 
$\,\rho=d\hs\xi$. By Lemma~\ref{prffl}, the connection $\,\mathrm{D}\,$ 
defined as in (\ref{dtn}) is flat. Let $\,\,U\,$ be a contractible 
neighborhood of any given point of $\,\bs$. Since any 
$\,\mathrm{D}\hs$-par\-al\-lel nonzero section of $\,\hs\bv$ over $\,\,U\,$ 
spans a $\,\nabla\nh$-par\-al\-lel line sub\-bun\-dle, the projectivization of 
$\,\nabla\hn\,$ is flat.

Conversely, let the projectivization of $\,\nabla\hn\,$ be flat. If a section 
$\,\psi\,$ of $\,\bv\,$ defined on an open set $\,\,U\subset\bs\,$ spans a 
$\,\nabla\nh$-par\-al\-lel line sub\-bun\-dle, that is, 
$\,\nabla_{\hskip-2.2ptv}\psi=\xi(v)\hs\psi$ for some $\,1$-form $\,\xi\,$ 
on $\,\,U\,$ and all vector fields $\,v\,$ on $\,\,U\nh$, (\ref{cur}) and 
(\ref{dxi}.c) give 
$\,R\hh(\nh u,v)\hs\psi=[\hs(d\hs\xi)(v,u)]\hs\psi$. Thus, for fixed vectors 
$\,u,v\,$ tangent to $\,\bs\,$ at a point $\,y$, {\it every\/} 
$\,\psi\in\bv_y\smallsetminus\{0\}\,$ is an eigenvector of the operator 
$\,R\hh(\nh u,v):\bv_y\to\bv_y$, which, consequently, is a multiple of the 
identity. Hence $\,\nabla\hn\,$ is projectively flat.
\end{proof}
The following immediate consequence of Lemmas~\ref{rctoc} and~\ref{abspa} was 
first proved by Nor\-den \cite{norden-48}, \cite[\S89, formula (5)]{norden-50} 
for tor\-sion\-free connections.
\begin{thm}\label{nrthm}For a connection\/ $\,\nabla\nh\,$ in the tangent 
bundle of a surface, the projectivization of\/ $\,\nabla\,$ is flat if and 
only if the Ric\-ci tensor of\/ $\,\nabla\,$ is skew-sym\-met\-ric.
\end{thm}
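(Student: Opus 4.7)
The plan is to obtain Theorem~\ref{nrthm} by chaining together the two equivalences established earlier in the excerpt, which together express the same property in three different ways. Specifically, Lemma~\ref{abspa} gives, for any connection $\nabla$ in a real/complex vector bundle over a manifold, the equivalence between projective flatness in the sense of Section~\ref{pfco} (i.e.\ the curvature tensor has the form $\rho\otimes\mathrm{Id}$) and flatness of the projectivization (the existence, locally, of a $\nabla$-parallel line subbundle through every line in every fibre). Lemma~\ref{rctoc} gives, specifically in the case of a connection on the tangent bundle of a surface, the equivalence between projective flatness in the sense of Section~\ref{pfco} and skew-symmetry of the Ricci tensor.

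Thus, I would begin by invoking Lemma~\ref{abspa} applied to $\bv=\tb$ to translate the hypothesis that the projectivization of $\nabla$ is flat into the statement that the curvature tensor of $\nabla$ satisfies $R=\rho\otimes\mathrm{Id}$ for some $2$-form $\rho$ on $\Sigma$. Next, I would apply Lemma~\ref{rctoc} to recognize this last property as equivalent to skew-symmetry of the Ricci tensor of $\nabla$; moreover, that lemma identifies the $2$-form $\rho$ with the Ricci tensor itself. Combining the two equivalences yields the assertion of the theorem in both directions.

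There is no real obstacle here, since both lemmas have already been proved and the theorem is purely a matter of composing them. The only care required is to verify that the definition of projective flatness employed in Section~\ref{pfco} is literally the same object appearing on both sides: this is guaranteed because Lemma~\ref{abspa} is formulated for an arbitrary vector bundle $\bv$ and Lemma~\ref{rctoc} specializes to $\bv=\tb$, so no compatibility check beyond matching notation is needed.
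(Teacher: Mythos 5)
Your proposal is correct and is exactly the paper's argument: the paper states Theorem~\ref{nrthm} as an ``immediate consequence of Lemmas~\ref{rctoc} and~\ref{abspa},'' i.e.\ it chains the same two equivalences (flat projectivization $\Leftrightarrow$ $R=\rho\otimes\mathrm{Id}$ by Lemma~\ref{abspa}, and $R=\rho\otimes\mathrm{Id}$ $\Leftrightarrow$ skew-symmetric Ricci tensor on a surface by Lemma~\ref{rctoc}). Nothing further is needed.
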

The next result is also due to Nor\-den \cite[\S49]{norden-50}:
\begin{thm}\label{hidim}If\/ $\,\nabla\,$ is a tor\-sion\-free connection in 
the tangent bundle of a manifold of dimension $\,n>2\,$ and the 
projectivization of\/ $\,\nabla\hn\,$ is flat, then\/ $\,\nabla\nh\,$ itself 
is flat.
\end{thm}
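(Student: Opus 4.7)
The plan is to translate the hypothesis of flat projectivization into the algebraic form $R = \rho \otimes \mathrm{Id}$ via Lemma~\ref{abspa}, and then exploit the first Bianchi identity (which holds because $\,\nabla\hn\,$ is torsionfree) to force $\,\rho\,$ to vanish whenever $\,n>2$.

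More explicitly, by Lemma~\ref{abspa}, flatness of the projectivization of $\,\nabla\,$ is equivalent to projective flatness in the sense of Section~\ref{pfco}, so there exists a $\,2$-form $\,\rho\,$ on the base manifold $\,\bs\,$ with $\,R\hh(\nh u,v)\hs w=\rho\hs(\nh u,v)\hh w\,$ for all vector fields $\,u,v,w\,$ tangent to $\,\bs$. Since $\,\nabla\hn\,$ is torsionfree, the first Bianchi identity reads $\,R\hh(\nh u,v)\hs w+R\hh(v,w)\hs u+R\hh(w,u)\hs v=0$, and substituting the above expression for $\,R\,$ yields the pointwise identity
\begin{equation*}
\rho\hs(\nh u,v)\hh w\,+\,\rho\hs(v,w)\hh u\,+\,\rho\hs(w,u)\hh v\,=\,0\hs.
\end{equation*}

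Now the hypothesis $\,n>2\,$ enters: at any point $\,y\in\bs\,$ and for any pair $\,u,v\in T_y\bs\,$ which is linearly independent, we may extend it to a triple $\,u,v,w\,$ of linearly independent tangent vectors. The identity above then expresses a linear dependence among $\,u,v,w$, forcing each coefficient to vanish; in particular $\,\rho\hs(\nh u,v)=0$. For linearly dependent pairs $\,u,v\,$ the value $\,\rho\hs(\nh u,v)\,$ vanishes automatically by skew-symmetry. Hence $\,\rho\equiv0$, so $\,R=0\,$ and $\,\nabla\hn\,$ is flat.

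There is essentially no obstacle: once Lemma~\ref{abspa} is invoked, the argument is a single application of the first Bianchi identity together with the dimension count $\,n\ge3\,$ used to extract a linearly independent triple. The only point requiring mild care is confirming that the paper's curvature sign convention (\ref{cur}) still yields the standard cyclic first Bianchi identity for torsionfree $\,\nabla$, which it does, since reversing the overall sign of $\,R\,$ preserves the cyclic sum.
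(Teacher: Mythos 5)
Your proposal is correct and follows essentially the same route as the paper: invoke Lemma~\ref{abspa} to get $R=\rho\otimes\mathrm{Id}$, apply the first Bianchi identity for the torsionfree connection, and use $n>2$ to complete a linearly independent pair $u,v$ to a triple, forcing $\rho(u,v)=0$. The remark about the sign convention is a nice touch but, as you note, immaterial.
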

\begin{proof}Let $\,R\,$ be the curvature tensor of $\,\nabla$, and let 
$\,u,v,w\,$ be tangent vectors at any point. If $\,u,v\,$ are linearly 
dependent, $\,R\hh(\nh u,v)\hh w=0$. If they are not, choosing $\,w\,$ such 
that $\,u,v,w\,$ are linearly independent, we have 
$\,\rho\hs(\nh u,v)\hh w+\rho\hs(v,w)\hh u+\rho\hs(w,u)\hh v=0\,$ from the 
first Bianchi identity for $\,R=\rho\hh\otimes\text{\rm Id}\,$ (cf.\ 
Lemma~\ref{abspa}), so that $\,\rho\hs(\nh u,v)=0\,$ and, again, 
$\,R\hh(\nh u,v)\hh w=\rho\hs(\nh u,v)\hh w=0\,$ for {\it every\/} vector 
$\,w$.
\end{proof}
Some more results of Nor\-den's are discussed at the end of the next section.

The conclusion of Theorem~\ref{hidim} fails, in general, for connections with 
torsion: to obtain a counterexample, we set 
$\,\nabla\hs=\hskip2pt\mathrm{D}\hskip2pt-\,\hh\xi\otimes\mathrm{Id}$, where 
$\,\mathrm{D}\,$ is a flat connection in the tangent bundle of a manifold 
$\,\bs\,$ of any dimension, and the $\,1$-form $\,\hs\xi\hs\,$ on $\,\hs\bs$ 
is not closed. In fact, by Lemmas~\ref{abspa} and~\ref{prffl}, the 
projectivization of $\,\nabla\hn\,$ is flat, while (\ref{rdr}) shows that 
$\,\nabla\hn\,$ itself is not flat.

Counterexamples as above cannot contradict Theorem~\ref{hidim} by producing 
a {\it tor\-sion\-free\/} connection $\,\nabla\,$ in any dimension $\,n>2$. 
In fact, if the torsion tensor $\,\T\,$ of a flat connection $\,\mathrm{D}\,$ 
in $\,\tb\,$ equals $\,\xi\wedge\mathrm{Id}\hh$, for a $\,1$-form $\,\xi\,$ 
(and so $\,\nabla\hs=\hs\mathrm{D}\hskip2pt-\,\hh\xi\otimes\mathrm{Id}\,$ is 
tor\-sion\-free), then, expressing $\,[\hskip2.5pt,\hskip1pt]\,$ in terms of 
$\,\nabla\,$ and $\,\T\,$ we obtain $\,[u,v]=\xi(v)\hh u-\xi(u)\hh v\,$ and 
$\,\xi([u,v])=0\,$ for any $\,\mathrm{D}\hs$-par\-al\-lel vector fields 
$\,u,v$. The Jacobi identity and (\ref{dxi}.c) now give 
$\,(d\hs\xi)(\nh u,v)\hh w+(d\hs\xi)(v,w)\hh u+(d\hs\xi)(w,u)\hh v=0$ 
whenever $\,u,v,w\,$ are $\,\mathrm{D}\hs$-par\-al\-lel, which, as in the 
proof of Theorem~\ref{hidim}, implies that $\,d\hs\xi=0\,$ when $\,n>2$.

\section{Frac\-tion\-al-lin\-e\-ar La\-grang\-i\-ans and first 
integrals}\label{lagr}
Among tor\-sion\-free connections $\,\nabla\hn\,$ on surfaces, those with 
skew-sym\-met\-ric Ric\-ci tensor have an interesting characterization in 
terms of the geodesic flow, discovered by An\-der\-son and Thomp\-son 
\cite[pp.\ 104--107]{anderson-thompson}. It consists, locally, in the 
existence of a frac\-tion\-al-lin\-e\-ar La\-grang\-i\-an for which the 
$\,\nabla\nh$-ge\-o\-des\-ics are the Eu\-ler-La\-grange trajectories. A 
related result of Nor\-den \cite[\S89]{norden-50} establishes the existence of 
a frac\-tion\-al-lin\-e\-ar first integral for the geodesic equation as a 
consequence of skew-sym\-me\-try of the Ric\-ci tensor (cf.\ also 
\cite{norden-46}). The definition of a frac\-tion\-al-lin\-e\-ar function can 
be found in Appendix C.

Both results are presented below. First, what An\-der\-son and Thomp\-son 
proved in \cite[pp.\ 104--107]{anderson-thompson} can be phrased as follows.
\begin{thm}\label{fllag}For a tor\-sion\-free connection\/ $\,\nabla\nh\,$ on 
a surface\/ $\,\bs$, the Ric\-ci tensor\/ $\,\hs\rho$  of\/ $\,\nabla\hn\,$ is 
skew-sym\-met\-ric if and only if every point in\/ 
$\,\tb\smallsetminus\bs\,$ has a neighborhood\/ $\,\,U\nh\,$ with a 
frac\-tion\-al-lin\-e\-ar La\-grang\-i\-an\/ $\,L:U\to\bbR\,$ such that the 
solutions of the Eu\-ler-La\-grange equations for\/ $\,L\,$ coincide with 
those geodesics of\/ $\,\nabla\hn\,$ which, lifted to $\,\tb$, lie in\/ 
$\,\,U\nh$.
\end{thm}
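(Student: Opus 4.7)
I will use two main ingredients: Wong's canonical form (Theorem~\ref{wongs}) for tor\-sion\-free connections with skew-sym\-met\-ric Ric\-ci tensor, and the Ham\-il\-ton\-i\-an formalism (via the Le\-gen\-dre transform between $\tb$ and $\tab$) translating between La\-grang\-i\-ans on $\tb$ and Ham\-il\-to\-ni\-ans on $\tab$.

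For the forward implication, start from an arbitrary point of $\bs$ and apply Theorem~\ref{wongs} to obtain local coordinates $y^1,y^2$ on a neighborhood in which the only nonzero Christoffel symbols of $\nabla$ are $\vg_{\!11}^1=-\partial_1\varphi$ and $\vg_{\!22}^2=\partial_2\varphi$ for a suitable function $\varphi$. The geodesic ODEs decouple into $\ddot y^1=(\partial_1\varphi)(\dot y^1)^2$ and $\ddot y^2=-(\partial_2\varphi)(\dot y^2)^2$. I will then exhibit an explicit frac\-tion\-al-lin\-e\-ar Lagrangian $L$ on a suitable open subset of $\tu\smallsetminus\bs$, whose numerator and denominator are linear in $\dot y^1,\dot y^2$ with coefficients built from $\varphi$ and its derivatives, and verify by a direct computation that the Eu\-ler-La\-grange equations for $L$ reproduce the above ODEs, possibly after a scaling of the parameter.

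For the converse, take a frac\-tion\-al-lin\-e\-ar Lagrangian $L$ on an open set $U\subset\tb\smallsetminus\bs$ and work out its Eu\-ler-La\-grange equations. The distinguishing feature of a frac\-tion\-al-lin\-e\-ar expression on a two-dimensional fiber is that it behaves well under fiberwise projective transformations, so that the Eu\-ler-La\-grange equations define a projectively flat path geometry on $\bs$. Passing through the Le\-gen\-dre transform, which is regular for a generic frac\-tion\-al-lin\-e\-ar $L$, yields a Hamiltonian $H$ on $\tab$ whose flow projects back to $\bs$ as a tor\-sion\-free second-or\-der spray, i.e., the geodesic spray of a tor\-sion\-free connection. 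By Nor\-den's Theorem~\ref{nrthm}, the projective flatness of this path geometry is equivalent to skew-sym\-me\-try of the Ric\-ci tensor of the recovered connection, which by construction coincides with~$\nabla$.

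The most delicate point is the converse: guaranteeing that the second-order system obtained from $L$ is literally the spray of a tor\-sion\-free connection, not merely a reparametrization of one. The Ham\-il\-ton\-i\-an formalism is exactly the right tool here, since it separates the projective class of the flow (which is all the Eu\-ler-La\-grange equations determine intrinsically up to reparametrization) from the specific affine parametrization; once the projective class of the Eu\-ler-La\-grange flow is matched to the unparametrized geodesics of $\nabla$ via Theorem~\ref{nrthm}, the affine parametrization of $\nabla$ can be recovered on the Hamiltonian side through an explicit choice of energy function.
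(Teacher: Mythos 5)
The converse half of your argument contains the decisive gap, and the step it rests on is not available. You pass from the Euler-Lagrange equations of a fractional-linear Lagrangian to a ``projectively flat path geometry'' and then invoke Theorem~\ref{nrthm} to conclude skew-symmetry of the Ricci tensor. But Theorem~\ref{nrthm} concerns flatness of the projectivization of $\,\nabla\,$ as a linear connection in the vector bundle $\,\tb\,$ (local $\,\nabla$-parallel line subbundles, equivalently $\,R=\rho\otimes\mathrm{Id}$), not projective flatness of the geodesic path structure; Section~\ref{pfco} warns explicitly that these are different notions, and a torsionfree connection with skew-symmetric Ricci tensor need not have a straightenable (classically projectively flat) family of geodesics. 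So the equivalence you use is not what Theorem~\ref{nrthm} asserts, quite apart from the fact that your claim that every fractional-linear Lagrangian produces a projectively flat path geometry is offered without proof. Moreover, the point you yourself single out as delicate --- that the Euler-Lagrange system is the affinely parametrized geodesic flow of a torsionfree connection which can then be compared with $\,\nabla\,$ --- is left entirely to the phrase that the Hamiltonian formalism ``is the right tool''. The paper supplies exactly this missing ingredient as Theorem~\ref{flham}: for $\,L=\eta/\zeta\,$ with $\,\zeta,\eta\,$ dual to a frame $\,v,w$, the Euler-Lagrange solutions are precisely the curves with $\,\mathrm{D}_{\dot y}\dot y=\tau(\dot y)\,\dot y$, where $\,\mathrm{D}\,$ is the flat connection making $\,v,w\,$ parallel and $\,\tau\,$ its torsion form; then $\,\nabla\,$ and $\,\mathrm{D}-\tau\otimes\mathrm{Id}\,$ are torsionfree connections with the same parametrized geodesics, hence equal by the coordinate form of the geodesic equation, and Corollary~\ref{dxtna} yields skew-symmetry of $\,\rho$. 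Nothing in your proposal substitutes for that computation.

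The forward half is closer to workable but is only a promise: you never exhibit the Lagrangian or verify its Euler-Lagrange equations. In the Wong coordinates of Theorem~\ref{wongs} one can take $\,L=e^{\varphi}\dot y^2\!/\dot y^1$ (which is exactly the paper's $\,\eta/\zeta\,$ for the $\,\mathrm{D}$-parallel frame occurring in the proof of Theorem~\ref{wongs}), and the companion Lagrangian $\,e^{-\varphi}\dot y^1\!/\dot y^2\,$ is needed to cover vectors with $\,\dot y^1=0$, since the statement requires a neighborhood of every point of $\,\tb\smallsetminus\bs$. Also, your escape clause ``possibly after a scaling of the parameter'' is admissible only for constant affine rescalings: the theorem demands that the Euler-Lagrange solutions coincide with the affinely parametrized geodesics, and a nonconstant reparametrization would not do.
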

\begin{proof}Let $\,\rho\,$ be skew-sym\-met\-ric. Locally, in $\,\bs$, we may 
choose a $\,1$-form $\,\xi\,$ with $\,\hs d\hs\xi=\rho\,$ and 
$\,\mathrm{D}\hs$-par\-al\-lel vector fields $\,v,w\,$ trivializing $\,\tb$, 
where $\,\mathrm{D}\,$ is the flat connection given by 
$\,\mathrm{D}\hs=\hs\nabla\hs+\,\hh\xi\otimes\mathrm{Id}\,$ (see 
Corollary~\ref{dxtna}). Since the $\,\nabla\nh$-ge\-o\-des\-ic equation 
$\,\nabla_{\hskip-2.2pt\dot y}\dot y=0\,$ for a curve 
$\,t\mapsto y(t)\in\bs\,$ now amounts to 
$\,\mathrm{D}_{\dot y}\dot y=\xi(\dot y)\hs\dot y$, while, as 
$\,\nabla\nh\,$ is tor\-sion\-free, Remark~\ref{torfm} states that the torsion 
$\,1$-form $\,\tau\,$ of $\,\mathrm{D}\,$ coincides with $\,\xi$, the required 
La\-grang\-i\-an $\,L\,$ exists in view of Theorem~\ref{flham} in Appendix C.

Conversely, if such a La\-grang\-i\-an exists, it has, locally, the form 
$\,\hs L=\eta/\nh\zeta$ appearing in Theorem~\ref{flham}, and so, by 
Theorem~\ref{flham}, the Eu\-ler-La\-grange trajectories for $\,L\,$ (that is, 
the $\,\nabla\nh$-ge\-o\-des\-ics) are characterized by 
$\,\mathrm{D}_{\dot y}\dot y=\tau(\dot y)\hs\dot y$, where $\,\mathrm{D}\,$ is 
a flat connection and $\,\tau\,$ is the torsion $\,1$-form of $\,\mathrm{D}$. 
The connections $\,\hs\nabla$ and 
$\,\mathrm{D}\hs-\,\hh\tau\otimes\mathrm{Id}\,$ thus have the same geodesics, 
and are both tor\-sion\-free according to Remark~\ref{torfm}, so that they 
coincide due to the coordinate form of the geodesic equation. Thus, 
$\,\nabla\,$ has skew-sym\-met\-ric Ric\-ci tensor by Corollary~\ref{dxtna}.
\end{proof}
The next theorem, proved by Nor\-den \cite[\S89]{norden-50} in the 
tor\-sion\-free case, remains valid for connections with torsion. The 
frac\-tion\-al-lin\-e\-ar first integral for the geodesic equation, appearing 
in Nor\-den's original statement, arises as the composite, in which 
$\,\zy$, viewed as a function $\,\tb\to\bbC\,$ real-lin\-e\-ar on the fibres, 
is followed by a frac\-tion\-al-lin\-e\-ar function defined on an open subset 
of $\,\bbC\hs$.
\begin{thm}\label{flfig}Let\/ $\,\nabla\hn\,$ be a connection on a surface 
$\,\bs$. If the Ric\-ci tensor\/ $\,\rho\,$ of\/ $\,\hs\nabla$ is 
skew-sym\-met\-ric, then every point of\/ $\,\bs\,$ has a neighborhood\/ 
$\,\,U\nh\,$ with a com\-plex-val\-ued $\,1$-form\/ $\,\zy\hn\,$ on\/ 
$\,\,U\hn\,$ such that\/ $\,\zy_y:\tyb\to\bbC\,$ is a real-lin\-e\-ar 
isomorphism for each $\,y\in U\hn\,$ and, for any geodesic $\,t\mapsto y(t)\,$ 
of\/ $\,\nabla\hn\,$ contained in\/ $\,\,U\nnh$, either\/ 
$\,\zy\hh(\dot y)=0\,$ for all\/ $\,t$, or\/ $\,\zy\hh(\dot y)\ne0\,$ for 
all\/ $\hs\,t\,$ and\/ 
$\,\zy\hh(\dot y)/|\hh\zy\hh(\dot y)|\,$ is constant as a function of\/ $\,t$.
\end{thm}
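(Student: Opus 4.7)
The plan is to use Corollary~\ref{dxtna} to reduce the problem to a flat connection, and then exploit the fact that the cotangent bundle of a surface with a flat connection admits, locally, parallel sections forming a basis. First I would invoke Corollary~\ref{dxtna} to write, locally on a suitable neighborhood $U$ of a given point, $\,\nabla=\mathrm{D}-\xi\otimes\mathrm{Id}\,$ where $\mathrm{D}$ is a flat connection on $U$ and $\xi$ is a real $1$-form with $d\xi=\rho$. Since $\mathrm{D}$ is flat on the surface $U$, after possibly shrinking $U$, I can choose two $\mathrm{D}$-parallel real $1$-forms $\alpha,\beta$ on $U$ which are linearly independent at every point (obtained by dualizing a $\mathrm{D}$-parallel frame of $\tu$, which exists locally by flatness).

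Next I define the complex-valued $1$-form $\zy=\alpha+i\beta$ on $U$. By construction $\zy$ is $\mathrm{D}$-parallel (viewing $\mathrm{D}$ as acting component-wise on complex-valued tensors), and each $\zy_y:\tyb\to\bbC$ is a real-linear isomorphism since $\alpha_y,\beta_y$ form a real basis of $\tayb$. Now for a geodesic $t\mapsto y(t)$ of $\nabla$ lying in $U$, the equation $\nabla_{\dot y}\dot y=0$ translates, via $\nabla=\mathrm{D}-\xi\otimes\mathrm{Id}$, into
\begin{equation*}
\mathrm{D}_{\dot y}\dot y\,=\,\xi(\dot y)\hs\dot y\hs.
\end{equation*}
Differentiating $f(t)=\zy(\dot y(t))\in\bbC$ and using $\mathrm{D}\zy=0$, I obtain
\begin{equation*}
\dot f\,=\,(\mathrm{D}_{\dot y}\zy)(\dot y)\,+\,\zy(\mathrm{D}_{\dot y}\dot y)\,
=\,\xi(\dot y)\hs\zy(\dot y)\,=\,\xi(\dot y)\hs f\hs.
\end{equation*}

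Since $\xi(\dot y)$ is real-valued, this linear scalar ODE has the solution $f(t)=f(0)\exp\bigl(\int_0^t\xi(\dot y(s))\,ds\bigr)$. Uniqueness for ODEs forces either $f\equiv 0$ or $f(t)\ne0$ for every $t$; in the latter case the factor $\exp(\int_0^t\xi(\dot y)\,ds)$ is a positive real number, so $f(t)/|f(t)|=f(0)/|f(0)|$ is constant in $t$. This gives exactly the stated conclusion for $\zy$. There is no real obstacle in this argument; the only subtlety is the choice of complexification, which is what converts the two independent real parallel $1$-forms into a single complex $1$-form whose modulus-normalized value is a bona fide first integral of the geodesic equation.
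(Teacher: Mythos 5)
Your proposal is correct and follows essentially the same route as the paper's own proof: choose $\,\xi\,$ with $\,d\hs\xi=\rho\,$, pass to the flat connection $\,\mathrm{D}\hs=\hs\nabla+\xi\otimes\mathrm{Id}\,$ via Corollary~\ref{dxtna}, take a complex-valued $1$-form whose real and imaginary parts are $\,\mathrm{D}$-parallel and trivialize $\,\tab$, and rewrite the geodesic equation as $\,[\hh\zy(\dot y)]\dot{\,}=\xi(\dot y)\hs\zy(\dot y)\,$ with $\,\xi\,$ real-valued. The only difference is that you spell out the resulting scalar ODE argument explicitly, which the paper leaves implicit.
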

\begin{proof}We may choose, locally, a $\,1$-form $\,\xi\,$ with 
$\,d\hs\xi=\rho\,$ and a com\-plex-val\-ued $\,1$-form $\,\zy\,$ such that 
$\,\mathrm{Re}\hskip2.5pt\zy\,$ and $\,\mathrm{Im}\hskip2.5pt\zy\,$ trivialize 
$\,\tab\,$ and are $\,\mathrm{D}\hs$-par\-al\-lel, for the flat connection 
$\,\mathrm{D}\hs=\hs\nabla\hs+\,\hh\xi\otimes\mathrm{Id}\,$ (see 
Corollary~\ref{dxtna}). The $\,\nabla\nh$-ge\-o\-des\-ic equation for a curve 
$\,t\mapsto y(t)\,$ reads $\,\nabla_{\hskip-2.2pt\dot y}\dot y=0$, that is, 
$\,\mathrm{D}_{\dot y}\dot y=\xi(\dot y)\hs\dot y$. Applying the 
$\,\mathrm{D}\hs$-par\-al\-lel form $\,\zy\,$ to both sides, we may rewrite 
the last equation as 
$\,[\hh\zy\hh(\dot y)]\dot{\,}=\xi(\dot y)\hs\zy\hh(\dot y)$. Since $\,\xi\,$ 
is real-val\-ued, $\,\zy\,$ has all the required properties.
\end{proof}
Finally, Nor\-den also showed in \cite[\S89]{norden-50} that, if $\,\zy\hn\,$ 
is a com\-plex-val\-ued $\,1$-form on a surface $\,\bs\,$ and, for every 
$\,\hs y\in\bs$, the real-lin\-e\-ar operator $\,\,\zy_y:\tyb\to\bbC$ 
\hbox{is an} isomorphism, then the conclusion of Theorem~\ref{flfig} holds for 
{\it some\/} tor\-sion\-free connection $\,\nabla\hn\,$ with 
skew-sym\-met\-ric Ric\-ci tensor on $\,\bs$. This is immediate since 
$\,\zy\,$ is $\,\mathrm{D}\hs$-par\-al\-lel for a unique flat connection 
$\,\mathrm{D}$. Setting 
$\,\hs\nabla\hs=\hs\mathrm{D}\hs-\,\hh\tau\otimes\mathrm{Id}$, 
where $\,\tau\,$ is the torsion $\,1$-form of $\,\mathrm{D}$, we easily obtain 
our assertion using Corollary~\ref{dxtna}, Remark~\ref{torfm} and the proof of 
Theorem~\ref{flfig}.

\section{Walker metrics and pro\-ject\-a\-bil\-i\-ty}\label{wmap}
Suppose that $\,\mathcal{V}\,$ is a null parallel distribution on a \psr\ 
manifold $\,(M,g)$. As the parallel distribution $\,\mathcal{V}^\perp$ is 
integrable, replacing $\,M\,$ with a suitable neighborhood of any given 
point, we may assume that
\begin{equation}\label{bdl}
\begin{array}{l}
\mathrm{the\ leaves\ of\ 
}\hs\mathcal{V}^\perp\nnh\mathrm{\ are\ all\ 
contractible\ and\ constitute\ 
the\ fibres}\\
\mathrm{of\ \hskip1.44pta\ \hskip1.44ptbundle\ \hskip1.44ptprojection\ }\,\,
\pi:M\to\bs\,\,
\mathrm{\ over\ \hskip1.44ptsome\ \hskip1.44ptmanifold\ }\,\bs\hh.
\end{array}
\end{equation}
Let a null parallel distribution $\,\mathcal{V}\,$ on a \psr\ manifold 
$\,(M,g)\,$ satisfy (\ref{bdl}) and the additional curvature condition
\begin{equation}\label{rvu}
R\hh(v,\,\cdot\,)\hs u=0\hskip9pt\mathrm{for\ all\ sections}\hskip5ptv
\hskip5pt\mathrm{of}\hskip5pt\mathcal{V}\hskip4pt\mathrm{and}\hskip5ptu
\hskip5pt\mathrm{of}\hskip5pt\mathcal{V}^\perp.
\end{equation}
Then, by \cite[p.\ 587, 
assertions (ii) and (iv) in Section 14]{derdzinski-roter},
\begin{enumerate}
  \def\theenumi{{\rm\alph{enumi}}}
\item[(a)] the requirement that $\,\pi^*\xi=g(v,\,\cdot\,)\,$ defines a 
natural bijective correspondence between sections $\,v\,$ of $\,\mathcal{V}\,$ 
parallel along $\,\,\mathcal{V}^\perp$ and sections $\,\xi\nh\,$ of $\,\tab$,
\item[(b)] there exists a unique tor\-sion\-free connection $\,\nabla\hn\,$ on 
$\,\bs\,$ such that, for any $\,\pi$-pro\-ject\-a\-ble vector field $\,w\,$ on 
$\,M$, if $\,v\,$ and $\,\xi\,$ realize the correspondence in (a), then so do 
$\,v\hh'\nh=\hbox{\hskip1.3pt$\overline{\hskip-1.3pt\nabla\hskip-1.3pt}$}_
{\hskip-1.2ptw}v\,$ and 
$\,\xi\hs'\nh=\nabla_{\hskip-1.4pt\pi\hskip-.4ptw}\hs\xi$, where the vector 
field $\,\hs\pi w\hs\,$ on $\,\hs\bs$ \hbox{is the} $\,\pi$-im\-age of $\,w$, 
and 
$\,\hbox{\hskip1.3pt$\overline{\hskip-1.3pt\nabla\hskip-1.3pt}$\hskip1.3pt}\,$ 
denotes the Le\-vi-Ci\-vi\-ta connection of $\,g$.
\end{enumerate}
We refer to $\,\nabla\hn\,$ as the {\it projected connection\/} on $\,\bs$, 
corresponding to $\,g\,$ and $\,\bv$.

\section{A theorem of D\'\i az\hh-Ramos, Garc\'\i a\hh-R\'\i o and 
V\'azquez\hh-Lorenzo}\label{drgr}
One says that an en\-do\-mor\-phism of a pseu\-\hbox{do\hs-}Euclid\-e\-an 
$\,3$-space is of {\it Pe\-trov type\/} III if it is self-ad\-joint and sends 
some ordered basis $\,(X,Y,Z)\,$ to $\,(0,X,Y)$. We are interested in the case 
where this en\-do\-mor\-phism is the self-du\-al Weyl tensor of an oriented 
\psr\ four\mfd\ $\,(M,g)\,$ of the neutral signature $\,(\mmpp)$, acting in 
the $\,3$-space of self-du\-al $\,2$-forms at a point of $\,M$.

In \cite[Theorem~3.1(ii.3)]{diaz-ramos-garcia-rio-vazquez-lorenzo} 
D\'\i az\hh-Ramos, Garc\'\i a\hh-R\'\i o and V\'azquez\hh-Lorenzo 
described the lo\-cal-i\-som\-e\-try types of all those 
cur\-va\-ture\hskip.5pt-ho\-mo\-ge\-ne\-ous self-du\-al oriented 
Ein\-stein four\hs-man\-i\-folds of the neutral metric signature $\,(\mmpp)$, 
which are of Pe\-trov type III, in the sense that so is the self-dual Weyl 
tensor at each point of the manifold, and admit a two\hh\diml\ null parallel 
distribution. 

The metrics mentioned above can also be characterized as {\it the type\/} III 
\hbox{\it Jor\-dan\hs-} {\it Osser\-man Walker metrics in dimension four}. See 
\cite[Remark 2.1]{diaz-ramos-garcia-rio-vazquez-lorenzo}.

The description in 
\cite[Theorem~3.1(ii.3)]{diaz-ramos-garcia-rio-vazquez-lorenzo} had the 
form of a lo\-cal-co\-or\-di\-nate expression. We rephrase it below (see 
Theorem~\ref{maith}) using co\-or\-di\-nate-free language. The first part 
of Theorem~\ref{maith} slightly generalizes a result of 
Garc\'\i a\hh-R\'\i o, Kupeli, V\'azquez\hh-Abal and V\'azquez\hh-Lorenzo 
\cite[Theorem~9]{garcia-rio-kupeli-vazquez-abal-vazquez-lorenzo}, which 
also uses a co\-or\-di\-nate-free formula and assumes that, in our notation, 
$\,\lambda=0$.

First we need some definitions. Let $\,M=\tab\,$ be the \ts\ of the \ctb\ of 
a \mf\ $\,\bs$, and let $\,\pi:\tab\to\bs\,$ be the bundle projection. Any 
connection $\,\nabla\hn\,$ on $\,\bs\,$ gives rise to the 
Patter\-son\hs-\nh Walker {\it Riemann extension metric\/} 
\cite{patterson-walker}, which is the \prc\ $\,g\nnh^\nabla$ on $\,\tab\,$ 
defined by requiring that all vertical and all $\,\nabla$-hor\-i\-zon\-tal 
vectors be $\,g\nnh^\nabla\nnh$-null, while 
$\,g_x\hskip-4.3pt^\nabla\hskip-2pt(\xi,w)=\xi(d\pmb_xw)\,$ for any 
$\,x\in\tab=M$, any vertical vector $\,\xi\in\kerd\pi_x=\tayb$, with 
$\,y=\pi(x)$, and any $\,w\in T\hskip-2pt_x\hskip-.9ptM$. Patter\-son and 
Walker also studied in \cite{patterson-walker} metrics of the form 
$\,g\,=\,g\nnh^\nabla\nnh+\hh\pi^*\nnh\lambda$, where $\,\lambda\,$ is any 
fixed twice-co\-var\-i\-ant symmetric tensor field on $\,\bs$.
\begin{thm}\label{maith}Let there be given a surface\/ $\,\bs$, a 
tor\-sion\-free connection\/ $\,\nabla\hn\,$ on $\,\bs$ such that the 
Ric\-ci tensor\/ $\,\rho\hn\,$ of\/ $\,\nabla\hn\,$ is skew-sym\-met\-ric and 
nonzero everywhere, and a twice-co\-var\-i\-ant symmetric tensor field\/ 
$\,\lambda\,$ on $\,\bs$. Then, for a suitable orientation of the four\mfd\/ 
$\,M=\tab$, the metric\/ $\,g\,=\,g\nnh^\nabla\hn+\hh\pi^*\nnh\lambda\,$ on\/ 
$\,M$, with the neutral signature\/ $\,(\mmpp)$, is Ric\-ci-flat and 
self-du\-al of Pe\-trov type\/ {\rm III}, the vertical distribution\/ 
$\,\mathcal{V}=\kerd\pi\,$ is $\,g$-null and\/ $\,g$-par\-al\-lel, while\/ 
$\,g\,$ and\/ $\,\bv\,$ satisfy the curvature condition\/ {\rm(\ref{rvu})}, 
and the corresponding projected connection on\/ $\,\bs$, characterized by\/ 
{\rm(b)} in Section\/~{\rm\ref{wmap}}, coincides with our original\/ 
$\,\nabla$.

Conversely, if\/ $\,(M,g)\,$ is a neu\-tral-sig\-na\-ture oriented self-du\-al 
Ein\-stein four\mfd\ of Pe\-trov type\/ {\rm III} admitting a two\hh\diml\ 
null parallel distribution\/ $\,\mathcal{V}$, then, for every $\,x\in M\,$ 
there exist $\,\bs,\nabla,\lambda\,$ as above and a dif\-feo\-mor\-phism of a 
neighborhood of\/ $\,x\,$ onto an open subset of\/ $\,\tab$, under which\/ 
$\,g\,$ corresponds to the metric\/ $\,g\nnh^\nabla\hn+\hh\pi^*\nnh\lambda$, 
and\/ $\,\mathcal{V}\,$ to the vertical distribution\/ $\,\kerd\pi$.
\end{thm}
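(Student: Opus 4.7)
The plan is to prove the forward direction by direct computation in the adapted fibre coordinates on $\,\tab\,$ provided by Wong's normal form (Theorem~\ref{wongs}), and to prove the converse by invoking the local coordinate classification of \cite[Theorem~3.1(ii.3)]{diaz-ramos-garcia-rio-vazquez-lorenzo} and recognizing the resulting expression as a generalized Riemann extension.

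For the forward direction, I first use Theorem~\ref{wongs} to choose local coordinates $\,y^1\nnh,y^2$ on $\,\bs\,$ in which $\,\nabla\hn\,$ has $\,\vg_{\!11}^1=-\hs\partial_1\varphi$, $\,\vg_{\!22}^2=\hs\partial_2\varphi\,$ and all remaining components equal to zero. Extending to canonical coordinates $\,(y^1\nnh,y^2\nnh,\xi_1,\xi_2)\,$ on $\,\tab\,$ via the dual basis, the metric reads
\[
g\,\,=\,\,2\hs dy^j\hh d\xi_j\,\,-\,\,2\hs\vg_{\hskip-2.7ptjk}^{\hs l}\hh\xi_l\hs dy^j\hh dy^k\,\,+\,\,\lambda_{jk}(y)\hs dy^j\hh dy^k.
\]
From this one reads off, by direct inspection: the signature $\,(\mmpp)$; the fact that $\,\mathcal{V}=\kerd\pi\,$ is null and $\,g$-par\-al\-lel, with $\,\mathcal{V}^\perp$ spanned by $\,\mathcal{V}\,$ and by the $\,\nabla\hs$-hor\-i\-zon\-tal lifts of $\,\partial/\partial y^j$; the curvature condition (\ref{rvu}) (since $\,\xi_l\,$ appears only linearly in $\,g$); Ric\-ci-flat\-ness (from the classical identity for the Ric\-ci tensor of a Riemann extension, combined with skew-sym\-me\-try of $\,\rho\,$ and fibre-con\-stan\-cy of $\,\pi^*\nnh\lambda$); and, via (a)--(b) in Section~\ref{wmap}, that the projected connection coincides with the original $\,\nabla\hn$. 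The self-du\-al Pe\-trov type III assertion for a suitable orientation then amounts to computing the $\,3\hh\diml\hs$ self-du\-al Weyl operator in these coordinates: its only non-zero entry is proportional to $\,\partial_1\partial_2\varphi=-\rho_{12}\ne0$, which produces the required cubic nilpotent Jordan block.

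For the converse, given $\,(M,g)\,$ as in the statement, shrinking $\,M\,$ I may assume (\ref{bdl}) and obtain the tor\-sion\-free pro\-ject\-ed connection $\,\nabla\,$ on $\,\bs\,$ via (b) in Section~\ref{wmap}. The coordinate normal form of \cite[Theorem~3.1(ii.3)]{diaz-ramos-garcia-rio-vazquez-lorenzo} expresses $\,g$, in adapted Walker coordinates, precisely as $\,g\nnh^\nabla\hn+\hh\pi^*\nnh\lambda\,$ for a function $\,\varphi\,$ playing the role of the one in Theorem~\ref{wongs} and a symmetric tensor $\,\lambda\,$ on $\,\bs$; matching term by term with the forward formula above identifies the adapted Walker coordinates with the canonical coordinates on $\,\tab\hh$. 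The Pe\-trov type III hypothesis forces $\,\partial_1\partial_2\varphi\ne0$, so that by Theorem~\ref{wongs} the Ric\-ci tensor of $\,\nabla\,$ is nowhere-zero and skew-sym\-met\-ric, completing the local classification.

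The main obstacle will be the Pe\-trov type III curvature computation: one must verify that, in Wong's coordinates, the self-du\-al Weyl operator of $\,g\nnh^\nabla\hn+\hh\pi^*\nnh\lambda\,$ has exactly one non-trivial cubic nilpotent Jordan block, and that the addition of the purely horizontal tensor $\,\pi^*\nnh\lambda\,$ leaves this structure undisturbed. Since the relevant curvature formulas for generalized Riemann extensions are already available in \cite{garcia-rio-kupeli-vazquez-abal-vazquez-lorenzo,diaz-ramos-garcia-rio-vazquez-lorenzo}, the bulk of the work consists of careful book-keeping of the identification between $\,\mathcal{V}^\perp$ and the $\,\nabla\hs$-hor\-i\-zon\-tal distribution, and of translating the coordinate statement of \cite[Theorem~3.1(ii.3)]{diaz-ramos-garcia-rio-vazquez-lorenzo} into the coordinate-free framework used here.
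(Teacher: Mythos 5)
Your converse is essentially the paper's argument---apply Walker's theorem to get adapted coordinates, invoke \cite[Theorem~3.1(ii.3)]{diaz-ramos-garcia-rio-vazquez-lorenzo}, and recognize the resulting coordinate expression as $g^\nabla+\pi^*\lambda$---but your forward direction has a genuine gap. You assert that in Wong's coordinates the self-dual Weyl operator of $g^\nabla+\pi^*\lambda$ has ``only one non-zero entry, proportional to $\partial_1\partial_2\varphi$,'' and that this produces the cubic nilpotent block. That cannot be literally true: a Petrov type III operator sends a basis $(X,Y,Z)$ to $(0,X,Y)$, so it has rank $2$, whereas an operator whose matrix in any basis has a single non-zero entry has rank $1$ and squares to zero---type N at best, never type III. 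Moreover, for $\lambda\ne0$ the curvature of $g^\nabla+\pi^*\lambda$ genuinely contains additional components built from second derivatives of $\lambda$ (already visible when $\nabla$ is flat and $\lambda$ generic: the metric is then Ricci-flat but non-flat even though $\rho=0$), so the self-dual Weyl operator is not a one-entry matrix; the real content of the type III claim is that these extra entries never change the Jordan type as long as the $\rho$-entry is non-zero, and that is precisely what you would have to prove. The reference covering $\lambda=0$, \cite[Theorem~9]{garcia-rio-kupeli-vazquez-abal-vazquez-lorenzo}, does not give it, and the $\lambda\ne0$ case is exactly \cite[Theorem~3.1(ii.3)]{diaz-ramos-garcia-rio-vazquez-lorenzo}---which is what the paper itself uses. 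So your forward direction either requires an honest Weyl computation your proposal does not supply (and whose expected outcome you state incorrectly), or it collapses into the paper's proof: write $g^\nabla+\pi^*\lambda$ in arbitrary coordinates as (\ref{hnp}), match it with formula (2.1) of \cite{diaz-ramos-garcia-rio-vazquez-lorenzo} via the explicit substitution dictionary, note that skew-symmetry of $\rho$ is condition (3.3) there while $\rho\ne0$ everywhere is the pointwise failure of (3.4), and obtain both directions from Theorem~3.1(ii.3) together with Walker's theorem; no passage to Wong's normal form is needed.

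A smaller point about your converse: you construct the projected connection via (b) of Section~\ref{wmap} before knowing that the curvature condition (\ref{rvu}) holds for the abstract $(M,g)$, yet (b) presupposes (\ref{rvu}). In the paper's ordering this is harmless, because one first passes to the coordinate form through Walker's theorem and Theorem~3.1(ii.3), after which (\ref{rvu}) and the identification of the projected connection with the original $\nabla$ are read off from the explicit expression; you should reorder your argument in the same way, and likewise drop the appeal to Theorem~\ref{wongs} there, since the coordinates produced by the classification need not be Wong coordinates---skew-symmetry and nonvanishing of $\rho$ are read off from conditions (3.3) and (3.4), not from $\partial_1\partial_2\varphi$.
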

\begin{proof}In the coordinates $\,y^{\hs j}\nnh,\xi_j$ for $\,\tab\,$ arising 
from a local coordinate system $\,y^{\hs j}$ in $\,\bs$, if we let the 
products of differentials stand for symmetric products and $\,\vg_{\!kl}^j$ 
for the components of $\,\hs\nabla\nnh$, then 
$\,\,g=g\nnh^\nabla\hn+\hh\pi^*\nnh\lambda\hs\,$ 
can be expressed as
\begin{equation}\label{hnp}
g\,=\,2\hs d\hs\xi_j\,dy^{\hs j}
+(\lambda_{kl}-\hs2\hh\xi_j\vg_{\!kl}^jw)\,dy^k\hs dy^{\hh l}.
\end{equation}
Setting $\,x_1=\xi_1$, $\,x_2=\xi_2$, $\,x_3=y^1$, $\,x_4=y^2$, 
$\,\xi=\lambda_{11}$, $\,\eta=\lambda_{22}$, $\,\gamma=\lambda_{12}$, 
$\,P=-2\vg_{\!11}^1$, $\,Q=-2\vg_{\!11}^2$, $\,S=-2\vg_{\!22}^1$, 
$\,T=-2\vg_{\!22}^2$, $\,U=-2\vg_{\!12}^1$ and $\,V=-2\vg_{\!12}^2$, one 
easily sees that (\ref{hnp}) amounts to formula (2.1) in D\'\i az\hh-Ramos, 
Garc\'\i a\hh-R\'\i o and V\'azquez\hh-Lorenzo's paper 
\cite{diaz-ramos-garcia-rio-vazquez-lorenzo}, with $\,(a,b,c)\,$ given by 
formula (3.2) in \cite{diaz-ramos-garcia-rio-vazquez-lorenzo}. 
Skew-sym\-me\-try of $\,\rho\,$ is in turn equivalent to condition (3.3) in 
\cite{diaz-ramos-garcia-rio-vazquez-lorenzo}: the three equations forming 
(3.3) state that $\,\rho_{11}=0$, $\,\rho_{22}=0$, and, respectively, 
$\,\rho_{12}+\rho_{21}=0$. Relation (3.4) in 
\cite{diaz-ramos-garcia-rio-vazquez-lorenzo} is, however, equivalent to {\it 
symmetry\/} of $\,\rho$, so that, under our assumptions about $\,\rho$, (3.4) 
is not satisfied at any point. Our claim now follows from 
\cite[Theorem~3.1(ii.3)]{diaz-ramos-garcia-rio-vazquez-lorenzo} and 
Walker's theorem \cite{walker}, cf.\ \cite[p.\ 062504-7]{derdzinski-roter-06}.
\end{proof}

\setcounter{section}{1}
\renewcommand{\thesection}{\Alph{section}}
\setcounter{thm}{0}
\renewcommand{\thethm}{\thesection.\arabic{thm}}
\section*{Appendix A. Lie subalgebras of 
$\,\mathfrak{sl}\hh(2,\bbR)$}
In a real vector space $\,\plane\,$ with $\,\dim\plane=2$, two\hh\diml\ Lie 
subalgebras $\,\hs\mathfrak{g}$ of $\,\mathfrak{sl}\hh(\plane)\,$ are in 
a bijective correspondence with one\hh\diml\ vector subspaces $\,\line\,$ of 
$\,\plane$. The correspondence assigns to $\,\line\,$ the set 
$\,\mathfrak{g}\,$ of all $\,B\in\mathfrak{sl}\hh(\plane)\,$ which leave 
$\,\line\,$ invariant. The commutant ideal 
$\,[\mathfrak{g},\mathfrak{g}]\,$ then consists of all 
$\,B\in\mathfrak{sl}\hh(\plane)\,$ with 
$\,\line\subset\mathrm{Ker}\hskip2.4ptB$. This is immediate from the 
following well-known fact.
\begin{thm}\label{lsasl}Let\/ $\,\hs\plane\,$ be a two\hh\diml\ real vector space.
\begin{enumerate}
  \def\theenumi{{\rm(\roman{enumi}}}
\item[(i)] No two\hh\diml\ Lie subalgebra of\/ $\,\mathfrak{sl}\hh(\plane)\,$ 
is Abelian.
\item[(ii)] Every two\hh\diml\ Lie subalgebra of\/ 
$\mathfrak{sl}\hh(\plane)$ has a basis\/ $A,B$ with\/ $[A,B\hh]=A$, and 
any such\/ $\,A,B\,$ have the form\/ {\rm(\ref{aww})} in some basis\/ 
$\,w,w\hh'$ of\/ $\,\plane$.
\end{enumerate}
Conversely, if\/ $\,A,B\in\mathfrak{sl}\hh(\plane)\,$ are given by\/ 
{\rm(\ref{aww})} in some basis\/ $\,w,w\hh'$ of\/ $\,\plane$, then 
%they are linearly independent, 
$\,[A,B\hh]=A$, and\/ $\,\mathrm{span}\hs\{A,B\}\,$ is a two\hh\diml\ Lie 
subalgebra of\/ $\,\mathfrak{sl}\hh(\plane)$. 
\end{thm}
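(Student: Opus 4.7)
The plan is to dispose of (i) by a centralizer computation, use (i) to extract a convenient basis for (ii), and then exhibit the normal form by a direct two-by-two matrix calculation.

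For (i), I would fix any nonzero $\,A\in\mathfrak{sl}\hh(\plane)\,$ and invoke the standard fact that, since $\,A\,$ is not a scalar multiple of $\,\mathrm{Id}\,$ (for $\,\mathrm{tr}\hs A=0\ne\mathrm{tr}\hs\mathrm{Id}$), its centralizer in $\,\mathfrak{gl}\hh(\plane)\,$ equals the two\diml\ span of $\,\mathrm{Id}\,$ and $\,A$. Intersecting with $\,\mathfrak{sl}\hh(\plane)\,$ eliminates $\,\mathrm{Id}$, leaving the one\diml\ centralizer $\,\bbR\hs A$. An Abelian two\diml\ subalgebra of $\,\mathfrak{sl}\hh(\plane)\,$ containing $\,A\,$ would be contained in this centralizer, which is impossible.

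For the bracket normalization in (ii), assertion (i) forces $\,[\mathfrak{g},\mathfrak{g}]\ne\{0\}$, so a dimension count yields $\,\dim\hs[\mathfrak{g},\mathfrak{g}]=1$, spanned by some nonzero $\,A$. Picking any $\,B\hh'\,$ completing $\,A\,$ to a basis of $\,\mathfrak{g}$, one has $\,[A,B\hh']=c\hh A\,$ with $\,c\ne0\,$ (else $\,\mathfrak{g}\,$ would be Abelian, contradicting (i)), so that $\,B=B\hh'\nh/c\,$ satisfies $\,[A,B]=A$.

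The crucial observation for the normal form is that $\,A\,$ must be a nonzero nilpotent with $\,A^2=0$. Indeed, by cyclicity of the trace, $\,\mathrm{tr}\hs(A^2)=\mathrm{tr}\hs(A\hh[A,B])=\mathrm{tr}\hs(A^2B)-\mathrm{tr}\hs(ABA)=0$, and combined with $\,\mathrm{tr}\hs A=0\,$ the Cay\-ley-Ham\-il\-ton theorem in dimension two forces $\,A^2=0$. Choosing $\,w\in\plane\,$ with $\,Aw\ne0\,$ and setting $\,w\hh'=Aw\,$ gives the first two relations in (\ref{aww}). Writing $\,B\,$ as a matrix in the basis $\,w,w\hh'\,$ and imposing both $\,\mathrm{tr}\hs B=0\,$ and $\,[A,B]=A\,$ forces $\,Bw=w\nh/2+\beta w\hh'\,$ and $\,Bw\hh'=-\hs w\hh'\nh/2\,$ for some $\,\beta\in\bbR$. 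Replacing $\,w\,$ by $\,w+\beta w\hh'\,$ preserves $\,Aw=w\hh'\,$ and $\,Aw\hh'=0\,$ (since $\,w\hh'\in\ker A$) while killing the residual $\,\beta\,$ in $\,B$, yielding (\ref{aww}).

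The converse is a two-line verification: the operators in (\ref{aww}) are visibly traceless, and evaluating $\,[A,B]\,$ on $\,w\,$ and $\,w\hh'\,$ reproduces $\,Aw=w\hh'\,$ and $\,Aw\hh'=0$, so $\,[A,B]=A\,$ and $\,\mathrm{span}\hs\{A,B\}\,$ is closed under bracket. The only mildly delicate step is the basis adjustment at the end of the normal-form argument, but it is effortless once one notes that $\,w\hh'\in\ker A\,$ makes any modification of $\,w\,$ by a multiple of $\,w\hh'\,$ invisible to $\,A$.
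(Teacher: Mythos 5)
Your proof is correct, but it travels a different route from the paper's. The paper proves (i) by making $\,\mathfrak{sl}\hh(\plane)\,$ into a Lorentzian $\,3$-space via $\,2\langle A,B\rangle=\mathrm{tr}\,AB\,$ and identifying the bracket with twice the vector product, so that $\,[A,B\hh]\ne0\,$ for linearly independent $\,A,B$; you instead invoke the fact that the centralizer of a non\hh-scalar $\,2\times2\,$ matrix is $\,\mathrm{span}\hs\{\mathrm{Id},A\}$, which is more elementary but does not yield the paper's byproduct, namely the bijection between two\hh\diml\ subalgebras and $\,\langle\,,\rangle$-null lines (this underlies the correspondence with lines in $\,\plane\,$ stated just before the theorem, though the theorem itself does not need it). In the normal-form part the two arguments coincide in substance up to the key fact $\,A^2\nh=0$: the paper gets it from $\,A\in\mathfrak{g}^\perp$ being null, you from cyclicity of the trace ($\,\mathrm{tr}\hs(A\hh[A,B\hh])=0\,$) plus Cayley--Hamilton --- the same invariance of the trace form in different clothing. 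After that you diverge: the paper first shows $\,B\,$ is diagonalizable with real eigenvalues $\,\pm\hs c\ne0\,$ and eigenvector in $\,\mathrm{Ker}\hskip2ptA$, chooses an eigenbasis, rescales $\,w\,$ so that $\,Aw=w\hh'\nnh$, and extracts $\,c=1/2\,$ from $\,[A,B\hh]=A$; you build the basis from a cyclic vector of $\,A\,$ ($\,w\hh'\nh=Aw$), solve the matrix constraints to get $\,Bw=w/2+\beta\hh w\hh'\nnh$, $\,Bw\hh'\nh=-\hs w\hh'\nh/2$, and then kill $\,\beta\,$ by the substitution $\,w\mapsto w+\beta\hh w\hh'\nnh$, which is legitimate since $\,w\hh'\in\mathrm{Ker}\hskip2ptA$. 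Your version avoids discussing eigenvalues of $\,B\,$ altogether, at the cost of a short explicit matrix computation; the only (trivial) point left unsaid in your converse is that $\,A,B\,$ in (\ref{aww}) are linearly independent (e.g.\ $\,Aw\hh'\nh=0\ne Bw\hh'$), so that $\,\mathrm{span}\hs\{A,B\}\,$ is indeed two\hh\diml.
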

\begin{proof}A bi\-lin\-e\-ar form $\,\langle\,,\rangle\,$ in 
$\,\mathfrak{sl}\hh(\plane)\,$ defined by 
$\,2\langle A,B\rangle=\mathrm{tr}\,AB$, or, equivalently, 
$\,\langle A,A\rangle=-\det A$, has the Lo\-rentz\-i\-an signature 
$\,(\mpp)$, as one sees using the matrix representation. Also, $\,[A,B\hh]\,$ 
is $\,\langle\,,\rangle$-or\-thog\-o\-nal to $\,A\,$ and $\,B$, for 
$\,A,B\,$ in $\,\mathfrak{sl}\hh(\plane)$, as 
$\,\langle[A,B\hh],A\rangle=\langle A,BA\rangle-\langle BA,A\rangle=0\,$ 
(which is nothing else than bi-in\-var\-i\-ance of the Kil\-ling form 
$\,\langle\,,\rangle$). Thus, the $\,3$-form $\,\mu\,$ on 
$\,\mathfrak{sl}\hh(\plane)\,$ with 
$\,2\hs\mu(A,B,C)=\langle[A,B\hh],C\rangle\,$ is skew-sym\-met\-ric. 
Furthermore, $\,\mathfrak{sl}\hh(\plane)\,$ carries a unique orientation such 
that $\,\mu(A,B,C)=1\,$ for any pos\-i\-tive-o\-rient\-ed 
$\,(\mpp)$-or\-tho\-nor\-mal basis $\,A,B,C\,$ of 
$\,\mathfrak{sl}\hh(\plane)$, since this is the case for the basis
\[
A=\left[\begin{matrix}0&1\\-\nh1&0\end{matrix}\right],\hskip15pt
B=\left[\begin{matrix}0&1\\1&0\end{matrix}\right],\hskip15pt
C=\left[\begin{matrix}1&0\\0&-\nh1\end{matrix}\right]
\]
when $\,\plane=\rto\nnh$. As 
$\,\langle[A,B\hh],\,\cdot\,\rangle=2\hs\mu(A,B,\,\cdot\,)$, the commutator 
operation in $\,\mathfrak{sl}\hh(\plane)$ equals twice the vector product in 
our oriented pseu\-\hbox{do\hs-}Euclid\-e\-an $\,3$-space, with the volume 
form $\,\mu$. Hence $\,[A,B\hh]\ne0\,$ when $\,A,B\,$ are 
linearly independent: completing them to a basis $\,A,B,C$, we get 
$\,\langle[A,B\hh],C\rangle=2\hs\mu(A,B,C)\ne0$. This proves (i).

By assigning to every two\hh\diml\ (non\hs-Abel\-i\-an) Lie subalgebra 
$\,\mathfrak{g}\,$ of $\,\mathfrak{sl}\hh(\plane)\,$ its 
$\,\langle\,,\rangle$-or\-thog\-o\-nal complement $\,\mathfrak{g}^\perp\nnh$, 
which coincides with $\,[\mathfrak{g},\mathfrak{g}]$, we obtain a bijective 
correspondence between the set of such $\,\mathfrak{g}\,$ and the set of all 
$\,\langle\,,\rangle$-null lines (one\hh\diml\ vector sub\-spaces) in 
$\,\mathfrak{sl}\hh(\plane)$. In fact, if $\,A,B\,$ is a basis of 
$\,\mathfrak{g}\,$ and $\,[A,B\hh]=A$, skew-sym\-me\-try of $\,\mu\,$ shows 
that $\,A\,$ is orthogonal to both $\,A$ and $\,B\,$ and hence spans the 
null line $\,\mathfrak{g}^\perp\nnh=[\mathfrak{g},\mathfrak{g}]$. Conversely, 
a vector subspace $\,\mathfrak{g}\,$ of $\,\mathfrak{sl}\hh(\plane)\,$ such 
that $\,\mathfrak{g}^\perp$ is a null line must be a Lie subalgebra, since 
$\,\mathfrak{g}^\perp\nnh\subset\mathfrak{g}$, and a basis $\,A,B\,$ of 
$\,\mathfrak{g}\,$ with $\,A\in\mathfrak{g}^\perp$ has 
$\,\langle[A,B\hh],A\rangle=2\hs\mu(A,B,A)=0$, and so 
$\,[A,B\hh]\in\mathfrak{g}^{\perp\hs\perp}\nnh=\mathfrak{g}$.

That any linearly independent pair $\,A,B\,$ in $\,\mathfrak{sl}\hh(\plane)\,$ 
with $\,[A,B\hh]=A\,$ has the form (\ref{aww}) in some basis $\,w,w\hh'$ of 
$\,\plane\,$ can be seen as follows. We have 
$\,A\in[\mathfrak{g},\mathfrak{g}]=\mathfrak{g}^\perp\nnh$, where 
$\,\mathfrak{g}=\mathrm{span}\hs\{A,B\}$, and so $\,A\,$ is 
$\,\langle\,,\rangle$-null, that is, 
$\,\mathrm{tr}\,A=\det A=0$. In a basis of $\,\plane\,$ containing an 
element of $\,\mathrm{Ker}\hskip2ptA$, the matrix representing 
$\,A\,$ is therefore triangular, with zeros on the diagonal, so that 
$\,A^2\nh=0$, while $\,A\ne0$. Thus, 
$\,A(\plane)\subset\mathrm{Ker}\hskip2ptA\,$ and, as both spaces are 
one\hh\diml, $\,A(\plane)=\mathrm{Ker}\hskip2ptA$. The relation 
$\,[A,B\hh]=A\,$ implies in turn that $\,\mathrm{Ker}\hskip2ptA\,$ invariant 
under $\,B$, and so $\,B\,$ has real characteristic roots. Since 
$\,\mathrm{tr}\,B=0$, the two roots must be nonzero, or else we would have 
$\,\mathrm{Ker}\hskip2ptB=\mathrm{Ker}\hskip2ptA\,$ and, in a basis containing 
an element of $\,\mathrm{Ker}\hskip2ptA$, the matrices of both $\,A\,$ and 
$\,B\,$ would be triangular, with zeros on the diagonal, contradicting the 
linear independence of $\,A\,$ and $\,B$. Thus, $\,B\,$ is 
di\-ag\-o\-nal\-izable, with some nonzero eigenvalues $\,\pm\hs c\,$ such that 
$\,\mathrm{Ker}\hskip2ptA=\mathrm{Ker}\hskip2pt(B+c)$. Choosing a basis 
$\,w,w\hh'$ of $\,\plane\,$ di\-ag\-o\-nal\-iz\-ing $\,\hs B\,$ with 
$\,w\hh'\nh\in\mathrm{Ker}\hskip2ptA$, we may rescale $\,w\,$ so that 
$\,Aw=w\hh'$ (since 
$\,A(\plane)=A(\mathrm{Ker}\hskip2pt(B-c))=\mathrm{Ker}\hskip2ptA$). Applying 
$\,[A,B\hh]=A\,$ to $\,w\,$ we now get $\,c=1/2$, which yields (\ref{aww}), 
proving (b).
\end{proof}

\setcounter{section}{2}
\setcounter{thm}{0}
\section*{Appendix B. Local Lie\hs-group structures}
In this appendix we state and prove Theorem~\ref{latri}, a well-known result, 
included here to provide a convenient reference for the proof of 
Lemma~\ref{linvt}(i).

Given a real\hs/com\-plex \vs\ $\,\hs\x\,$ of sections of a real\hs/com\-plex 
vector bundle $\,\bv\,$ over a \mf\ $\,\bs$, we will say that $\,\x\,$ {\it 
trivializes\/} $\,\bv\,$ if, for every $\,y\in\bs$, the evaluation operator 
$\,\psi\mapsto\psi_y$ is an isomorphism $\,\x\to\bv_y$. This 
amounts to requiring that $\,\dim\hskip1pt\x\,$ coincide with the fibre dimension 
of $\,\hs\bv\hn\,$ and each $\,v\in\x\,$ be either identically zero, or nonzero 
at every point of $\,\bs$. In other words, some (or any) basis of $\,\x\,$ 
should form a trivialization of $\,\bv$.
\begin{thm}\label{latri}Let a Lie algebra\/ $\,\hs\x\hn\,$ of vector fields on 
a simply connected manifold\/ $\,\bs\,$ trivialize its tangent bundle\/ $\,\tb$, 
and let\/ $\,\Psi:\x\to\mathfrak{g}\,$ be any Lie-algebra isomorphism 
between\/ $\,\hs\x\hn\,$ and the Lie algebra\/ $\,\hs\mathfrak{g}\hn\,$ of 
left-in\-var\-i\-ant vector fields on a Lie group\/ $\,G$. Then there exists a 
mapping\/ $\,F:\bs\to G\,$ such that every\/ $\,v\in\x$ is 
$\,F\nh$-pro\-ject\-a\-ble onto\/ $\,\Psi v$. Any such mapping\/ $\,\hs F\nh\,$ 
is, locally, a dif\-feo\-mor\-phism, and\/ $\,\Psi$ determines\/ $\,\hs F\nh\,$ 
uniquely up to compositions with left translations in\/ $\,G$.
\end{thm}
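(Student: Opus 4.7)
The plan is to apply the Frobenius theorem on the product manifold $\bs\times G$. On $\bs\times G$, define a distribution $\mathcal{D}$ of rank $n=\dim\bs=\dim\gi$ by declaring its fiber at $(y,g)$ to be the graph subspace $\{(v_y,(\Psi v)_g):v\in\x\}\subset T_y\bs\oplus T_gG$; since $\x$ trivializes $\tb$ and $\gi$ trivializes $TG$, this is a smooth rank-$n$ distribution transverse to every vertical fiber $\{y\}\times G$. For $v,w\in\x$, writing $\tilde v=(v,\Psi v)$ as a vector field on $\bs\times G$, the bracket computes as $[\tilde v,\tilde w]=([v,w]_\bs,[\Psi v,\Psi w]_G)=([v,w]_\bs,\Psi[v,w])=\widetilde{[v,w]}$, using that $\Psi$ is a Lie-algebra homomorphism. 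Hence $\mathcal{D}$ is involutive, and Frobenius produces a foliation tangent to $\mathcal{D}$.

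Fix $y_0\in\bs$ and let $\Lambda$ be the leaf through $(y_0,e)$, with $e\in G$ the identity. Transversality of $\mathcal{D}$ to vertical fibers makes the first projection $\pi_1|_\Lambda:\Lambda\to\bs$ a local diffeomorphism, and the plan is to upgrade it to a global diffeomorphism by path lifting. Given a smooth curve $\gamma:[0,1]\to\bs$ starting at $y_0$, pick a basis $v_1,\dots,v_n$ of $\x$ and expand $\dot\gamma(t)=\sum_i f_i(t)(v_i)_{\gamma(t)}$. The companion ODE $\dot g(t)=\sum_i f_i(t)(\Psi v_i)_{g(t)}$ on $G$ is, by left-invariance of the $\Psi v_i$, equivalent to $\dot g(t)=g(t)\cdot\alpha(t)$ with $\alpha(t)=\sum_i f_i(t)(\Psi v_i)_e\in\gi$, a time-dependent linear equation on $G$ with coefficients bounded on $[0,1]$, so its solution starting at $g(0)=e$ exists on the full interval. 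Then $t\mapsto(\gamma(t),g(t))$ lifts $\gamma$ to $\Lambda$; combined with simple connectedness of $\bs$, this yields that $\pi_1|_\Lambda$ is a covering and hence a diffeomorphism, so $\Lambda$ is the graph of a unique smooth $F:\bs\to G$ with $F(y_0)=e$.

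Unwinding the graph condition, $T_{(y,F(y))}\Lambda=\mathcal{D}_{(y,F(y))}$ is exactly the graph of $dF_y$, so $dF_y(v_y)=(\Psi v)_{F(y)}$ for every $v\in\x$, which is the required $F$-projectability; since $\Psi$ is an isomorphism and $\gi$ trivializes $TG$, each $dF_y$ is invertible, so $F$ is a local diffeomorphism. For uniqueness up to left translations, any other map $F':\bs\to G$ enjoying the same projectability property has its graph integral to $\mathcal{D}$ and passing through $(y_0,F'(y_0))$; the left translation $L_{F'(y_0)}:G\to G$ preserves $\gi$ and hence $\mathcal{D}$, sending the leaf through $(y_0,e)$ to the leaf through $(y_0,F'(y_0))$, whence $F'=L_{F'(y_0)}\circ F$. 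The principal technical obstacle is the global path-lifting step: one must exploit the completeness of the left-invariant fields $\Psi v_i$ (manifested in the linear equation $\dot g=g\cdot\alpha(t)$) to guarantee that lifts of $\gamma$ exist on the full parameter interval, even when the $v_i$ themselves fail to be complete on $\bs$; this completeness on the $G$-side is what allows the transversal coordinate of the lift to be extended as far as $\gamma$ goes.
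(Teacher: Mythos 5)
Your construction is essentially the one the paper uses: your graph distribution $\mathcal{D}$ on $\bs\times G$ is exactly the horizontal distribution $\mathcal{H}$ of the paper's proof, your bracket computation $[\tilde v,\tilde w]=\widetilde{[v,w]}$ is its flatness (involutivity) check, the leaf through $(y_0,e)$ is the graph of $F$, and uniqueness up to left translations comes, in both arguments, from the invariance of the distribution under the $G$-action on the second factor. The one genuine difference is how the global statement is obtained: the paper regards $\mathcal{H}$ as a flat connection in the trivial principal bundle $\bs\times G\to\bs$ and quotes Kobayashi--Nomizu (Vol.\ I, Corollary 9.2) to conclude that, $\bs$ being simply connected, each leaf projects diffeomorphically onto $\bs$, whereas you prove this directly by horizontal path lifting, reducing global existence of lifts to solvability of $\dot g=g\cdot\alpha(t)$ on $[0,1]$. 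That is the right technical pivot, and it is precisely what underlies the cited corollary; your route is more self-contained, the paper's is shorter. Two small points in your version deserve more care: calling $\dot g=g\cdot\alpha(t)$ ``linear'' is literally correct only for matrix groups, and the general argument is that left invariance makes the escape time independent of the initial value, so a uniform extension step carries the solution across all of $[0,1]$; and to upgrade ``local diffeomorphism with path lifting'' to ``covering'' you should note that the lifts depend smoothly on their starting data (smooth dependence of ODE solutions on parameters), which is what produces evenly covered neighborhoods before you invoke simple connectedness. With these remarks supplied, your proof is complete and correct.
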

\begin{proof}Given $\,(y,z)\in\bs\times G$, let 
$\,K_{y,z}:\tyb\to T_{(y,z)}(\bs\times G)=\tyb\times T_zG$ \hbox{be the linear} 
operator with $\,K_{y,z}u=(u,(\Psi u')_z)\,$ for $\,u'\in\x\,$ characterized 
by $\,u'_y=u\in\tyb$. Since $\,\Psi u'$ is left\inv, the formula 
$\,\mathcal{H}_{(y,z)}=K_{y,z}(\tyb)$ defines a vector subbundle of 
$\,T(\bs\times G)$, invariant under the left action of $\,G\,$ on 
$\,\bs\times G$. Thus, $\,\mathcal{H}\,$ is (the horizontal distribution of) a 
$\,G$-connection in the trivial $\,G$-principal bundle over $\,\bs\,$ 
with the total space $\,\bs\times G$.

The distribution $\,\mathcal{H}\,$ on $\,\bs\times G\,$ is integrable, that 
is, our $\,G$-connection is flat. In other words, the 
$\,\mathcal{H}$-horizontal lift operation $\,\hs v\hs\mapsto\tilde v$, applied 
to vector fields $\,\hs v,w$ on $\,\bs\,$ is a Lie\hs-algebra homomorphism. In 
fact, $\,\tilde v_{(y,z)}=(v_y,(\Psi v\hh')_z)$, with $\,v'\nh\in\x$ such that 
$\,v\hh'_{\nh y}=v_y$. Choosing in $\,\x\,$ a basis $\,e_j$, $\,j=1,\dots,n$, we 
have $\,v=v^{\hs j}e_j$, $\,w=w^{\hs j}e_j$, 
$\,[\hh e_j,e_k]=c^{\hs l}_{jk}\hs e_{\hh l}$ and 
$\,[\Psi e_j,\Psi e_k]=c^{\hs l}_{jk}\hs\Psi e_{\hh l}$ for some real numbers 
$\,c^{\hs l}_{jk}$ and functions $\,v^{\hs j},w^{\hs j}\nnh$. (The indices 
$\,j,k,l=1,\dots,n$, if repeated, are summed over.) Thus, 
$\,\tilde v=(v,v^{\hs j}\Psi e_j)$, that is, 
$\,\tilde v_{(y,z)}=(v_y,v^{\hs j}(y)(\Psi e_j)_z)$, and similarly for $\,w$. 
Hence $\,[\tilde v,\tilde w]
=([v,w],(d_vw^{\hh l}-d_wv^{\hh l}+v^{\hs j}w^kc^{\hs l}_{jk})\Psi e_{\hh l})$, as 
required: namely, $\,[v,w]=[v^{\hs j}e_j,w^ke_k]
=(d_vw^{\hh l}-d_wv^{\hh l}+v^{\hs j}w^kc^{\hs l}_{jk})\hs e_{\hh l}$, and so 
$\,[v,w]^l=d_vw^{\hh l}-d_wv^{\hh l}+v^{\hs j}w^kc^{\hs l}_{jk}$.

Therefore, as $\,\bs\,$ is simply connected, $\,\bs\times G\,$ is the disjoint 
union of the leaves of $\,\mathcal{H}$, and the projection 
$\,\pi:\bs\times G\to\bs\,$ maps each leaf $\,N\hs$ 
dif\-feo\-mor\-phi\-cal\-ly onto $\,\bs\,$ (cf.\ 
\cite[Vol.\hskip1.9ptI, Corollary~9.2, p.\ 92]{kobayashi-nomizu}). On the 
other hand, one easily sees that a mapping $\,F\,$ has the properties claimed 
in our assertion if and only if $\,d\varXi_y=K_{y,F(y)}$ for all $\,y\in\bs$, 
where $\,\varXi:\bs\to\bs\times G\,$ is given by $\,\varXi(y)=(y,F(y))$. 
Equivalently, $\,\varXi\,$ is required to be an $\,\mathcal{H}$-horizontal 
section of the $\,G$-bundle $\,\bs\times G$, that is, the inverse 
dif\-feo\-mor\-phism $\,\bs\to N\,$ of $\,\pi:N\to\bs\,$ for some leaf 
$\,N\hs$ of $\,\mathcal{H}$. The existence of $\,F\,$ and its uniqueness up 
to left translations are now immediate, while such $\,F\,$ is, locally, a 
dif\-feo\-mor\-phism in view of the inverse maping theorem. This completes the 
proof.
\end{proof}

\setcounter{section}{3}
\setcounter{thm}{0}
\section*{Appendix C. La\-grang\-i\-ans and Ham\-il\-ton\-i\-ans}
\setcounter{equation}{0}
A more detailed exposition of the topics oulined here can be found in 
\cite{mackey}.

We use the same symbol $\,\bv,$ for the total space of a vector bundle 
$\,\bv\,$ over a manifold $\,\bs\,$ as for the bundle itself, identifying 
each fibre $\,\bv_y$, $\,y\in\bs$, with the submanifold $\,\pi^{-1}(y)$ of 
$\,\bv$, where $\,\pi:\bv\to\bs\,$ is the bundle projection. (Thus, $\,\tb$ 
and $\,\tab\,$ are manifolds.) As a set, 
$\,\bv=\{(y,\psi):y\in\bs,\hskip6pt\psi\in\bv_y\}$.

The identity mapping $\,\plane\to\plane\,$ in a real vector space 
$\,\plane\,$ with $\,\dim\plane<\infty$, treated as a vector field on 
$\,\plane$, is called the {\it radial vector field\/} on $\,\plane$. On 
the total space $\,\bv\,$ of any vector bundle over a manifold $\,\bs\,$ 
we have the {\it radial vector field}, denoted here by $\,\mathbf{x}$, 
which is vertical (tangent to the fibres) and, restricted to each fibre 
of $\,\bv$, coincides with the radial field on the fibre.

By a {\it La\-grang\-i\-an\/} $\,L:U\to\bbR\hs$, or, respectively, a 
{\it Ham\-il\-ton\-i\-an\/} $\,\hs H:\ust\nnh\to\bbR$ in a manifold $\,\bs\,$ 
one means a function on a nonempty open set $\,U\subset\tb\,$ or 
$\,\ust\nnh\subset\tb$. The {\it Le\-gendre mapping} $\,\,U\to\tab$, or 
$\,\ust\nnh\to\tb$, associated with $\,L\,$ or $\,H$, is defined by 
requiring that, for each $\,y\in\bs$, it send any $\,v\in U\cap\tyb\,$ or 
$\,\xi\in\ust\nnh\cap\tayb\,$ to the differential of $\,L:U\cap\tyb\to\bbR\,$ 
(or, of $\,H:\ust\nnh\cap\tyb\to\bbR$) at $\,v\,$ (or at $\,\xi$), which is 
an element of $\,T_v^*(U\cap\tyb)=\tayb\subset\tab\,$ or, respectively, 
of $\,T_\xi^*(\ust\nnh\cap\tayb)=\tyb\subset\tb$. We call such a 
La\-grang\-i\-an $\,L:U\to\bbR\,$ or Ham\-il\-ton\-i\-an 
$\,H:\ust\nnh\to\bbR\,$ in $\,\bs\,$ {\it nonsingular\/} if the 
associated Le\-gendre mapping is a dif\-feo\-mor\-phism $\,\,U\to \ust$, 
or $\,U\to \ust$ (then referred to as the Le\-gendre {\it transformation\/}), 
for some open set $\,\,\ust\nnh\subset\tab\,$ or, respectively, 
$\,\,U\subset\tb$. Nonsingular La\-grang\-i\-ans $\,L\,$ in $\,\bs\,$ are 
in a natural bijective correspondence with nonsingular Ham\-il\-ton\-i\-ans 
$\,H\,$ in $\,\bs$. Namely, if $\,L:U\to\bbR\,$ is nonsingular, we define 
$\,H:U\to\bbR\,$ by $\,H=\hs d_{\mathbf{x}}L-L$, for the radial vector 
field $\,\mathbf{x}\,$ mentioned above, and then use the Le\-gendre 
transformation to identify $\,\,U\,$ with $\,\,\ust$, so that 
$\,H\,$ becomes a function $\,\,\ust\nnh\to\bbR\hs$. A nonsingular 
Ham\-il\-ton\-i\-an $\,H:\ust\nnh\to\bbR\,$ similarly gives rise to 
$\,L:\ust\nnh\to\bbR\,$ with $\,L=\hs d_{\mathbf{x}}H-H\,$ that may be 
viewed as a function $\,L:U\to\bbR\hs$. We will write 
$\,L\leftrightarrow H\,$ if $\,L\,$ and $\,H\,$ correspond to each other 
under the assignments $\,L\mapsto H\,$ and $\,H\mapsto L\,$ (easily seen 
to be each other's inverses).

A La\-grang\-i\-an $\,L:U\to\bbR\,$ and a Ham\-il\-ton\-i\-an 
$\,H:\ust\nnh\to\bbR\,$ in $\,\bs\,$ both give rise to {\it equations of 
motion}. For $\,L\,$ these are the {\it Eu\-ler-La\-grange equations}, imposed 
on curves $\,t\mapsto y(t)\in\bs\,$ the velocity of which, viewed as a curve 
$\,t\mapsto v(t)\in\tb$, lies entirely in $\,\,U\nh$, while $\,H\,$ leads to 
{\it Ham\-il\-ton's equations}, imposed on curves 
$\,t\mapsto(y(t),\hs\xi(t))\in\ust$. In the coordinates 
$\,y^{\hs j}\nnh,v^{\hs j}$ for $\,\tb\,$ (or, $\,y^{\hs j}\nnh,\xi_j$ for 
$\,\tab$), induced by a local coordinate system $\,y^{\hs j}$ in $\,\bs$, the 
former read $\,[\hs\partial L/\partial v^{\hs j}]\hs\dot{\,}
=\hs\partial L/\partial y^{\hs j}\nnh$, and the latter 
$\,\dot y^{\hs j}\nh=\hs\partial H\hn/\partial\xi_j$, 
$\,\dot\xi_j=-\hs\partial H\hn/\partial y^{\hs j}\nnh$, with 
$\,(\hskip2.5pt)\hs\dot{\,}=\hh d/dt$. Both systems of equations can be 
rephrased in co\-or\-di\-nate\hs-free terms: the former 
characterizes curves pa\-ram\-e\-trized by closed intervals 
$\,[a,b\hh]\,$ which are fixed-ends critical points of the {\it 
action functional\/} given by $\,\int_a^{\hs b}L(v(t))\,dt$, while the 
latter describes the integral curves of the unique vector field 
$\,X\hskip-1.7pt_H$ on $\,\ust$ with 
$\,\sym(X\hskip-1.7pt_H,\,\cdot\,)=\hh dH$, where $\,\sym\,$ is the 
symplectic form on $\,\tab\,$ (see Remark~\ref{sympl} below). If such 
$\,L\,$ and $\,H\,$ are both nonsingular and $\,L\leftrightarrow H$, 
the Le\-gendre transformation maps the set of solutions of the 
Eu\-ler-La\-grange equations for $\,L\,$ bijectively onto the set of 
solutions of Ham\-il\-ton's equations for $\,H$.

By a {\it frac\-tion\-al-lin\-e\-ar function\/} in a two\hh\diml\ real 
vector space $\,\plane\,$ we mean any rational function of the form 
$\,\eta/\nh\zeta$, defined on a nonempty open subset of 
$\,\plane\smallsetminus\mathrm{Ker}\,\zeta$, where $\,\zeta,\eta\in\plane^*$ 
are linearly independent functionals. Similarly, given a real vector bundle 
$\,\bp\,$ of fibre dimension $\,2\,$ over a manifold, a function 
$\,\,U\to\bbR\,$ on an open set $\,\,U\,$ in the total space $\,\bp\,$ will 
be called {\it frac\-tion\-al-lin\-e\-ar\/} if its restriction to every 
nonempty intersection $\,\,U\cap\bp_{\hskip-2pty}$, for $\,y\in\bs$, is 
frac\-tion\-al-lin\-e\-ar.
\begin{rem}\label{frcli}For $\,\plane,\zeta,\eta\,$ as above, 
$\,d\hh(\nh\eta/\nh\zeta)=\zeta^{-2}(\zeta\hh d\eta-\eta\hskip1ptd\zeta)\,$ is 
easily verified to be a dif\-feo\-mor\-phism 
$\,\plane\smallsetminus\mathrm{Ker}\,\zeta
\to\plane^*\nnh\smallsetminus\mathrm{Ker}\,w\,$ with the inverse 
dif\-feo\-mor\-phism $\,d\hh(\hn v/\hn w)$, where $\,v,w\,$ is the basis of 
$\,\plane\,$ dual to $\,\zeta,\eta$. Note that $\,v/\hn w\,$ then is a 
frac\-tion\-al-lin\-e\-ar function 
$\,\plane^*\nnh\smallsetminus\mathrm{Ker}\,w\to\bbR\hs$.
\end{rem}
\begin{rem}\label{sympl}The total space $\,\tab\,$ of the cotangent bundle of 
any manifold $\,\bs$ carries the symplectic form $\,\sym=\hs d\hh\kappa$, 
where $\,\kappa\,$ is the {\it canonical\/ $\,1$-form}, defined by 
$\,\kappa_\xi(u)=\xi(d\pi_\xi u)\,$ for any 
$\,\xi\in\tab\,$ and $\,u\in T_\xi(\tab)\,$ (so that, at the same time, 
$\,\xi\in\tayb\,$ for $\,y=\pi(\xi)$). In coordinates $\,y^{\hs j}\nnh,\xi_j$ 
as above, $\,\kappa=\xi_j\hh dy^{\hs j}$ and 
$\,\sym=d\hs\xi_j\wedge dy^{\hs j}\nnh$.
\end{rem}
\begin{thm}\label{flham}Given vector fields\/ $\,v,w\,$ trivializing the tangent 
bundle\/ $\,\tb\,$ of a surface $\,\bs$,  let us define a La\-grang\-i\-an\/ 
$\,L:U\to\bbR\,$ and Ham\-il\-ton\-i\-an\/ $\,\hs H:\ust\nnh\to\bbR$ in $\,\bs$, 
both frac\-tion\-al-lin\-e\-ar, by $\,L=\eta/\nh\zeta\,$ and\/ $\,H=v/\hn w$, 
where\/ $\,\zeta,\eta\,$ are the $\,1$-forms dual to $\,v,w\,$ at each point, 
treated as functions $\,\tb\to\bbR\,$ linear on each fibre, and\/ $\,v,w\,$ 
are similarly viewed as functions $\,\tab\to\bbR\,$ linear on the fibres, while 
$\,\,U=\tb\smallsetminus\mathrm{Ker}\,\zeta\,$ and\/ 
$\,\,\ust\nh=\tab\smallsetminus\mathrm{Ker}\,w\,$ are the complements 
in $\,\tb\,$ and\/ $\,\tab\,$ of the total spaces of the line 
sub\-bun\-dles $\,\mathrm{Ker}\,\zeta\,$ and\/ $\,\mathrm{Ker}\,w$. 
Then\/ $\,L,H\,$ are both nonsingular, $\,L\leftrightarrow H\,$ under 
the Le\-gendre transformation, and the solutions of the Eu\-ler-La\-grange 
equations for\/ $\,L\,$ are precisely the curves $\,t\mapsto y(t)\in\bs\,$ 
with
\begin{equation}\label{dyy}
\mathrm{D}_{\dot y}\dot y\,\,=\,\,\tau(\dot y)\hs\dot y\hs, 
\end{equation}
where\/ $\,\mathrm{D}\,$ is the flat connection on $\,\bs\,$ such that\/ 
$\,u,v\,$ are $\,\mathrm{D}\hs$-par\-al\-lel, and\/ $\,\hs\tau\,$ is the 
torsion $\,1$-form of $\,\mathrm{D}$, cf.\ Section~\ref{prel}.
\end{thm}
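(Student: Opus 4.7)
The plan is to transfer the Euler-Lagrange problem to Hamilton's on $\tab$ via the Legendre transform, which in the frame $v,w$ and the dual coframe $\zeta,\eta$ becomes entirely explicit.

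First, I introduce fibre coordinates $(\alpha,\beta)$ on $\tb$ by writing a tangent vector at $y$ as $\psi=\alpha v_y+\beta w_y$, and dually $(p,q)$ on $\tab$ by $\xi=p\zeta_y+q\eta_y$. Fibrewise one then has $L=\beta/\alpha$ and $H=p/q$, both frac\-tion\-al-lin\-e\-ar. A direct computation of $d(L|_{\tyb})=-(\beta/\alpha^2)\,\zeta_y+(1/\alpha)\,\eta_y$ exhibits the Legendre mapping as $(y;\alpha,\beta)\mapsto(y;-\beta/\alpha^2,1/\alpha)$, with smooth inverse $(y;p,q)\mapsto(y;1/q,-p/q^2)$, so $L$ and $H$ are both nonsingular with mutually inverse Legendre dif\-feo\-mor\-phisms $U\leftrightarrow\ust$. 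Since $L$ is fibrewise homogeneous of degree $0$, $d_{\mathbf{x}}L=0$, so the Legendre dual of $L$ should equal $-L$; indeed $-p/q=\beta/\alpha$ under the change of variables, confirming $L\leftrightarrow H$.

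Second, I derive Hamilton's equations. Because $v,w$ are $\mathrm{D}$-par\-al\-lel, so is the dual coframe: $\mathrm{D}\zeta=\mathrm{D}\eta=0$. Applying (\ref{dxt}) to $\mathrm{D}$, whose torsion $1$-form is $\tau$, yields $d\zeta=\tau\wedge\zeta$ and $d\eta=\tau\wedge\eta$. The canonical $1$-form $\kappa=p\zeta+q\eta$ therefore gives
\[
\sym\,=\,d\kappa\,=\,dp\wedge\zeta+dq\wedge\eta+\tau\wedge\kappa.
\]
Writing $X_H=A+P\,\partial_p+Q\,\partial_q$ with $A=d\pi(X_H)\in\tyb$, and matching coefficients of $dp,dq,\zeta,\eta$ in $\iota_{X_H}\sym=-dH$, I obtain $\zeta(A)=1/q$ and $\eta(A)=-p/q^2$ -- so $A$ is exactly the Legendre preimage of $\xi$, and $\xi(A)=0$ -- followed by $P=-p\,\tau(A)$ and $Q=-q\,\tau(A)$.

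Third, I project to $\bs$. Along an integral curve of $X_H$, the equations $\dot q=Q=-q\,\tau(\dot y)$ and $\dot p=P=-p\,\tau(\dot y)$ translate, under $\alpha=1/q$, $\beta=-p/q^2$, into $\dot\alpha=\tau(\dot y)\,\alpha$ and $\dot\beta=\tau(\dot y)\,\beta$. Since $\mathrm{D}v=\mathrm{D}w=0$ and $\dot y=\alpha v+\beta w$, this reads $\mathrm{D}_{\dot y}\dot y=\dot\alpha\,v+\dot\beta\,w=\tau(\dot y)\,\dot y$, namely (\ref{dyy}). Conversely, any solution of (\ref{dyy}) with $\dot y\in U$ has $\alpha=\zeta(\dot y)\ne0$, and setting $q=1/\alpha$, $p=-\beta/\alpha^2$ lifts it uniquely to a Hamilton trajectory in $\ust$. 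Since the Legendre transformation identifies Eu\-ler-La\-grange trajectories of $L$ with Hamilton trajectories of $H$, the theorem follows. The main obstacle is nothing conceptual but merely careful bookkeeping of sign conventions (Legendre, the choice $\iota_{X_H}\sym=-dH$, and the action of $d$ on the coframe); the calculation is made clean by the fact that $\mathrm{D}$-par\-al\-lel\-ism of $\zeta,\eta$ combines the terms $p\,d\zeta+q\,d\eta$ in $\sym$ into a single $\tau\wedge\kappa$ contribution.
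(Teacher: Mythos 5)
Your proposal is correct and follows essentially the same route as the paper's proof: identify $L$ and $H$ fibrewise in the frame $v,w$ and dual coframe $\zeta,\eta$, write down the explicit fractional-linear Legendre transformation (your $(\alpha,\beta)\mapsto(-\beta/\alpha^2,1/\alpha)$ is the paper's $r=-b/a^2$, $s=1/a$), express $\sym=d\kappa$ using $d\zeta=\tau\wedge\zeta$, $d\eta=\tau\wedge\eta$, and identify the Hamiltonian flow with the flow of (\ref{dyy}); the paper merely organizes the last step in the opposite direction, pushing the flow field of (\ref{dyy}) forward and checking it is $X_H$, instead of solving for $X_H$ and projecting. The only discrepancy is your convention $\iota_{X_H}\sym=-\hh dH$ versus the paper's $\sym(X_H,\,\cdot\,)=dH$, which is immaterial: your choice matches the coordinate form of Hamilton's equations in Appendix C, and the solution set of (\ref{dyy}) is invariant under parameter reversal -- the same observation the paper uses to dispose of its $\pm Z$ ambiguity.
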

\begin{proof}Setting $\,P=\tau(v)$, $\,Q=\tau(w)\,$ for the torsion $\,1$-form 
$\,\tau\,$ of $\,\mathrm{D}$, we get $\,\tau=P\zeta+Q\hh\eta$, and so 
(\ref{dxt}) with $\,\nabla=\mathrm{D}$, $\,\xi=\zeta\,$ or $\,\xi=\eta$, and 
$\,\t=\tau\,$ gives $\,d\hs\zeta=Q\hh\eta\wedge\zeta$, 
$\,d\eta=P\zeta\wedge\eta$. Let us identify $\,\tab\,$ (and $\,\tb$) with 
$\,\bs\times\rto$ with the aid of the dif\-feo\-mor\-phism 
$\,\bs\times\rto\nh\to\tab\,$ (or, $\,\bs\times\rto\nh\to\tb$) that sends 
$\,(y,r,s)\,$ to $\,(y,r\hh\zeta_y+s\hh\eta_y)\,$ (or, $\,(y,a,b)\,$ to 
$\,(y,a\hh v_y+b\hh w_y)$). We use the same symbols for differential forms 
(including functions) on the factor manifolds $\,\bs\,$ and 
$\,\rto$ as for their pull\-backs to $\,\tab=\bs\times\rto$ or 
$\,\tb=\bs\times\rto\nnh$. For instance, $\,P,Q,\zeta,\eta\,$ and 
$\,\zeta\wedge\eta\,$ also stand for the pull\-backs of these functions/forms 
from $\,\bs\,$ to $\,\bs\times\rto\nnh$. Similarly, the vector 
fields $\,v,w\,$ on $\,\bs\,$ are also treated as vector fields on 
$\,\bs\times\rto\nnh$, tangent to the $\,\bs\,$ factor. To avoid confusion, we 
will refrain from viewing $\,\zeta,\eta\,$ (or $\,v,w$) as functions on 
$\,\tb\,$ (or $\,\tab$), linear on the fibres, and instead denote those 
functions by $\,r,s\,$ (or, respectively, $\,a,b$), which is consistent with 
our convention, since it means nothing else than treating the coordinate 
functions $\,r,s\,$ or $\,a,b\,$ on the $\,\rto$ factor as functions on 
$\,\bs\times\rto\nnh$.

The vector field $\,Z=av+bw+(aP\nh+b\hs Q)\hs\mathbf{x}\,$ on 
$\,\tb$, with $\,\mathbf{x}\,$ denoting the radial vector field, generates the 
flow of equation (\ref{dyy}) imposed on curves $\,t\mapsto y(t)\in\bs$. In fact, 
writing $\,\dot y=av+bw$, where $\,a,b\,$ are functions of $\,t$, we have 
$\,\tau(\dot y)=aP\nh+b\hs Q$, and so (\ref{dyy}) amounts to 
$\,\dot a=(aP\nh+b\hs Q)\hh a\,$ and $\,\dot b=(aP\nh+b\hs Q)\hh b\,$ (with 
$\,\dot y=av+bw$).

Next, in view of Remark~\ref{frcli}, $\,L=b/a$, $\,\hs H=r/s$, the Le\-gendre 
transformation $\,\,U\nnh\to\ust$ sends $\,(y,a,b)\,$ to $\,(y,r,s)\,$ with 
$\,r=-\hs b/a^2$ and $\,s=1/a$, while $\,L,H\,$ are both nonsingular and 
$\,L\leftrightarrow H$. (Note that $\,d_{\mathbf{x}}L=0\,$ and 
$\,d_{\mathbf{x}}H=0\,$ due to homogeneity of $\,L\,$ and $\,H$.) 
Consequently, the Le\-gendre transformation pushes the radial vector field 
$\,\mathbf{x}\,$ in $\,\tb\,$ and the functions $\,a,b,aP\nh+b\hs Q\,$ forward 
onto $\,-\hs\mathbf{x}\,$ in $\,\tab\,$ and the functions 
$\,1/s,-\hs r/s^2\nnh,\hs\varphi/s^2\nnh$, with $\,\varphi=sP-r\hh Q$. Hence 
it pushes the vector field $\,-Z\,$ (for $\,Z\,$ generating the 
flow of (\ref{dyy})) forward onto $\,s^{-2}(\varphi\hs\mathbf{x}\hh-sv+rw)\,$ 
in $\,\tab$. However, $\,s^{-2}(\varphi\hs\mathbf{x}\hh-sv+rw)\,$ equals 
$\,X\hskip-1.7pt_H$, the unique vector field with 
$\,\sym(X\hskip-1.7pt_H,\,\cdot\,)=\hh dH$. Namely, 
$\,dH=d\hh(r/s)=s^{-2}(s\hs dr-r\hs ds)$, while the canonical $\,1$-form 
$\,\kappa\,$ on $\,\tab=\bs\times\rto$ can be expressed as 
$\,\kappa=r\hh\zeta+s\hh\eta$, and so the symplectic form 
$\,\sym=d\hh\kappa\,$ is given by 
$\,\sym=\varphi\,\zeta\wedge\eta-\zeta\wedge dr-\eta\wedge ds$, with 
$\,\varphi=sP-r\hh Q$. Due to the invariance of the solutions of (\ref{dyy}) 
under the parameter reversal, the distinction between $\,-Z\,$ and $\,Z\,$ 
is of no significance, and our assertion follows.
\end{proof}

\end{document}